\numberwithin{equation}{section}
\newfont{\cyr}{wncyr10 scaled 1100}
\newfont{\cyrr}{wncyr9 scaled 1000}
\theoremstyle{plain}
\newtheorem{theorem}{Theorem}[section]
\newtheorem{proposition}[theorem]{Proposition}
\newtheorem{lemma}[theorem]{Lemma}
\theoremstyle{definition}
\newtheorem{definition}[theorem]{Definition}
\newtheorem{assumption}[theorem]{Assumption}
\theoremstyle{remark}
\newtheorem{remark}[theorem]{Remark}
\newcommand{\Q}{\mathds Q}
\newcommand{\Z}{\mathds Z}
\newcommand{\R}{\mathds R}
\newcommand{\C}{\mathds C}
\DeclareMathOperator{\Aut}{Aut}
\DeclareMathOperator{\Frob}{Frob}
\DeclareMathOperator{\Hom}{Hom}
\DeclareMathOperator{\Gal}{Gal}
\DeclareMathOperator{\GL}{GL}
\DeclareMathOperator{\SL}{SL}
\DeclareMathOperator{\im}{im}
\newcommand{\res}{\mathrm{res}}
\newcommand{\cores}{\mathrm{cores}}
\newcommand{\Sha}{\mbox{\cyr{X}}}
\definecolor{Green}{rgb}{0.1,0.9,0.2}
\newcommand{\longmono}{\mbox{\;$\lhook\joinrel\longrightarrow$\;}}
\newfont{\gotip}{eufb10 at 12pt}
\newcommand{\cO}{{\mathcal O}}
\newcommand{\p}{\mathfrak{p}}
\begin{document}

\title[On the IMC for generalized Heegner classes in a quaternionic setting]{On the Iwasawa main conjecture for generalized Heegner classes in a quaternionic setting}
\author{Maria Rosaria Pati}

\thanks{The author gratefully acknowledges financial support by Projet KUPSUP RIN Emergent 2022 Region Normandie.}

\begin{abstract}
We prove one divisibility relation of the anticyclotomic Iwasawa Main Conjecture for a higher weight ordinary modular form $f$ and an imaginary quadratic field satisfying a  ``relaxed'' Heegner hypothesis. Let $\Lambda$ be the anticyclotomic Iwasawa algebra. Following the approach of Howard and Longo--Vigni, we construct the $\Lambda$-adic Kolyvagin system of generalized Heegner classes coming from Heegner points on a suitable Shimura curve. As its application, we also prove one divisibility relation in the Iwasawa-Greenberg main conjecture for the $p$-adic $L$-function defined by Magrone.
\end{abstract}

\address{Laboratoire de Math\'ematiques Nicolas Oresme, CNRS UMR 6139,
Universit\'e de Caen - Normandie   BP 5186,
14032 Caen Cedex,
France}
\email{maria-rosaria.pati@unicaen.fr}

\subjclass[2010]{11F11 (primary), 14C25 (secondary)}

\keywords{Modular forms, Shimura curves, Heegner cycles, Selmer groups}

\maketitle

\tableofcontents

\section{Introduction}
The study of the Iwasawa theory of Galois representations attached to modular forms goes back to Perrin-Riou, who in 1987 formulated an anticyclotomic Iwasawa Main Conjecture for elliptic curves. More precisely, given an \emph{elliptic curve} $E$ over $\Q$ of conductor $N$, a \emph{prime} $p$ of good ordinary reduction for $E$ and an \emph{imaginary quadratic field} $K$ satisfying the Heegner hypothesis relative to $N$ (i.e. all the primes dividing $N$ are split in $K$), she formulated a conjecture on the structure of the $p^\infty$-Selmer group of $E$ over $K_\infty$ as a module over the Iwasawa algebra $\Lambda=\Z_p\llbracket \Gal(K_\infty/K)\rrbracket$. Perrin-Riou's conjecture was proved under mild hypothesis in 2020 by Burungale-Castella-Kim \cite{BCK}.

As one can see from the emphasized words, there are various directions of generalizations of Perrin-Riou's conjecture. For example, one can consider settings in which $p$ is not ordinary and/or the Galois representation attached to $E$ is replaced by the Galois representation attached to a higher weight modular form and/or $K$ does not satisfy the Heegner hypothesis. 

In the present work, we consider a setting in which the imaginary quadratic field $K$ does not satisfy the Hegneer hypothesis, but its ``relaxed'' version. More precisely, let $f$ be a normalized newform of even weight $k\geq 4$ and level $\Gamma_0(N)$, for $N\geq 3$ an integer. Fix a prime $p$ such that $p\nmid N$ and an imaginary quadratic field $K$ such that $(\mathrm{disc}(K),Np)=1$, $p$ is split in $K$, and $K$ satisfies a relaxed Heegner hypothesis relative to $N$. This means that if $N=N^+N^-$ is the factorization of $N$ in which the primes dividing $N^+$ (resp. $N^-$) are split (resp. inert) in $K$, then $N^-$ is a square-free product of an even number of primes. The relaxed setting we consider leads us to work with Heegner objects over a suitable Shimura curve (instead of the modular curve $X_0(N)$).

Fix an embedding $\iota_p\colon \bar \Q\hookrightarrow \bar\Q_p$, and let $\p$ be the prime above $p$ induced by $\iota_p$ in the ring of integers $\cO$ of the number field $\Q_f$ generated by the Fourier coefficients of $f$. We define $\Lambda=\cO_\p\llbracket \Gamma\rrbracket$ the Iwasawa algebra of $\Gamma=\Gal(K_\infty/K)$ over the localization $\cO_\p$ of $\cO$ at $\p$. Let $V=V_{f,\p}(k/2)$ be the self dual twist of the $\p$-adic Galois representation attached to $f$ by Deligne, $T\subseteq V$ the $G_\Q$-stable lattice inside $V$ as defined in \cite{Nek}, and $A=V/T$. The $p$-Selmer group of $E$ over $K_n$ in the higher weight setting is replaced by the Bloch-Kato Selmer group $H^1_\mathrm{BK}(K_n,A)$. The anticyclotomic Iwasawa theory in our setting is the study of the structure as a $\Lambda$-module of the finitely generated compact $\Lambda$-module $\mathcal{X}_\infty:=(\varinjlim_n H^1_\mathrm{BK}(K_n,A))^\vee$, where the superscript $\vee$ denotes as usual Pontryagin dual. Using Bloch-Kato Selmer groups we can define another module over $\Lambda$, the pro-$p$ Bloch-Kato Selmer group of $f$ over $K_\infty$ as $\hat H^1_\mathrm{BK}(K_\infty,T):=\varprojlim_n H^1(K_n,T)$ with respect to the corestriction. It is a finitely generated $\Lambda$-module which contains the element $\kappa_\infty$ coming from the compatibility along the anticyclotomic extensions of $K$ of generalized Heegner classes over a suitable Shimura curve. 
This element $\kappa_\infty$ is expected to be non-torsion, but a proof of this claim is not yet available (it would follow from \cite[Theorem 4.6]{Magrone} whose proof is only announced). In addition to the assumptions we made above on $f$, $p$ and $K$, we also assume that:
\begin{enumerate}
\item $p\nmid 2N h_K d_f$ where $h_K$ denotes the class number of $K$ and $d_f$ is the discriminant of $\Q_f$;\\
\item $f$ is ordinary at $p$, i.e. the $p$-th Fourier coefficient $a_p$ of $f$ is a $p$-adic unit;\\
\item the $p$-adic representation $\rho_{f,\p}\colon G_\Q\rightarrow \Aut(T)\simeq \GL_2(\cO_\p)$ attached to $f$ has ``big image", i.e.
\[ \mathrm{im}(\rho_{f,\p})\supseteq \{ g\in \GL_2(\Z_p)\ :\ \det(g)\in(\Z_p^\times)^{k-1}\}. \]
\end{enumerate}
In this setting we prove the following:

\begin{theorem}\label{main}
If $\kappa_\infty$ is not torsion, then there exists a finitely generated torsion $\Lambda$-module $M_\infty$ such that
\begin{enumerate}
\item $\mathcal{X}_\infty\sim \Lambda \oplus M_\infty \oplus M_\infty$,\\
\item $\mathrm{char}(M_\infty)=\mathrm{char}(M_\infty)^\iota$ and $\mathrm{char}(M_\infty)\supseteq \mathrm{char}(\hat H^1_\mathrm{BK}(K_\infty,T)/\Lambda \kappa_\infty)$
\end{enumerate}
where $\sim$ means pseudo-isomorphism and $\iota\colon \Lambda\rightarrow \Lambda$ is the involution induced by the inversion on $\Gal(K_\infty/K)$.
\end{theorem}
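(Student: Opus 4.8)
The plan is to follow the now-standard Euler/Kolyvagin-system strategy in its $\Lambda$-adic incarnation, as developed by Howard and adapted by Longo--Vigni, with the generalized Heegner classes on the Shimura curve attached to the factorization $N=N^+N^-$ playing the role of the Heegner points. The starting point is the $\Lambda$-adic Kolyvagin system $\{\kappa(\n)\}_\n$ of generalized Heegner classes constructed earlier in the paper, whose bottom class is $\kappa_\infty\in\hat H^1_{\mathrm{BK}}(K_\infty,T)$. First I would record the structural input: under the big-image hypothesis (3), $T$ (equivalently the residual representation $\bar\rho$) satisfies the cohomological hypotheses needed for Mazur--Rubin/Howard's theory --- irreducibility, existence of enough Kolyvagin primes, vanishing of the relevant $H^0$ and $H^2$ terms, and the fact that the ``core rank'' of the dual Selmer structure over $\Lambda$ is $1$. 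This last point is what forces the shape $\Lambda\oplus M_\infty\oplus M_\infty$: one free part of rank one coming from the Kolyvagin class, and the torsion part appearing with even multiplicity because the Selmer structure is self-dual.

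Next I would invoke Howard's $\Lambda$-adic Kolyvagin-system machinery (the ``theory of Kolyvagin systems over $\Lambda$''): given a nonzero $\Lambda$-adic Kolyvagin system for a self-dual Selmer structure of core rank one, one obtains (i) that $\hat H^1_{\mathrm{BK}}(K_\infty,T)$ has $\Lambda$-rank exactly one, with $\kappa_\infty$ generating a rank-one free submodule once $\kappa_\infty$ is assumed non-torsion; (ii) a pseudo-isomorphism $\mathcal{X}_\infty\sim\Lambda\oplus M_\infty\oplus M_\infty$ for some finitely generated torsion $\Lambda$-module $M_\infty$; and (iii) the divisibility $\mathrm{char}(M_\infty)\supseteq\mathrm{char}\bigl(\hat H^1_{\mathrm{BK}}(K_\infty,T)/\Lambda\kappa_\infty\bigr)$, obtained by bounding the dual Selmer group by the Kolyvagin classes $\kappa(\n)$ for varying squarefree $\n$ built from Kolyvagin primes, exactly as in the derived-class argument. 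The self-duality of the local conditions at $p$ (the ordinary/Greenberg condition, using hypothesis (2)) and away from $p$, together with global duality (Poitou--Tate), gives the functional-equation symmetry $\mathrm{char}(M_\infty)=\mathrm{char}(M_\infty)^\iota$.

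The technical heart --- and the step I expect to be the main obstacle --- is verifying that the Selmer structure cut out by the generalized Heegner classes genuinely matches the Bloch--Kato Selmer structure defining $\mathcal{X}_\infty$ at \emph{all} places, including the primes dividing $N^-$ (where the curve is a genuine Shimura curve, not a modular curve) and the primes dividing $N^+$, and that the local conditions interpolate correctly in the $\Lambda$-adic family. Concretely: at $p$ one must check that the classes land in the Greenberg local condition and that, under ordinarity, this agrees with the Bloch--Kato condition on $A=V/T$; at the bad primes $\ell\mid N^-$ one must analyze the image of the local cohomology and reconcile the ``relaxed'' local condition with the finite/Bloch--Kato one, controlling the discrepancy by a finite index that is prime to $p$ thanks to hypothesis (1) ($p\nmid 2Nh_Kd_f$); and one must check the freeness/no-small-torsion statements that let the abstract Kolyvagin-system theorem apply with the auxiliary Tamagawa-type factors being units. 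Once these local compatibilities are in place, assertions (1) and (2) follow formally from the $\Lambda$-adic Kolyvagin-system formalism together with the hypothesis that $\kappa_\infty$ is not torsion.
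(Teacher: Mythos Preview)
Your approach is essentially the same as the paper's: construct the $\Lambda$-adic Kolyvagin system of generalized Heegner classes, feed it into Howard's structure theorem (Theorem~\ref{structure} in the paper), and transfer the conclusion to $\mathcal{X}_\infty$ via the comparison $\hat H^1_{\mathrm{BK}}(K_\infty,T)\simeq H^1_{\mathcal{F}_\Lambda}(K,\mathbf{T})$ and $\mathcal{X}_\infty\sim H^1_{\mathcal{F}_\Lambda}(K,\mathbf{A})^\vee$. The paper's proof of the main theorem is literally a one-line invocation of these three ingredients.

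Where your emphasis diverges from the paper is in what you flag as the ``technical heart.'' You anticipate the main obstacle to be reconciling the local conditions at primes dividing $N^-$ (and $N^+$) with the Bloch--Kato structure. In the paper this is a non-issue: the comparison of Bloch--Kato and Greenberg Selmer groups is imported wholesale from \cite{Howard-Var} and \cite{LV-Kyoto} (see \S3.4), and nothing special happens at primes dividing $N^-$ --- the local conditions there are just the unramified ones on both sides, and the control theorems go through as in the split case. The genuinely new work in the quaternionic setting, which you treat as already given, is the construction of the Kolyvagin system itself: establishing the norm relations for the Shimura-curve Heegner classes (Lemma~\ref{relations}, using \cite{Magrone}), building the $\Lambda$-compatible family $\beta[n]$ (Proposition~\ref{family}), and verifying the finite--singular compatibility at Kolyvagin primes (equation~\eqref{Koly}, quoted from \cite{Elias-deVeraPiq}). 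So your outline is correct, but the difficulty you anticipate is not where the paper spends its effort.
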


The strategy of the proof is the same as that followed by Longo and Vigni in \cite{LV-Kyoto}, where they proved the same result under the standard Heegner hypothesis, i.e. $N^-=1$, and hence with the element $\kappa_\infty$ constructed using generalized Heegner cycles over the modular curve $X_0(N)$. It is based on the Kolyvagin system machinery of Mazur and Rubin (see \cite{Mazur-Rubin}) as adapted by Howard in \cite{Howard} to modules over an Iwasawa algebra to prove the analogous of our result in the elliptic curve setting. 
The main ingredient of our proof is then the construction of a $\Lambda$-adic Kolyvagin system of generalized Heegner classes given by Abel-Jacobi images of cycles supported on fibers over a suitable Shimura curve. 

Along the years many results on Selmer groups and Shafarevich-Tate groups appeared in literature. If one is interested on the study of these arithmetic objects over the fixed imaginary quadratic field $K$, we could mention the works of Nekov\'{a}\v{r} and Besser, \cite{Nek} and \cite{Besser}, in the classical Heegner setting, and of Elias-de Vera-Piquero \cite{Elias-deVeraPiq} in the quaternionic setting. The present paper deals with the study of Selmer groups along the anticyclotomic tower. It is more in line with the cited works of Howard, Longo--Vigni, and Burungale-Castella-Kim.
In view of the relation between the higher weight specializations of big Heegner points and generalized Heegner cycles in the quaternionic setting (see \cite[Theorem 11.1]{LMW}), our result has also been obtained in \cite[Theorem B (ii)]{Fouquet} under some slightly different hypothesis. Note that, as for Theorem \ref{main}, the result in \textit{loc. cit.} is conditional to the assumption that the higher weight specialization of a certain $\Lambda$-adic big Heegner point is non-torsion.

As an application of Theorem \ref{main}, we prove one inclusion in the Iwasawa-Greenberg main conjecture for the $p$-adic $L$-function defined in \cite{Magrone}. It is well known in fact that there exists an equivalent formulation of the anticyclotomic Iwasawa main conjecture in terms of $p$-adic $L$-functions. In \cite{Magrone}, Magrone generalizes to the quaternionic setting the results of \cite{CH}. For a fixed anticyclotomic Hecke character $\psi$ of infinity type $(k/2,-k/2)$, she defines a $p$-adic $L$-function $\mathscr{L}_{\p,\psi}(f)$ in the Iwasawa algebra $\Lambda^\mathrm{ur}$ of $\Gal(K_\infty/K)$ over $\hat\cO^\mathrm{ur}$, the ring of integers of the completion of the maximal unramified extension of $\Q_{f,\p}$, satisfying an explicit reciprocity law which relates $\mathscr{L}_{\p,\psi}(f)$ to the image under a Perrin-Riou big logarithmic map of the bottom class of our $\Lambda$-adic Kolyvagin system.

Denote by $\hat\psi$ the $p$-adic avatar of $\psi$, and assume that $\hat\psi$ factors through $\Gal(K_\infty/K)$. Let $H^1_\mathrm{BK}(K_\infty,A(\hat\psi^{-1}))=\varinjlim_n H^1_\mathrm{BK}(K_n,A(\hat\psi^{-1}))$ be the Bloch-Kato Selmer group attached to the twist of $A$ by $\hat\psi^{-1}$. With the same hypothesis as in Theorem \ref{main} we prove the following

\begin{theorem}
The $\Lambda$-module $X_\psi:=(H^1_\mathrm{BK}(K_\infty,A(\hat\psi^{-1})))^\vee$ is torsion, and the following inclusion of ideals of $\Lambda^\mathrm{ur}$ holds:
\[ (\mathscr{L}_{\p,\psi}(f))^2\subseteq \mathrm{char}(X_\psi)\otimes \Lambda^\mathrm{ur} .\]
\end{theorem}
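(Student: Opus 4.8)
The plan is to obtain this from Theorem~\ref{main} through the standard dictionary relating the ``Selmer'' and the ``$p$-adic $L$-function'' formulations of the anticyclotomic main conjecture, the bridge being the explicit reciprocity law of \cite{Magrone}; the argument runs parallel to \cite{CH}, where it is carried out under the classical Heegner hypothesis, the Shimura-curve $\Lambda$-adic Kolyvagin system underlying Theorem~\ref{main} now playing the role of the modular-curve one. Recall that the reciprocity law identifies $\mathscr{L}_{\p,\psi}(f)$, up to a unit, with the image of $\mathrm{loc}_\p(\kappa_\infty)$ under a Perrin--Riou big logarithm $\mathcal{L}_{\p,\psi}$, defined on a rank-one piece of the local Iwasawa cohomology of $T$ at $\p$ and having pseudo-null kernel and cokernel up to the explicitly known interpolation factor. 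One first observes that the asserted inclusion is vacuous when $\mathscr{L}_{\p,\psi}(f)=0$, once $X_\psi$ is known to be torsion, whereas when $\mathscr{L}_{\p,\psi}(f)\neq 0$ the reciprocity law forces $\mathrm{loc}_\p(\kappa_\infty)\neq 0$, so $\kappa_\infty$ is automatically non-torsion; in all cases we argue under the standing hypothesis of Theorem~\ref{main} that $\kappa_\infty$ is not torsion.

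First I would recast everything $\Lambda$-adically. Writing $\mathbf{T}=T\otimes_{\cO_\p}\Lambda$, with $G_K$ acting via $\rho_{f,\p}$ twisted by the inverse of the tautological character $G_K\twoheadrightarrow\Gamma\hookrightarrow\Lambda^\times$, and $\mathbf{A}=\mathbf{T}\otimes_\Lambda\Lambda^\vee$ with $\Lambda^\vee$ the Pontryagin dual of $\Lambda$, Shapiro's lemma identifies $\hat H^1_\mathrm{BK}(K_\infty,T)$ with $H^1_{\cF}(K,\mathbf{T})$ and $\mathcal{X}_\infty$ with the Pontryagin dual of $H^1_{\cF^\ast}(K,\mathbf{A})$, where $\cF,\cF^\ast$ are the Greenberg--Bloch--Kato Selmer structures built from the ordinary filtration $0\to V^+\to V\to V^-\to 0$ of $V|_{G_{\Q_p}}$ provided by hypothesis~(2). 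Since $\hat\psi$ factors through $\Gamma$, the twisting isomorphism $t_\psi\colon\Lambda\otimes_{\cO_\p}\hat\cO^{\mathrm{ur}}\xrightarrow{\ \sim\ }\Lambda^{\mathrm{ur}}$, $\gamma\mapsto\hat\psi(\gamma)\gamma$, identifies $X_\psi$ with the Pontryagin dual of the Selmer group of $\mathbf{A}$ that carries the conditions of $\cF^\ast$ outside $p$ and the Bloch--Kato conditions of $A(\hat\psi^{-1})$ at $\p$ and $\bar\p$; because $\psi$ has the balanced infinity type $(k/2,-k/2)$ and $f$ is ordinary, these twisted conditions are of Greenberg/Panchishkin type and differ from those of $\cF^\ast$ precisely at $\p$ (and, via complex conjugation, at $\bar\p$), the discrepancy at $\p$ being measured by $\mathcal{L}_{\p,\psi}$. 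Global (Poitou--Tate) duality for the various Selmer structures in play then links $X_\psi$ to the global quotient $\hat H^1_\mathrm{BK}(K_\infty,T)/\Lambda\kappa_\infty$, to the strict-at-$\p$ Selmer group, and to the cokernel of $\mathrm{loc}_\p$, all up to the $t_\psi$-twist and pseudo-null error.

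From here the argument is a characteristic-ideal bookkeeping. Since $\hat H^1_\mathrm{BK}(K_\infty,T)$ has $\Lambda$-rank one by Theorem~\ref{main}(1) and $\kappa_\infty$ is non-torsion, the global quotient above is $\Lambda$-torsion; running this through the duality sequences (and checking that, after the twist, the corank-one summand of $\mathcal{X}_\infty$ is absorbed by the change of local condition at $\p$) shows that $X_\psi$ is $\Lambda$-torsion, which is the first assertion. For the inclusion of ideals I would combine: (i) Theorem~\ref{main}(2), giving $\mathrm{char}(M_\infty)\supseteq\mathrm{char}\bigl(\hat H^1_\mathrm{BK}(K_\infty,T)/\Lambda\kappa_\infty\bigr)$, together with $\mathrm{char}(M_\infty)=\mathrm{char}(M_\infty)^\iota$; (ii) the reciprocity law and the properties of $\mathcal{L}_{\p,\psi}$, yielding $(\mathscr{L}_{\p,\psi}(f))\subseteq\mathrm{char}\bigl(\hat H^1_\mathrm{BK}(K_\infty,T)/\Lambda\kappa_\infty\bigr)\Lambda^{\mathrm{ur}}$ (one verifies that $\mathrm{loc}_\p(\kappa_\infty)$ lies in the relevant local subspace, that localization at $\p$ is injective on $\hat H^1_\mathrm{BK}(K_\infty,T)$ modulo the torsion strict-at-$\p$ Selmer group, and that the relevant cokernels are $t_\psi$-matched by Poitou--Tate, so that the extra factors introduced by $\mathrm{loc}_\p$ and by $\mathcal{L}_{\p,\psi}$ only enlarge the ideal generated by $\mathscr{L}_{\p,\psi}(f)$); (iii) Theorem~\ref{main}(1) once more, matching $X_\psi$ up to pseudo-null error with the dual of two copies of a $\Lambda$-torsion module of characteristic ideal $\mathrm{char}(M_\infty)$, so that $\mathrm{char}(X_\psi)\Lambda^{\mathrm{ur}}=\mathrm{char}(M_\infty)^2\Lambda^{\mathrm{ur}}$. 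Squaring (ii), then applying (i) and (iii), gives $(\mathscr{L}_{\p,\psi}(f))^2\subseteq\mathrm{char}\bigl(\hat H^1_\mathrm{BK}(K_\infty,T)/\Lambda\kappa_\infty\bigr)^2\Lambda^{\mathrm{ur}}\subseteq\mathrm{char}(M_\infty)^2\Lambda^{\mathrm{ur}}=\mathrm{char}(X_\psi)\Lambda^{\mathrm{ur}}$, the exponent $2$ being inherited from the two copies of $M_\infty$ in Theorem~\ref{main}(1).

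I expect the main obstacle to be the careful handling of the local conditions at the primes above $p$: identifying the Bloch--Kato Selmer group of $A(\hat\psi^{-1})$ with the appropriate Greenberg Selmer structure (which uses ordinarity and the precise Hodge--Tate weights of $\hat\psi$, and requires checking that the local terms at $\bar\p$ and away from $p$ contribute only pseudo-null modules), verifying that every error term entering the Poitou--Tate sequences and the Perrin--Riou logarithm is pseudo-null so that characteristic ideals are unaffected, and keeping track of the involution $\iota$ and the twist $t_\psi$ throughout. All of this follows the template of \cite{CH}, the present setting adding only the verification that the Shimura-curve constructions behind Theorem~\ref{main} enjoy the same formal properties as the modular-curve ones.
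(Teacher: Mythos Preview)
Your overall strategy---combine Theorem~\ref{main} with the explicit reciprocity law and Poitou--Tate duality, following the Castella--Hsieh template---is exactly the paper's. However, the characteristic-ideal bookkeeping in steps (ii) and (iii) contains a genuine gap concerning $\mathrm{coker}(\mathrm{loc}_\p)$.

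In (ii) you note that passing from $\kappa_\infty$ to $\mathscr{L}_{\p,\psi}(f)$ via $\mathrm{loc}_\p$ and the Perrin--Riou map introduces extra factors, and you then discard them to get $(\mathscr{L}_{\p,\psi}(f))\subseteq\mathrm{char}\bigl(\hat H^1_\mathrm{BK}(K_\infty,T)/\Lambda\kappa_\infty\bigr)\Lambda^{\mathrm{ur}}$. This inclusion is true but strictly weaker than what is actually needed, namely
\[
(\mathscr{L}_{\p,\psi}(f))\subseteq\mathrm{char}\bigl(\hat H^1_\mathrm{BK}(K_\infty,T)/\Lambda\kappa_\infty\bigr)\cdot\mathrm{char}\bigl(\mathrm{coker}(\mathrm{loc}_\p)\bigr)\,\Lambda^{\mathrm{ur}}.
\]
The weakening would be harmless if (iii) were right, but (iii) is false: $X_\psi$ is \emph{not} pseudo-isomorphic to $M_\infty\oplus M_\infty$. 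Relaxing the local condition at $\p$ (which is what turns $\mathcal{X}_\infty$ into $X_\psi$ after the twist) does more than kill the free rank-one summand; the Poitou--Tate comparison used in the paper (via \cite[Lemma~5.3]{Kob-Ota}) gives
\[
\mathrm{char}(X_\psi)=\mathrm{char}(M_\infty)^2\cdot\mathrm{char}\bigl(\mathrm{coker}(\mathrm{loc}_\p)\bigr)^2,
\]
so in general $\mathrm{char}(X_\psi)$ is properly contained in $\mathrm{char}(M_\infty)^2$. Your chain $(\mathscr{L}_{\p,\psi}(f))^2\subseteq\mathrm{char}(M_\infty)^2=\mathrm{char}(X_\psi)\Lambda^{\mathrm{ur}}$ therefore fails at the last equality, and the two correct inclusions $(\mathscr{L}_{\p,\psi}(f))^2\subseteq\mathrm{char}(M_\infty)^2$ and $\mathrm{char}(X_\psi)\subseteq\mathrm{char}(M_\infty)^2$ point the wrong way to be combined. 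The fix is precisely what the paper does: retain the $\mathrm{coker}(\mathrm{loc}_\p)$ factor on the analytic side, square, apply Theorem~\ref{main}(2), and then recognize that the \emph{same} factor squared sits inside $\mathrm{char}(X_\psi)$. You allude to this matching (``the relevant cokernels are $t_\psi$-matched by Poitou--Tate'') but then override it by asserting the incorrect equality in (iii); that is the step that needs repair.
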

As for Theorem \ref{main}, we should mention that there is a great variety of results in different settings of the Iwasawa-Greenberg main conjecture for $p$-adic $L$-functions attached to modular forms. For example the work of Kobayashi-Ota \cite{Kob-Ota} considers the $p$-adic $L$-function of Bertolini-Darmon-Prasanna in the case in which the modular form $f$ is non-ordinary at the prime $p$.

\textbf{Notation.}
We fix an embedding $\bar \Q\hookrightarrow \C$ and denote by $\tau\in G_\Q=\Gal(\bar\Q/\Q)$ the complex conjugation it induces.\\

\textit{Acknowledgements.} We would like to thank Stefano Vigni for helpful discussions on one of his joint works with Matteo Longo, and the anonymous referee for valuable comments and suggestions on an earlier version of the article.\\

\section{Galois representations}\label{repr}

In this section we will recollect some known facts about Galois representations attached to modular forms with the aim of presenting some objects that we will use later. See \cite{Brooks} or \cite{Magrone} for more details about the contents of this section.

\subsection{Galois representation of a modular form}\label{galois}
Let $f$ be a normalized newform of weight $k$ for $\Gamma_0(N)$ and write $\cO$ for the ring of integers of the number field $\Q_f$ generated by the Fourier coefficients of $f$. Denote by $Y_0(N)$ the modular curve parametrizing elliptic curves with a level-$N$ structure and let $j\colon Y_0(N)\hookrightarrow X_0(N)$ be its proper smooth compactification. Write $\pi\colon \mathscr{E}\rightarrow Y(N)$ for the universal elliptic curve with full level $N$ structure, and let $q\colon Y(N)\rightarrow Y_0(N)$ be the projection. Note that $Y_0(N)=\mathcal{B}\backslash Y(N)$, where $\mathcal{B}$ is the Borel subgroup of $\GL_2(\Z/N\Z)/\{\pm 1\}$. Let $\epsilon_\mathcal{B}=(1/\# \mathcal{B})\sum_{g \in \mathcal{B}}g$ be the idempotent corresponding to the trivial representation of $\mathcal{B}$.

Fix a prime $p$ such that $p\nmid 2N$ and let $\p\subseteq \cO$ be the prime above $p$ determined by $\iota_p$. The $\p$-adic representation of $G_\Q$ attached to $f$ by Deligne can be defined as the maximal subspace of
$$H^1_{\text{\'et}}(X_0(N)\otimes \bar{\Q},j_*(\epsilon_\mathcal{B} q_*\mathrm{Sym}^{k-2}(\mathbf{R}^1\pi_*\Q_p)))\otimes_{\Q_p}\Q_{f,\p}$$
on which the Hecke operators $T_\ell$ for all $\ell\nmid Np$ act as multiplication by the Fourier coefficient $a_\ell(f)$. It is a $2$-dimensional $\Q_{f,\p}$-vector space, that we will denote by $V_{f,\p}$, equipped with a continuous action of $G_\Q$, unramified outside the primes dividing $Np$ and such that the characteristic polynomial of a geometric Frobenius at $\ell\nmid Np$ is the Hecke polynomial
$$X^2-a_\ell(f) X+\ell^{k-1}.$$
As proved by Scholl (\cite{Scholl}) for modular curves parametrizing elliptic curves with a full level structure, and extended by Nekov\'a\v{r} to $\Gamma_0$-types level structures (\cite[\S 3]{Nek} and \cite[\S 2.5]{Nek2}), the Galois representation $V_{f,\p}$ can be obtained as the $\p$-adic realization of a suitable motive attached to $f$. Roughly speaking, this means that $V_{f,\p}$ lives in the $p$-adic \'etale cohomology of a suitable variety. More precisely, let $\bar{\pi}\colon \bar{\mathscr{E}}\rightarrow X(N)$ be the universal generalized elliptic curve with full level $N$ structure over the modular curve $X(N)$ and let $\bar{\mathscr{E}}^{k-2}$ be the $(k-2)$-fold fibre product of $\bar{\mathscr{E}}$ with itself over $X(N)$. If $k\geq 4$ then $\bar{\mathscr{E}}^{k-2}$ is singular. Denote by $\mathcal{W}_{k-2}$ the canonical desingularization of $\bar{\mathscr{E}}^{k-2}$ constructed by Deligne; it is the so-called $(k-1)$-dimensional Kuga-Sato variety over $X(N)$. An extension due to Nekov\'a\v{r} of the main result of Scholl (\cite{Scholl}) says that there is a canonical isomorphism
$$H^1_{\text{\'et}}(X_0(N)\otimes \bar{\Q},j_*(\epsilon_\mathcal{B} q_*\mathrm{Sym}^{k-2}(\mathbf{R}^1\pi_*\Q_p)))\overset{\sim}{\longrightarrow}\epsilon_\mathcal{B}\epsilon_S H^{k-1}_{\text{\'et}}(\mathcal{W}_{k-2}\otimes \bar{\Q},\Q_p),$$
where $\epsilon_S$ is a suitable projector acting on the vector space $H^{k-1}_{\text{\'et}}(\mathcal{W}_{k-2}\otimes \bar{\Q},\Q_p)$.
Then, putting $\epsilon=\epsilon_\mathcal{B}\epsilon_S$ the representation $V_{f,\p}$ is a \emph{subspace} of $\epsilon H^{k-1}_{\text{\'et}}(\mathcal{W}_{k-2}\otimes \bar{\Q},\Q_p)\otimes \Q_{f,\p}$. Poincar\'e duality on $\mathcal{W}_{k-2}$ gives rise to a $G_\Q$-equivariant, non-degenerate, alternating pairing
\begin{equation}\label{poinc on V}
e\colon V_{f,\p}\times V_{f,\p}\longrightarrow \Q_{f,\p}(1-k)
\end{equation}
for which the twist $V:=V_{f,\p}(k/2)$ is self dual, in the sense that $V\simeq V^*(1)$ where $V^*$ is the contragredient representation of $V$. As explained in \cite[\S 2.1.2]{PPV}, the representation $V$ is isomorphic to the self dual twist of $V_{f,\p}^*$ and hence it can be realized as a \emph{quotient} of $\epsilon H^{k-1}_{\text{\'et}}(\mathcal{W}_{k-2}\otimes \bar{\Q},\Q_p(k/2))$. We then have a natural projection
$$\epsilon H^{k-1}_{\text{\'et}}(\mathcal{W}_{k-2}\otimes \bar{\Q},\Q_p(k/2))\otimes \Q_{f,\p}\longrightarrow V$$
which is $G_\Q$-equivariant.

As explained by Nekov\'a\v r, there exists an integral version of $V$. Let $\cO_\p$ be the valuation ring of $\Q_{f,\p}$. By  \cite[Proposition 3.1]{Nek} under the assumption $p\nmid (k-2)!\phi(N)$, there exists a $G_\Q$-stable lattice $T$ in $V$ and a $G_\Q$-equivariant map
$$r_f\colon \epsilon H^{k-1}_{\text{\'et}}(\mathcal{W}_{k-2}\otimes \bar{\Q},\Z_p(k/2))\otimes \cO_\p\longrightarrow T.$$
In a subsequent paper, \cite[\S 6.5]{Nek2}, the author explains why it is possible to remove the assumption $p\nmid (k-2)!\phi(N)$.
Since $e$ is non-degenerate, the lattice dual of $T$, i.e. $\{x\in V\ |\ e(x,T)\subseteq\cO_\p(1)\}$, is isomorphic to $T$ (see \cite[Proposition 3.1]{Nek}), hence $e$ induces a pairing that we continue to denote by $e$
\begin{equation}\label{weil pairing}
e\colon T\times T\longrightarrow \cO_\p(1)=\cO_\p\otimes \Z_p(1)
\end{equation}
which is perfect, alternating and Galois equivariant (note that $\tau\in G_\Q$ acts on the target as the inverse since $\Z_p(1)$ are roots of unity).

\subsection{Galois representation of a modular form in a quaternionic setting}\label{Shimura}
In what follows we will consider a factorization of $N$ as $N=N^+ N^-$ such that $N^-$ is a square-free product of an even number of primes.

Let $B$ be the (indefinite) quaternion algebra over $\Q$ with discriminant $N^-$. Fix a maximal order $\cO_B$ in $B$ and Eichler order $\mathcal{R}\subseteq \cO_B$ of level $N^+$ (they are unique up to conjugation by elements of $B^\times$). Associated to these data is a Shimura curve $X_{N^+}/\Q$, which is the coarse moduli scheme classifying abelian surfaces with quaternionic multiplication by $\cO_B$ and a level $N^+$-structure (see \cite[\S 2.1]{Elias-deVeraPiq} for more details).

Since $B$ is split at infinity, we can fix an isomorphism $\iota_\infty \colon B\otimes \R\rightarrow M_2(\R)$. Over the complex numbers, $X_{N^+}^{an}(\C)$ is identified with the compact Riemann surface $\Gamma_{0,N^+}\backslash \mathcal{H}$ as complex algebraic curves, where $\Gamma_{0,N^+}\subseteq \SL_2(\R)$ is the image under $\iota_\infty$ of the group $\mathcal{R}_1^\times$ of units of reduced norm $1$ in the Eichler order $\mathcal{R}$.

Let $M_k(X_{N^+},\C)$ be the space of modular forms on $X_{N^+}$ (see \cite[\S 3.1]{Elias-deVeraPiq}). The Shimura curve $X_{N^+}$ comes equipped with a ring of Hecke correspondences, which induces an action of Hecke operators on $M_k(X_{N^+},\C)$.


 The Jacquet-Langlands correspondence is a canonical (up to scaling) isomorphism
$$S_k(\Gamma_0(N),\C)^{N^-\mathrm{-new}}\overset{\simeq}{\longrightarrow} M_k(X_{N^+},\C)$$
which is compatible with the action of the Hecke operators and Atkin-Lehner involutions.

Hence, in particular, to each eigenform $f\in S_k(\Gamma_0(N))^{N^-\mathrm{-new}}$ there corresponds a unique quaternionic form in $M_k(X_{N^+})$ having the same Hecke eigenvalues as $f$ for the good Hecke operators $T_\ell$ ($\ell\nmid N$) and the Atkin-Lehner involutions.

Let $f_B\in M_k(X_{N^+},\C)$ be the modular form on $X_{N^+}$ corresponding to the normalized newform $f$ of weight $k$ for $\Gamma_0(N)$ fixed in the previous section. A construction similar to that of $V_{f,\p}$ for $f$ can be done for $f_B$ using the universal abelian surface $\mathcal{A}$ over an auxiliary Shimura curve which is a fine moduli scheme projecting onto $X_{N^+}$.


We will denote by $X_{1,N^+}$ the Shimura curve of level $V_1(N^+)$ (see for example \cite[\S 3]{Kassaei}). It is the fine moduli scheme over $\Z[1/N]$ which represents the moduli problem:
$$S\ \mapsto \{\text{isomorphism classes of abelian surfaces with QM and level}\ V_1(N^+)\ \text{structure}\}.$$ 

Since $B$ is split at the primes dividing $N^+$, and $\cO_B$ is a maximal order, there is an identification $\eta\colon \cO_B\otimes \Z/N^+\Z\rightarrow M_2(\Z/N^+\Z)$ such that the Eichler order $\mathcal{R}$ consists of those elements $x$ in $\cO_B$ for which $\eta(x)$ is a upper triangular matrix. The group $\mathcal{R}_1^\times$ admits a canonical map to $(\Z/N^+\Z)^\times$ sending $x$ to $d$, where $d$ is the bottom right entry in $\eta(x)=\begin{pmatrix} a & b\\ 0 & d\end{pmatrix}$. Let $\Gamma_{1,N^+}$ be the image under $\iota_\infty$ of the kernel of this map. 
The complex points of the complex algebraic curve $X_{1,N^+}^{an}$ are naturally identified with the compact Riemann surface $\Gamma_{1,N^+}\backslash\mathcal{H}$.

Let $\pi\colon\mathcal{A}\rightarrow X_{1,N^+}$ be the universal abelian surface over $X_{1,N^+}$,
 and let $\mathcal{A}^{\frac{k-2}{2}}$ be the $(k-2)/2$-fold fibre product of $\mathcal{A}$ with itself over $X_{1,N^+}$. In analogy to the elliptic case, we refer to it as the $(k-1)$-dimensional Kuga-Sato variety over the Shimura curve $X_{1,N^+}$. Note that, since $X_{1,N^+}$ has no cusps, the product variety $\mathcal{A}^{\frac{k-2}{2}}$ does not require any desingularization.
The group $\mathcal{B}=\Gamma_{0,N^+}/\Gamma_{1,N^+}\simeq (\Z/N^+\Z)^\times$ acts canonically as $X_{N^+}$-automorphisms on $\mathcal{A}^{\frac{k-2}{2}}$, and hence we can define the projector $\epsilon_\mathcal{B}$ exactly as in the previous section. The $\p$-adic representation $V_{f_B,\p}$ of $G_\Q$ attached to $f_B$ is the two-dimensional vector space given by the maximal subspace of
$$\epsilon_\mathcal{B}\epsilon_B H_{\text{\'et}}^{k-1}(\mathcal{A}^{\frac{k-2}{2}}\otimes \bar\Q,\Q_p)\otimes_{\Q_p}\Q_{f,\p}$$
on which the Hecke operators $T_\ell$ for all $\ell\nmid Np$ act as multiplication by $a_\ell(f)$. Here $\epsilon_B$ is a suitable projector defined for example in \cite[Theorem 6.1]{Brooks}. 
More generally, $\epsilon_\mathcal{B}\epsilon_B H_{\text{\'et}}^{k-1}(\mathcal{A}^{\frac{k-2}{2}}\otimes \bar\Q,\Q_p)$ is a free Hecke-module of rank $2$ (the arguments are the same as in \cite[\S 5]{IS}).

The Eichler-Shimura relations over Shimura curves imply that the traces of the Frobenii at primes $\ell\nmid Np$ acting on $V_{f_B,\p}$ is given by the action of $T_\ell$ on $f_B$, and hence by multiplication by $a_\ell(f)$. This means that the Frobenii at all primes $\ell\nmid Np$ act in the same way on $V_{f_B,\p}$ and on $V_{f,\p}$. But then, a Chebotarev's density argument implies that
$$V_{f,\p}\simeq V_{f_B,\p}$$
as Galois representations and Hecke-modules. Therefore, putting $\epsilon=\epsilon_\mathcal{B}\epsilon_B$ and tensoring with $\Q_\p(k/2)$ we find the projection map
$$\epsilon H_{\text{\'et}}^{k-1}(\mathcal{A}^{\frac{k-2}{2}}\otimes \bar\Q,\Q_p(k/2))\otimes \Q_{f,\p}\longrightarrow V.$$
For reasons that will appear clear later, we are interested in an integral version of this map. 
Since $p\nmid 2N$, the same construction of $V_{f_B,\p}$ can be done with the constant sheaf $\Q_p$ replaced by $\Z_p$. As in the previous section, at a first stage one should assume that $p\nmid \phi(N^+)$ but in fact, as above, by suitably choosing the distinguished $\cO_\p$-lattice $T_{f_B,\p}$ in $V_{f_B,\p}$ one can drop this assumption. Clearly, one has $T_{f_B,\p}(k/2)\simeq T$ and a $G_\Q$-equivariant map
$$r_{B,f}\colon \epsilon H_{\text{\'et}}^{k-1}(\mathcal{A}^{\frac{k-2}{2}}\otimes \bar\Q,\Z_p(k/2))\otimes \cO_\p\longrightarrow T.$$
In this sense we view the Galois representation $T$ attached to $f$ in a quaternionic setting.

\subsection{Generalized Kuga-Sato varieties fibered over a Shimura curve}\label{imaginary field}
In this section we will define a kind of \emph{generalized} Kuga-Sato variety over the Shimura curve $X_{1,N^+}$. It is the variety on which the algebraic cycles that we will use to construct our Kolyvagin system lie. 

Fix once and for all an imaginary quadratic field $K$ of discriminant coprime to $Np$ in which $p$ splits, and let $\cO_K$ its ring of integers. For simplicity we assume that $\cO_K^\times=\{\pm 1\}$; a condition which is satisfied as soon as $K\neq \Q(i)$ and $K\neq \Q(\sqrt{-3})$. The field $K$ determines a factorization of $N$ as $N=N^+N^-$ where $N^+$ is the product of the primes of $N$ that are split in $K$ and $N^-$ is the product of the primes of $N$ that are inert in $K$.
Throughout the paper we will do the following
\begin{assumption}\label{assumption}
$N^-$ is a square-free product of an even number of primes.
\end{assumption}
With this factorization we can consider the Shimura curve $X_{1,N^+}$ and its universal abelian surface $\pi\colon \mathcal{A}\rightarrow X_{1,N^+}$ as defined in sec. \ref{Shimura}.

Fix once and for all an abelian surface $A$ with quaternionic multiplication (by $\cO_B$), level $V_1(N^+)$-structure and complex multiplication by $\cO_K$, defined over the Hilbert class field $H$ of $K$. Since $p$ splits in $K$, the surface $A$ is ordinary at $p$, which means that it has good ordinary reduction at $\p$ and the $p$-divisible group of its reduction is isomorphic to the self-product of the $p$-divisible group of an ordinary elliptic curve.

We define our generalized Kuga-Sato variety over $X_{1,N^+}$ as the $(2k-3)$-dimensional variety:
$$Y_k:=\mathcal{A}^{\frac{k-2}{2}}\times A^{\frac{k-2}{2}}.$$

\begin{remark}
When $N^-=1$ the Shimura curve $X_{1,N}=X_1(N)$ is the usual modular curve associated to $\Gamma_1(N)$, $\mathcal{A}$ is the square of the universal generalized elliptic curve with $\Gamma_1(N)$-level structure and $A$ is the square of a fixed elliptic curve with complex multiplication by $\cO_K$. Therefore we get exactly the variety $X_{k-2}$ defined in \cite[\S 2.2]{BDP}.
\end{remark}
We will consider the projector $\epsilon_1\in \text{Corr}_{X_{1,N^+}}(Y_k,Y_k)$ defined in \cite[\S 6.1]{Brooks}, and we will see the projector $\epsilon_\mathcal{B}$ defined in Sec. \ref{Shimura} as an idempotent in the ring $\mathrm{Corr}_{X_{N^+}}(Y_k)$ acting trivially on $A^\frac{k-2}{2}$ (see also \cite[\S 3]{LP} and \cite[Definition 6.4]{Masdeu}). The projector $\epsilon_{Y_k}:=\epsilon_\mathcal{B}\epsilon_1$ has the following effect on the $p$-adic cohomology of $Y_k$ (\cite[Proposition 6.5]{Brooks}):
\begin{equation}\label{cohom}
\epsilon_{Y_k} H^i_{\text{\'et}}(Y_k\otimes \bar\Q,\Z_p)=
\begin{cases}
0 & \text{if}\ i\neq 2k-3\\
\epsilon H^{k-1}_{\text{\'et}}(\mathcal{A}^{\frac{k-2}{2}}\otimes \bar\Q,\Z_p)\otimes \text{Sym}^{k-2}eH^1_{\text{\'et}}(A\otimes\bar\Q,\Z_p) & \text{if}\ i=2k-3
\end{cases}
\end{equation}

\subsection{$p$-adic Abel-Jacobi maps and Heegner classes}\label{class-Paola}
Following \cite[\S 5.5-5.6]{Magrone}, for any field $F$ containing $H$ we consider the $p$-adic Abel-Jacobi map over $Y_k$:
$$\text{AJ}_{p,F}\colon \text{CH}^{k-1}(Y_k/F)_0\longrightarrow H^1(F,H^{2k-3}_{\text{\'et}}(Y_k\otimes\bar\Q,\Z_p(k-1)))$$
where $\text{CH}^{k-1}(Y_k/F)_0$ denotes the Chow group of rational equivalence classes of codimension $k-1$ cycles on $Y_k$ defined over $F$ that are homologically trivial, i.e. belong to the kernel of the cycle class map in $p$-adic cohomology.

Let $c$ be a positive integer prime to $N^+$ and $K[c]$ be the ring class field of $K$ of conductor $c$.
By \eqref{cohom}, the group $\text{CH}^{k-1}(Y_k/K[c])_0$ contains $\epsilon_{Y_k}\text{CH}^{k-1}(Y_k/K[c])$, hence we can compute $\text{AJ}_{p,K[c]}$ on the cycles
$$\Delta_\varphi:=\epsilon_{Y_k}(\text{Graph}(\varphi)^{\frac{k-2}{2}})\in\epsilon_{Y_k}\text{CH}^{k-1}(Y_k/K[c])$$
indexed by isogenies from $A$ to abelian surfaces with $QM$, level $V_1(N^+)$-structure over $K[c]$ and complex multiplication by the order $\cO_c=\Z+c\cO_K$. In fact, if $\varphi\colon A\rightarrow A'$ is an isogeny of abelian surfaces with $QM$ and level $V_1(N^+)$-structure over $K[c]$ we can consider its graph $\text{Graph}(\varphi)\subseteq A\times A'$. Let $x\in X_{1,N^+}(K[c])$ be the $\mathrm{CM}$ point of $X_{1,N^+}$ corresponding to the abelian surface $A'$ with $QM$ and level $V_1(N^+)$-structure. There is an embedding $A'=\mathcal{A}_x\hookrightarrow \mathcal{A}$ and hence
$$\text{Graph}(\varphi)^{\frac{k-2}{2}}\subseteq (A\times A')^{\frac{k-2}{2}}\simeq (A')^{\frac{k-2}{2}}\times A^{\frac{k-2}{2}}\subseteq Y_k.$$
The cycles $\Delta_\varphi$ are called \emph{generalized Heegner cycles} of conductor $c$ over the Shimura curve $X_{N^+}$.

As said above, we can compute $\text{AJ}_{p,K[c]}$ on $\Delta_\varphi$ obtaining a class in $H^1(K[c],\epsilon_{Y_k}H^{2k-3}_{\text{\'et}}(Y_k\otimes\bar\Q,\Z_p(k-1)))$. Finally, by \eqref{cohom}, composing with the map induced by $r_{B,f}$ we define the \emph{generalized Heegner class} attached to $f$ and $\varphi$ as 
$$z_{f,\varphi}:=(r_{B,f}\otimes \mathrm{id})(\text{AJ}_{p,K[c]})(\Delta_\varphi)\in H^1\left(K[c],T\otimes\text{Sym}^{k-2}eH^1_{\text{\'et}}\left(A\otimes\bar\Q,\Z_p\left(\frac{k-2}{2}\right)\right)\right).$$
If in particular $\varphi$ is induced by the multiplication-by-$c$ map on the complex uniformization of $A$, we will write $z_{f,c}$ instead of $z_{f,\varphi}$. We will consider integers $c$ of the form $c_0 p^s$, where $c_0$ is any positive integer such that $(c_0,pN^+)=1$. Finally, as explained in \cite[\S 5.9]{Magrone}, one can define the trivial component of $z_{f,c_o p^s}$, 
\begin{equation}\label{class}
z_{f,1,c_0 p^s}\in H^1(K[c_0 p^s],T),
\end{equation}
as a suitable twist of $z_{f,c_0 p^s}$ (see \cite[(5.5)]{Magrone}).

\section{Selmer groups and the Kolyvagin method}
In this section we recollect some known facts about Selmer groups and the Kolyvagin method to bound them. We will follow mainly \cite{LV-Kyoto} which is the generalization to the higher weight setting of the elliptic curve case covered by Howard in \cite{Howard}. We invite the reader to consult these papers for more details about the contents of this section.

\subsection{Selmer structures}
Let $p$ be a prime number and $R$ a complete, noetherian, local ring with finite residue field of characteristic $p$. Let $K$ be an imaginary quadratic field and $T$ a $R[G_K]$-module whose $G_K$-action is unramified outside a finite set $\Sigma$ of primes of $K$.

A Selmer structure $\mathcal{F}$ on $T$ over $K$ is the datum of a finite set $\Sigma(\mathcal{F})$ of places containing $\Sigma$, all the primes above $p$ and all the archimedean primes, and for each $v\in \Sigma(\mathcal{F})$ the choice of a local condition, that is a subgroup $H^1_\mathcal{F}(K_v,T)$ of $H^1(K_v,T)$.\\
At primes $v\notin \Sigma(\mathcal{F})$ we will write $H^1_\mathcal{F}(K_v,T)$ to denote the unramified or finite condition
$$H^1_\mathcal{F}(K_v,T):=H^1_{\mathrm{unr}}(K_v,T):=\mathrm{ker}(H^1(K_v,T)\longrightarrow H^1(K_v^{\mathrm{unr}},T)).$$

Given a Selmer structure $\mathcal{F}$ on $T$ we define the associated Selmer group $H^1_\mathcal{F}(K,T)\subseteq H^1(K,T)$ as
$$H^1_{\mathcal{F}}(K,T):=\mathrm{ker}\left(H^1(K,T)\longrightarrow \prod_v \frac{H^1(K_v,T)}{H^1_\mathcal{F}(K_v,T)}\right),$$
where the map in parenthesis is induced by the product of all localization maps
$$\mathrm{loc}_v\colon H^1(K,T)\longrightarrow H^1(K_v,T)$$
for every prime $v$ of $K$, and natural projection to quotients. In other words, $H^1_\mathcal{F}(K,T)$ is nothing more than the set of classes in $H^1(K,T)$ whose localization lives in $H^1_\mathcal{F}(K_v,T)$ for every prime $v$ of $K$.

\subsection{Bloch-Kato Selmer groups of a modular form}
The Bloch-Kato Selmer group of a modular form is the Selmer group of the Galois representation attached to the modular form associated to the Selmer structure given by the Bloch-Kato local conditions described below. 

More precisely, let $f$ be a normalized eigenform as in section \ref{galois} and let $V$ be the $2$-dimensional $\Q_{f,\p}$-vector space with a continuous action of $G_K$ defined there. We also recall its $G_K$-stable $\cO_\p$-lattice $T$ and define the quotient $A:=V/T$. 
From now on, we will work under the following
\begin{assumption}
$f$ is ordinary at $p$, i.e. the $p$-th Fourier coefficient $a_p$ is a $p$-adic unit.
\end{assumption}

Let $L$ be any extension of $K$. We will give a Selmer structure $\mathcal{F}$ on $V$ over $L$ and, abusing notation, we will denote by the same symbol $\mathcal{F}$ the Selmer structure on $T$ and $A$ induced in the following way. For a prime $v$ of $L$, we let $H^1_\mathcal{F}(L_v,T)$ (resp. $H^1_\mathcal{F}(L_v,A)$) be the inverse image (resp. the image) of $H^1_\mathcal{F}(L_v,V)$ under the natural map $H^1(L_v,T)\rightarrow H^1(L_v,V)$ (resp. $H^1(L_v,V)\rightarrow H^1(L_v,A)$).

The \emph{Bloch-Kato Selmer group $H^1_{\mathrm{BK}}(L,V)$ of $f$ over $L$} is the Selmer group associated to the following Selmer structure
$$H^1_{\mathrm{BK}}(L_v,V)=
\begin{cases}
\mathrm{ker}(H^1(L_v,V)\rightarrow H^1(L_v,V\otimes_{\Q_p}B_\mathrm{cris})) & \text{if}\ v\mid p,\\
H^1_{\mathrm{unr}}(L_v,V)=\mathrm{ker}(H^1(L_v,V)\rightarrow H^1(L_v^\mathrm{unr},V)) & \text{if}\ v\nmid p
\end{cases}
$$
where $B_{\mathrm{cris}}$ is the Fontaine's ring of crystalline periods.

If $E/L$ is finite extension then restriction and corestriction on cohomology groups induce maps between Bloch-Kato Selmer groups, that is
$$\mathrm{res}_{E/L}\colon H^1_\mathrm{BK}(L,V)\longrightarrow H^1_\mathrm{BK}(E,V),\qquad \mathrm{cores}_{E/L}\colon H^1_\mathrm{BK}(E,V)\longrightarrow H^1_\mathrm{BK}(L,V).$$

\subsubsection{The Bloch-Kato Selmer group of $f$ over the anticyclotomic $\Z_p$-extension of $K$}\label{Zp-ext}

Let $K$ be the imaginary quadratic field fixed in section \ref{imaginary field} and assume in addition that $p\nmid h_K=|\Gal(H/K)|$. Recall that for every integer $n\geq 1$ we denote by $K[n]$ the ring class field of $K$ of conductor $n$. Note that $K[1]$ is the Hilbert class field $H$ of $K$. Since $p$ is unramified in $K$, we also have $p\nmid |\Gal(K[p]/H)|$ (use the class number formula for example); this combined with $p\nmid h_K$ implies that $p$ does not divide the order of $\Gal(K[p]/K)$.

It is well known that $\Gamma_m:=\Gal(K[p^{m+1}]/K[p])\simeq \Z/p^m\Z$ for every $m\geq 0$; hence there exists a subgroup $\Delta$ of $\Gal(K[p^{m+1}]/K)$ isomorphic to $\Gal(K[p]/K)$) such that
$$\Gal(K[p^{m+1}]/K)\simeq \Gamma_m\times \Delta.$$
For every $m\geq 0$ define $K_m$ as the subfield of $K[p^{m+1}]$ that is fixed by $\Delta$, so that
$$\Gal(K_m/K)\simeq \Gamma_m\simeq \Z/p^m\Z.$$
The anticyclotomic $\Z_p$-extension of $K$ is the field $K_\infty:=\cup_{m\geq 0} K_m$. Equivalently, it is the unique $\Z_p$-extension of $K$ which is Galois over $\Q$ and such that the involution in $\Gal(K/\Q)$ acts by conjugation on its Galois group over $K$ as the inversion.

Set
$$\Gamma_\infty:=\varprojlim_m \Gamma_m=\Gal(K_\infty/K)\simeq \Z_p,$$
where the inverse limit is taken with respect to the natural projections $\Gamma_{m+1}\rightarrow\Gamma_m$. Moreover, for every $m\geq 1$ set $\Lambda_m:=\cO_\p[\Gamma_m]$ and define the Iwasawa algebra
$$\Lambda:=\varprojlim_m \Lambda_m=\cO_\p\llbracket\Gamma_\infty\rrbracket.$$
Let $\gamma_\infty=(\gamma_1,\dots,\gamma_m,\dots)$ be a topological generator of $\Gamma_\infty$, that is a coherent sequence in which each element $\gamma_m$ is a generator of $\Gamma_m$ for every $m\geq 1$. 

For a $\cO_\p$-module $M$ we write $M^\vee:=\Hom_{\cO_\p}^{\mathrm{cont}}(M,\Q_{f,\p}/\cO_\p)$ for its Pontryagin dual, equipped with the compact-open topology (here $\Hom^{\mathrm{cont}}_{\cO_\p}$ denotes continuous homomorphisms of $\cO_\p$-modules and $\Q_{f,\p}/\cO_\p$ has the discrete topology). 
If $M$ is also a continuous $\Lambda$-module, then the dual $M^\vee$ inherits a structure of continuous $\Lambda$-module. 

Define the \emph{pro-$p$ Bloch-Kato Selmer group $\hat H^1_\mathrm{BK}(K_\infty,T)$ of $f$ over $K_\infty$} as the compact $\Lambda$-module
$$\hat H^1_\mathrm{BK}(K_\infty,T):=\varprojlim_m H^1_\mathrm{BK}(K_m,T),$$
where the inverse limit is taken with respect to the corestriction maps.

On the other hand, we define the \emph{discrete Bloch-Kato Selmer group $H^1_\mathrm{BK}(K_\infty,A)$} as
$$H^1_\mathrm{BK}(K_\infty,A):=\varinjlim_m H^1_\mathrm{BK}(K_m,A),$$
where the injective limit is taken with respect to the restriction maps. Let
$$\mathcal{X}_\infty:=H^1_\mathrm{BK}(K_\infty,A)^\vee:=\Hom^{\mathrm{cont}}_{\cO_\p}(H^1_\mathrm{BK}(K_\infty,A),\Q_{f,\p}/\cO_\p)$$
be its Pontryagin dual. As proved in \cite[Proposition 2.5]{LV-Kyoto} the $\Lambda$-module $\mathcal{X}_\infty$ is finitely generated. Our main result describes its structure.

\subsection{Greenberg's Selmer groups of a modular form}
As the Bloch-Kato Selmer group, the Greenberg's Selmer group of a modular form is the Selmer group of the Galois representation attached to the modular form associated to the Selmer structure given by the Greenberg local conditions that we now describe. Let $L$ be any extension of $K$. We will give a Selmer structure on $V$. 

Since we are assuming that $f$ is ordinary at $p$, for every prime $v$ of $L$ above $p$ the restriction of $V$ to $G_{L_v}$ admits a filtration of $G_{L_v}$-modules
$$0\longrightarrow \mathrm{Fil}_v^+(V)\longrightarrow V \longrightarrow \mathrm{Fil}_v^-(V) \longrightarrow 0,$$
where $\mathrm{Fil}_v^{\pm}(V)$ are both one-dimensional over $\Q_{f,\p}$ and the inertia subgroup acts on $\mathrm{Fil}_v^-(V)$ via $\chi_\mathrm{cyc}^{-(k-2)/2}$. 

The \emph{Greenberg Selmer group $H^1_\mathrm{Gr}(L,V)$ of $f$ over $L$} is the Selmer group associated to the following Selmer structure:
$$H^1_\mathrm{Gr}(L_v,V)=
\begin{cases}
H^1_\mathrm{ord}(L_v,V):=\ker(H^1(L_v,V)\rightarrow H^1(L_v,\mathrm{Fil}_v^-(V))) & \text{if}\ v\mid p\\
H^1_\mathrm{unr}(L_v,V) & \text{if}\ v\nmid p
\end{cases}$$
Note that 
$$H^1_\mathrm{ord}(L_v,V)=\im(H^1(L_v,\mathrm{Fil}_v^+(V))\rightarrow H^1(L_v,V)).$$

Put $\mathrm{Fil}_v^\pm(T):=T\cap\mathrm{Fil}_v^\pm(V)$ and $\mathrm{Fil}_v^\pm(A):=\mathrm{Fil}_v^\pm(V)/\mathrm{Fil}_v^\pm(T)$. Recall the perfect, alternating, Galois equivariant pairing in \eqref{weil pairing}, and define
\begin{align}\label{pairing}
(\ ,\ )\colon T\times T & \longrightarrow \cO_\p(1)\\
(x,y) & \longmapsto e(x,y^\tau).
\end{align}
Then, $(\ ,\ )$ is a perfect, symmetric pairing. 
Moreover, $(\ ,\ )$ satisfies $(x,y)^\sigma=(x^\sigma,y^{\tau\sigma\tau^{-1}})$ for every $x,y\in T$ and $\sigma\in G_K$.
By definition of dual lattice, the pairing \eqref{pairing} gives rise to a pairing
\begin{equation}\label{pairing2}
(\ ,\ )\colon T\times A\longrightarrow (\Q_{f,\p}/\cO_\p)(1)=\mu_{p^\infty}
\end{equation}
still denoted by the same symbol. 

One can define $H^1_\mathrm{Gr}(L,T)$ and $H^1_\mathrm{Gr}(L,A)$ using the analogous local conditions of $V$.

\subsubsection{Greenberg's Selmer group of $\Lambda$-adic representations}\label{Greenberg Lamb-adic}
 We want to define a Greenberg type Selmer structure on the $\Lambda$-adic deformation of $T$, that is on $\Lambda$-module defined by
 $$\mathbf{T}:=T\otimes_{\cO_\p}\Lambda.$$
 More concretely, 
 $$\mathbf{T}=\varprojlim \mathrm{Ind}_{K_m/K}(T),$$ 
 where the inverse limit is taken with respect to corestriction maps. Here we recall that, for any $G_K$-module $M$ and any finite extension $L/K$ the symbol $\mathrm{Ind}_{L/K}(M)$ denotes the $G_K$-module whose elements are functions $f\colon G_K\rightarrow M$ such that $f(\sigma x)=\sigma f(x)$ for all $x\in G_K$ and $\sigma \in G_L$. The action of $G_K$ on $\mathrm{Ind}_{L/K}(M)$ is given by $(f^\sigma)(x)=f(x\sigma)$. Since $M$ is a $G_K$-module (not only a $G_L$-module), $\mathrm{Ind}_{L/K}(M)$ is also endowed with a left action of $\Gal(L/K)$ defined by $(\gamma\cdot f)(x)=\tilde \gamma f(\tilde\gamma^{-1}x)$, where $\tilde\gamma\in G_K$ is any lift of $\gamma\in\Gal(L/K)$. One can easily see that the action of $G_K$ and $\Gal(L/K)$ commutes. There are maps
 $$\cores_m\colon \mathrm{Ind}_{K_{m+1}/K}(M)\rightarrow \mathrm{Ind}_{K_m/K}(M)\quad\text{and}\quad \res_m\colon \mathrm{Ind}_{K_m/K}(M)\rightarrow \mathrm{Ind}_{K_{m+1}/K}(M)$$
 defined by $\cores_m(f)=\sum_{\gamma\in\Gal(K_{m+1}/K_m)}\gamma\cdot f$, and $\res_m(f)=f$ on which $\Gal(K_{m+1}/K)$ acts through the quotient $\Gal(K_m/K)$.
 
 Define 
 $$\mathbf{A}:=\varinjlim \mathrm{Ind}_{K_m/K}(A)$$
 with the limit taken with respect to restriction maps, and let
 \begin{align*} 
\mathrm{Fil}_v^+(\mathbf{T}) &:=\mathrm{Fil}_v^+(T)\otimes_{\cO_\p}\Lambda\simeq \varprojlim \mathrm{Ind}_{K_m/K}(\mathrm{Fil}_v^+(T))\\
\mathrm{Fil}_v^+(\mathbf{A}) &:=\varinjlim \mathrm{Ind}_{K_m/K}(\mathrm{Fil}_v^+(A)).
\end{align*}
As observed in \cite[\S 3.3]{LV-Kyoto}, the pairing \eqref{pairing2} induces a perfect, $G_K$-equivariant pairing
\begin{equation}\label{lambda-pairing}
(\ ,\ )_\infty\colon \mathbf{T}\times \mathbf{A}\longrightarrow \mu_{p^\infty}
\end{equation}
such that $(\lambda x,y)_\infty=(x,\iota(\lambda)y)$ for all $x\in\mathbf{T}$, $y\in\mathbf{A}$, $\lambda\in \Lambda$, where $\iota$ is the $\cO_\p$-linear involution of $\Lambda$ induced by $\gamma_\infty\mapsto \gamma_\infty^{-1}$. Since $\mathrm{Fil}_v^+(T)$ and $\mathrm{Fil}_v^+(A)$ are orthogonal to each other with respect to \eqref{pairing2}, $\mathrm{Fil}_v^+(\mathbf{T})$ and $\mathrm{Fil}_v^+(\mathbf{A})$ are orthogonal to each other with respect to the pairing $(\ ,\ )_\infty$.

We will denote by $\mathcal{F}_\Lambda$ the following Selmer structure of Greenberg type on $\mathbf{T}$ over $K$:
$$H^1_{\mathcal{F}_\Lambda}(K_v,\mathbf{T})=
\begin{cases}
\im(H^1(K_v,\mathrm{Fil}_v^+(\mathbf{T}))\rightarrow H^1(K_v,\mathbf{T})) & \text{if}\ v\mid p\\
H^1_\mathrm{unr}(K_v,\mathbf{T}) & \text{if}\ v\nmid p
\end{cases}$$
The corresponding Selmer group is denoted by $H^1_{\mathcal{F}_\Lambda}(K,\mathbf{T})$. In the same way, using $\mathrm{Fil}_v^+(\mathbf{A})$ we can define a Selmer structure, that we continue to denote by $\mathcal{F}_\Lambda$, on $\mathbf{A}$ over $K$ and consequently we have the Selmer group $H^1_{\mathcal{F}_\Lambda}(K,\mathbf{A})$.


\subsection{Comparison of Selmer groups}
Since we are assuming the $f$ is ordinary at $p$, by \cite[(23)]{Howard-Var}, for any number field $L$ there is a natural identification
$$H^1_\mathrm{BK}(L,V)=H^1_\mathrm{Gr}(L,V),$$
and the same is true if we replace $V$ with $T$.

From this identification, as observed in \cite[\S 5.1]{LV-Kyoto} using Shapiro's lemma one gets the isomorphism
\begin{equation}\label{comparison T}
\hat H_{\mathrm{BK}}^1(K_\infty, T)\simeq H^1_{\mathcal{F}_\Lambda}(K,\mathbf{T}).
\end{equation}
Recall that we defined $\mathcal{X}_\infty=H^1_{\mathrm{BK}}(K_\infty,A)^\vee$. Using the so-called control theorems (see \cite[Proposition 3.4]{LV-Kyoto}), one can also show that there exists a pseudo-isomorphism of compact $\Lambda$-modules
\begin{equation}\label{comparison Iw}
\mathcal{X}_\infty \sim H^1_{\mathcal{F}_\Lambda}(K,\mathbf{A})^\vee.
\end{equation}

\subsection{Bounding Selmer groups}

Let $T$ be a finitely generated module over a coefficient ring $R$ of residue characteristic $p$ equipped with a continuous action of $G_K$. Following the notation in \cite{Howard}, let $\mathcal{L}_0=\mathcal{L}_0(T)$ be the set of degree two primes of $K$ which do not divide $p$ and do not belong to the finite set of primes at which $T$ is ramified. A triple $(T,\mathcal{F},\mathcal{L})$ where $\mathcal{F}$ is a Selmer structure on $T$ and $\mathcal{L}\subseteq \mathcal{L}_0$ is a subset disjoint from $\Sigma(\mathcal{F})$ is called a \emph{Selmer triple}.

We introduce the following notation.
\begin{enumerate}[(a)]
\item For each $\lambda\in \mathcal{L}_0$, let $\ell$ be the rational prime below it and define $I_\ell$ to be the smallest ideal of $R$ containing $\ell +1$ and such that $\Frob_{\lambda}$ acts trivially on $T/I_\ell T$.\\
\item For every $k\in\Z_{\geq 1}$ define $\mathcal{L}_k=\mathcal{L}_k(T)=\{\ell\in\mathcal{L}_0\ |\ I_\ell\subseteq p^k\Z_p\}$.\\
\item For $\lambda\in\mathcal{L}_0$ as above, set $G_\ell:=k_{\lambda}^\times/k_\ell^\times$ where $k_\lambda$ e $k_\ell$ are the residue fields of $\lambda$ and $\ell$, respectively.\\
\item Let $\mathcal{N}_0$ denote the set of square-free products of rational primes lying below the elements of $\mathcal{L}_0$. For $n\in \mathcal{N}_0$, define 
$$I_n:=\sum_{\ell\mid n}I_\ell \subseteq R \quad\text{and}\quad G_n:=\bigotimes_{\ell\mid n}G_\ell.$$
By convention $1\in \mathcal{N}_0$, $I_1=0$ and $G_1= \Z$.
\end{enumerate}
Let $\mathcal{N}(\mathcal{L})$ be the set of square-free products of rational primes lying below the primes of $\mathcal{L}$. Given $c\in \mathcal{N}(\mathcal{L})$, we define a new Selmer triple $(T,\mathcal{F} (c),\mathcal{L}(c))$ by taking $\Sigma(\mathcal{F} (c)):=\Sigma(\mathcal{F})\cup \{v\ \text{prime of}\ K: v\mid c\}$, $\mathcal{L}(c):=\mathcal{L}\setminus \{v\ \text{prime of}\ K: v\mid c\}$, and
$$H^1_{\mathcal{F}(c)}(K_v,T):=
\begin{cases}
H^1_{\mathcal{F}}(K_v,T) & \text{if}\ v\nmid c\\
H^1_{\mathrm{tr}}(K_v,T) & \text{if}\ v\mid c
\end{cases}$$
where $H^1_{\mathrm{tr}}(K_v,T)$ denotes the \emph{transverse} subgroup of $H^1(K_v,T)$, that is the kernel
$$H^1_{\mathrm{tr}}(K_v,T):=\ker(H^1(K_v,T)\longrightarrow H^1(K_v^{(p)},T))$$
where $K_v^{(p)}$ denotes the maximal totally tamely ramified abelian $p$-extension of $K_v$ given by the maximal $p$-subextension of $K[c]_v/K_v$. Note that $v$ splits completely in the Hilbert class field of $K$, and $p$ is prime to $c$.

In a general setting, if $v$ is a prime of $K$ which does not divide $p$ and $T$ is unramified at $v$, then the unramified subgroup $H^1_{\mathrm{unr}}(K_v,T)$ is also referred to as the \emph{finite} part of $H^1(K_v,T)$ and written as $H^1_{\mathrm{f}}(K_v,T)$. The quotient of $H^1(K_v,T)$ by $H^1_{\mathrm{f}}(K_v,T)$ is the so-called \emph{singular} part and it is denoted by $H^1_{\mathrm{s}}(K_v,T)$. It turns out that, if $| k_v^\times |\cdot T=0$ then there are canonical isomorphisms
$$H^1_\mathrm{f}(K_v,T)\simeq T/(\Frob_v -1)T, \qquad H^1_\mathrm{s}(K_v,T)\otimes k_v^\times\simeq T^{\Frob_v=1},$$
see \cite[Proposition 1.1.7]{Howard}.
It follows that, if in addition a decomposition group $G_{K_v}$ at $v$ acts trivially on $T$ then one can define the \emph{finite-singular comparison map} as the isomorphism
\begin{equation}\label{fs iso}
\phi_v^{\mathrm{fs}}\colon H^1_\mathrm{f}(K_v,T)\simeq T\simeq H^1_\mathrm{s}(K_v,T)\otimes k_v^\times.
\end{equation}
Moreover, in this case the transverse submodule $H^1_\mathrm{tr}(K_v,T)$ projects isomorphically onto $H^1_\mathrm{s}(K_v,T)$ giving a splitting
$$H^1(K_v,T)=H^1_\mathrm{f}(K_v,T)\oplus H^1_\mathrm{tr}(K_v,T).$$

Let $\mathcal{L}_0=\mathcal{L}_0(\mathbf{T})$. For any $n\ell\in\mathcal{N}_0$, take $T=\mathbf{T}/I_{n\ell}\mathbf{T}$ in the above definitions, where $\mathbf{T}$ is the $\Lambda$-module defined in \ref{Greenberg Lamb-adic}. Then, from \eqref{fs iso} we get the isomorphism
$$\phi_\ell^\mathrm{fs}\colon H^1_\mathrm{f}(K_\ell,\mathbf{T}/I_{n\ell}\mathbf{T})\simeq H^1_\mathrm{s}(K_\ell,\mathbf{T}/I_{n\ell}\mathbf{T})\otimes G_\ell$$
where we have consistently interchanged $\ell\in\mathcal{L}_0$ with the prime $v$ of $K$ above it.

Let $(\mathbf{T},\mathcal{F}_\Lambda,\mathcal{L})$ be a Selmer triple in which $\mathbf{T}$ and $\mathcal{F}_\Lambda$ are defined as in section \ref{Greenberg Lamb-adic}. For any $n\ell\in \mathcal{N}(\mathcal{L})$ with $\ell$ prime, there is a diagram
\begin{equation}\label{diag}
\xymatrix{
 & & H^1_{\mathcal{F}_\Lambda(n\ell)}(K,\mathbf{T}/I_{n\ell}\mathbf{T})\otimes G_{n\ell}\ar[d]^{\mathrm{loc}_\ell}\\
 H^1_{\mathcal{F}_\Lambda(n)}(K,\mathbf{T}/I_{n}\mathbf{T})\otimes G_n \ar[r]^{\mathrm{loc}_\ell} & H^1_{\mathrm{f}}(K_\ell,\mathbf{T}/I_{n\ell}\mathbf{T})\otimes G_n \ar[r]^{\phi_\ell^\mathrm{fs}\otimes 1} & H^1_{\mathrm{s}}(K_\ell,\mathbf{T}/I_{n\ell}\mathbf{T})\otimes G_{n\ell}
}
\end{equation}
\begin{definition}
A \emph{Kolyvagin system} $\bm{\kappa}$ for the Selmer triple $(\mathbf{T},\mathcal{F}_\Lambda,\mathcal{L})$ is a collection of cohomology classes 
$$\kappa_n\in H^1_{\mathcal{F}_\Lambda(n)}(K,\mathbf{T}/I_n \mathbf{T})\otimes G_n$$
for each $n\in \mathcal{N}(\mathcal{L})$, such that for any $n\ell \in \mathcal{N}(\mathcal{L})$ the images of $\kappa_n$ and $\kappa_{n\ell}$ in $H^1_\mathrm{s}(K_\ell,\mathbf{T}/I_{n\ell}\mathbf{T})\otimes G_{n\ell}$ under the maps $(\phi_\ell^\mathrm{fs}\otimes 1)\circ \mathrm{loc}_\ell$ and $\mathrm{loc}_\ell$ in \eqref{diag} agree.
\end{definition}

From now on, we assume that the following ``big image" condition holds
\begin{assumption}\label{big image}
$\{g\in\GL_2(\Z_p)\ :\ \det(g)\in (\Z_p^\times)^{k-1}\}\subseteq \im(G_\Q\overset{\rho_{f,\p}}{\longrightarrow} \Aut(T)\simeq \GL_2(\cO_\p)).$
\end{assumption}

The following structure theorem is a fundamental result for bounding Selmer groups over the anticyclotomic $\Z_p$-extension of $K$.
\begin{theorem}\label{structure}
Let $X:=H^1_{\mathcal{F}_\Lambda}(K,\mathbf{A})^\vee$. Suppose that for some $s\geq 1$ the Selmer triple $(\mathbf{T},\mathcal{F}_\Lambda,\mathcal{L}_s)$ admits a $\Lambda$-adic Kolyvagin system $\bm\kappa$ with $\kappa_1\neq 0$. Then
\begin{enumerate}
\item $H^1_{\mathcal{F}_\Lambda}(K,\mathbf{T})$ is a torsion free $\Lambda$-module of rank $1$;\\
\item there exist a torsion $\Lambda$-module $M$ such that $\mathrm{char} (M)=\mathrm{char}(M)^\iota$ and a pseudo-isomorphism
$$X\sim \Lambda\oplus M \oplus M;$$
\item $\mathrm{char}(M)$ divides $\mathrm{char}(H^1_{\mathcal{F}_\Lambda}(K,\mathbf{T})/\Lambda \kappa_1)$.
\end{enumerate}
\end{theorem}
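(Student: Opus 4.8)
The statement is a direct transcription to the Iwasawa-algebra setting of Howard's structure theorem for Kolyvagin systems, so the plan is to verify that the hypotheses of \cite[Theorem 2.2.10]{Howard} (equivalently, the framework of \cite{Mazur-Rubin} as adapted in \cite{Howard}) are satisfied by the Selmer triple $(\mathbf{T},\mathcal{F}_\Lambda,\mathcal{L}_s)$, and then quote that theorem. Concretely, one checks: (i) $\Lambda$ is a complete noetherian local ring with finite residue field of characteristic $p$; (ii) $\mathbf{T}$ is a free $\Lambda$-module of finite rank with a continuous $G_K$-action unramified outside a finite set; (iii) the residual representation $\bar{\mathbf{T}}=\mathbf{T}/\mathfrak{m}_\Lambda \mathbf{T}$ satisfies the core rank one hypotheses --- namely that $\bar{\mathbf{T}}$ is absolutely irreducible as $G_K$-module, that there is an element $\tau_0\in G_K$ with $\mathbf{T}/(\tau_0-1)\mathbf{T}$ free of rank one over $\Lambda$, and that $H^1(K(\mathbf{T},\mu_{p^\infty})/K,\bar{\mathbf{T}})$ and the analogous group for $\bar{\mathbf{T}}^\vee(1)$ vanish; and (iv) the Selmer structure $\mathcal{F}_\Lambda$ is self-dual (under the pairing \eqref{lambda-pairing} twisted by $\iota$) and has core rank one at $p$ --- which is exactly the point of the orthogonality of $\mathrm{Fil}_v^+(\mathbf{T})$ and $\mathrm{Fil}_v^+(\mathbf{A})$ recorded in Section \ref{Greenberg Lamb-adic}.

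The verification of (iii) is where Assumption \ref{big image} enters, and this is the step I expect to be the main obstacle --- or rather, the step requiring the most care, since it is where the input differs from the elliptic-curve case of \cite{Howard}. The ``big image'' condition guarantees that the image of $G_\Q$ (hence of $G_K$, which has index two) in $\Aut(T)\cong\GL_2(\cO_\p)$ is large enough that: $\bar T$ is absolutely irreducible over $G_K$; the relevant Galois cohomology groups $H^1$ of the group $\Gal(K(\bar T,\mu_{p^\infty})/K)$ acting on $\bar T$ and on $\bar T\otimes\bar T^\vee$ vanish (these are the ``Flach/Kolyvagin'' vanishing statements, cf. \cite[\S1.6]{Howard}); and one can find $\tau_0\in G_K$ acting on $V$ with eigenvalue $1$ on a line and a non-trivial eigenvalue on the complementary line, so that $T/(\tau_0-1)T$ is free of rank one over $\cO_\p$, whence $\mathbf{T}/(\tau_0-1)\mathbf{T}$ is free of rank one over $\Lambda$. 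Here the self-duality of $V$ (the twist $V=V_{f,\p}(k/2)$) is used to ensure the determinant condition is compatible with the existence of such a $\tau_0$; this is precisely the argument in \cite[Lemma 2.1.4 and \S 2.2]{Howard}, which goes through verbatim with $\cO_\p$ in place of $\Z_p$ since the hypothesis is phrased relative to $(\Z_p^\times)^{k-1}$ exactly as needed. The ordinarity of $f$ is what makes the Greenberg local condition at $p$ well-behaved: $\mathrm{Fil}_v^+(V)$ is one-dimensional, so the local condition is ``of rank one'' at each $v\mid p$, and since $p$ splits in $K$ there are two primes above $p$ contributing, giving the global core rank $1$.

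Having checked (i)--(iv), the three conclusions are exactly the output of Howard's theorem: conclusion (1), that $H^1_{\mathcal{F}_\Lambda}(K,\mathbf{T})$ is torsion-free of rank one over $\Lambda$, follows because $\kappa_1\neq 0$ forces the ``leading term'' of the Kolyvagin system to be non-trivial and the core rank is one; conclusion (2), the pseudo-isomorphism $X\sim\Lambda\oplus M\oplus M$ with $\mathrm{char}(M)=\mathrm{char}(M)^\iota$, comes from the general structure theory of \cite{Howard} combined with the self-duality of the Selmer structure under $\iota$ (the functional equation forcing the characteristic ideal of the ``error term'' to be $\iota$-stable and to appear with even multiplicity); and conclusion (3), the divisibility $\mathrm{char}(M)\mid \mathrm{char}\bigl(H^1_{\mathcal{F}_\Lambda}(K,\mathbf{T})/\Lambda\kappa_1\bigr)$, is the Kolyvagin-system bound relating the index of the bottom class to the size of the dual Selmer group. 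All three are stated in \cite[Theorem 2.2.10]{Howard} (see also \cite[Theorem 3.1]{LV-Kyoto}, where the same deduction is carried out in the higher-weight modular-curve setting), so the proof amounts to invoking that theorem once the hypotheses are in place; no new argument beyond the bookkeeping above is needed.
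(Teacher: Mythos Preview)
Your proposal is correct and follows the same approach as the paper: the theorem is a direct application of Howard's structure theorem \cite[Theorem 2.2.10]{Howard} (via its higher-weight adaptation \cite[Theorem 3.5]{LV-Kyoto}), once the big-image assumption and the ordinary self-dual Selmer structure place us in the required framework. The paper's own proof is in fact just a citation to those two references (with a remark that one hypothesis in \cite[Theorem 3.5]{LV-Kyoto} is superfluous), so your sketch of the hypothesis-checking is if anything more detailed than what appears there.
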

\begin{proof}
See \cite[Theorem 3.5]{LV-Kyoto} and \cite[Theorem 2.2.10]{Howard}. Note that the first line in \cite[Theorem 3.5]{LV-Kyoto} is superfluous as that hypothesis is not necessary to prove the theorem.
\end{proof}
In light of Theorem \ref{structure}, it remains to construct a Kolyvagin system for the Selmer triple $(\mathbf{T},\mathcal{F}_\Lambda,\mathcal{L}_s)$ such that $\kappa_1\neq 0$. This is accomplished in the following section.

\section{A $\Lambda$-adic Kolyvagin system of generalized Heegner cycles in a quaternionic setting}
This section contains one of the main results of this paper, that is the construction of a $\Lambda$-adic Kolyvagin system of generalized Heegner classes coming from cycles on a variety fibered over a Shimura curve. This object is the main ingredient, together with Theorem \ref{structure}, to prove the one-sided divisibility relation in the anticyclotomic Iwasawa main conjecture for modular forms and an imaginary quadratic fields that satisfy only a relaxed Heegner hypothesis in the sense specified in the introduction, and hence that do not have Heegner cycles coming from a modular curve.

Recall from Section \ref{class-Paola} the class $z_{f,1,c_0 p^s}\in H^1(K[c_0 p^s],T)$ in \eqref{class}, for every positive integer $c_0$ coprime with $pN^+$ and $s\geq 0$. To simplify the notation, we will just write $z_{c_0 p^s}$ for such a class. By \cite[Theorem 3.1]{Nekovar-AJ}, the classes $z_{c_0 p^s}$ lie in the Bloch-Kato Selmer group $H^1_{\mathrm{BK}}(K[c_0 p^s],T)$.

Let $\mathcal{L}:=\mathcal{L}_1(\mathbf{T})$ such that $(\mathbf{T},\mathcal{F}_\Lambda,\mathcal{L})$ is a Selmer triple, and let $\mathcal{N}:=\mathcal{N}(\mathcal{L})$. For every $\ell\in\mathcal{N}$ write $\lambda$ for the unique prime of $K$ above $\ell$, and fix a prime $\bar\lambda$ of $\bar \Q$ above $\ell$.

For $n\in \mathcal{N}$ let $K_m[n]$ be the compositum of $K_m$ and $K[n]$, and let $K_\infty[n]$ be the union over all integers $m\geq 0$ of $K_m[n]$. The prime $\bar \lambda$ determines a prime $\lambda_{m,n}\in K_m[n]$; we denote by $K_m[n]_{\bar\lambda}$ the completion of $K_m[n]$ at $\lambda_{m,n}$.

For $n\in\mathcal{N}$ and $m\geq 0$ define classes $\alpha_m[n]\in H^1_\mathrm{BK}(K_m[n],T)$ by
$$\alpha_m[n]:=\cores_{K[np^{m+1}]/K_m[n]}(z_{n p^{m+1}}).$$
Let $\mathcal{G}(n):=\Gal(K[n]/K)$ and $\sigma_\wp$, $\sigma_{\bar\wp}$ be the Frobenius elements at the two primes $\wp$, $\bar\wp$ of $K$ above $p$. Define $\gamma_m\in \cO_\p[\mathcal{G}(n)]$ by the recursive formula
\begin{equation}\label{recursive-def}
\gamma_m:= a_p\gamma_{m-1}-p^{k-1}\gamma_{m-2}
\end{equation}
for all $m\geq 3$ with
\begin{align*}
\gamma_1 &:= a_p-p^{\frac{k-2}{2}}(\sigma_\wp+\sigma_{\bar\wp}),\\
\gamma_2 &:= a_p\gamma_1-p^{k-2}(p-1)=a_p^2-p^{\frac{k-2}{2}}a_p(\sigma_\wp+\sigma_{\bar\wp})-p^{k-2}(p-1).
\end{align*}
The following relations between the classes $\alpha_m[n]$ hold.

\begin{lemma}\label{relations}
For all $n\geq 1$,
\begin{enumerate}
\item $\cores_{K_{m+1}[n]/K_m[n]}(\alpha_{m+1}[n])=a_p \alpha_m[n]-p^{k-2}\res_{K_{m-1}[n]/K_m[n]}(\alpha_{m-1}[n])$, for all $m\geq 1$;\\
\item $\cores_{K_m[n]/K[n]}(\alpha_m[n])=\gamma_{m+1} z_n$, for all $m\geq 0$;\\
\item $\cores_{K_m[n\ell]/K_m[n]}(\alpha_m[n\ell])=a_\ell \alpha_m[n]$ for all $\ell\nmid n$ inert in $K$ and all $m\geq 0$.
\end{enumerate}
\end{lemma}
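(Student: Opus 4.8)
The plan is to derive all three relations from norm-compatibility properties of the generalized Heegner classes $z_{c_0p^s}$ together with the Eichler--Shimura/Euler-system relations satisfied by the Hecke action of $U_p$ (at $p$) and $T_\ell$ (at inert primes $\ell$) on these cycles. These are the exact analogues of the relations proved by Howard in the elliptic-curve case and by Longo--Vigni in the higher-weight modular-curve case; the only new input is that the underlying cycles live on the generalized Kuga-Sato variety $Y_k$ fibered over the Shimura curve $X_{1,N^+}$, so one must check that the distribution relations for generalized Heegner cycles in the quaternionic setting (established in \cite{Magrone}, following \cite{BDP} and \cite{LV-Kyoto}) take the same shape. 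Throughout I would fix $n$ and work with the tower of fields $K[np^{m+1}]$, using that $z_{np^{m+1}}\in H^1_{\mathrm{BK}}(K[np^{m+1}],T)$ and that $\alpha_m[n]$ is by definition its corestriction to $K_m[n]$.

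For part (1), I would start from the first-layer norm relation for the $z$'s: the corestriction from $K[np^{m+2}]$ to $K[np^{m+1}]$ of $z_{np^{m+2}}$ equals $U_p\cdot z_{np^{m+1}}$ for $m\geq 1$, and $U_p$ acts on the relevant cohomology via the two-variable relation coming from the Hecke polynomial $X^2-a_pX+p^{k-1}$ (in the ordinary case $U_p$ has the unit eigenvalue, but the cleanest route is the ``horizontal'' relation that for $m\ge1$ one has $\cores(z_{np^{m+1}})=a_p z_{np^m}-p^{k-2}\res\,\sigma\, z_{np^{m-1}}$ with an appropriate twist built into the normalization \eqref{class}). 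Corestricting this identity down from $K[np^{m+1}]$-level to $K_m[n]$-level, and using that corestriction is transitive and commutes with restriction along the disjoint pieces of the tower (here one uses $p\nmid h_K$ and the splitting $\Gal(K[np^{m+1}]/K)\simeq \Gamma_m\times\Delta$ together with the fact that $\ell\mid n$ are split in $H$), yields exactly relation (1). The factor $p^{k-2}$ rather than $p^{k-1}$ is the shadow of the twist by $k/2$ inside $T=T_{f,\p}(k/2)$ and of the half-twist used to pass from $z_{f,c}$ to its trivial component $z_{f,1,c}$; I would track this bookkeeping carefully.

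For part (2), I would argue by induction on $m$, feeding relation (1) into the recursion \eqref{recursive-def}. Concretely, $\cores_{K_m[n]/K[n]}(\alpha_m[n])=\cores_{K_1[n]/K[n]}\big(\cores_{K_m[n]/K_1[n]}(\alpha_m[n])\big)$, and iterating (1) expresses $\cores_{K_m[n]/K_1[n]}(\alpha_m[n])$ as a $\Z[a_p,p]$-linear combination of $\alpha_1[n]$ and $\res(\alpha_0[n])$ whose coefficients satisfy precisely the Chebyshev-type recursion defining the $\gamma_m$; the base cases $m=0,1,2$ are checked by hand, where the discrepancy term $-p^{k-2}(p-1)$ in $\gamma_2$ appears because $\cores_{K_1[n]/K[n]}\circ\res_{K[n]/K_1[n]}$ is multiplication by $[K_1[n]:K[n]]=p$, not by $1$, and the operators $\sigma_\wp,\sigma_{\bar\wp}$ enter through the splitting behavior of $p$ in the ring class tower. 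The identity $\cores_{K[np^{m+1}]/K[n]}(z_{np^{m+1}})=\gamma_{m+1}z_n$ is then repackaged as the statement about $\alpha_m[n]$ after corestricting the $\Delta$-part away. Finally, part (3) is the standard ``vertical'' Euler-system relation at an inert prime $\ell\nmid n$: corestriction from $K[n\ell p^{m+1}]$ to $K[np^{m+1}]$ sends $z_{n\ell p^{m+1}}$ to $a_\ell\, z_{np^{m+1}}$ (again a twisted Hecke relation, with no correction term since $\ell$ is inert so the prime does not split in the $\ell$-part of the ring class tower), and corestricting down to $K_m[n]$ gives (3) verbatim; here $a_\ell=T_\ell$ acts via the Jacquet--Langlands-transferred eigenvalue on $f_B$, so the relation is literally the one coming from the Hecke correspondence on $X_{N^+}$.

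The main obstacle I anticipate is not the formal Galois-cohomology manipulation but the precise verification of the twisted distribution relations for the classes $z_{f,1,c_0p^s}$ on $Y_k$: one must know that the Abel--Jacobi images of the cycles $\Delta_\varphi$ satisfy, after applying $r_{B,f}\otimes\mathrm{id}$ and taking the trivial component, exactly the Hecke relations $\cores(z_{np^{m+1}})=a_p z_{np^m}-p^{k-2}\sigma\,\res\,z_{np^{m-1}}$ and $\cores(z_{n\ell p^{m+1}})=a_\ell z_{np^{m+1}}$ with the stated exponents. This is where the quaternionic geometry matters — the correspondences realizing $U_p$ and $T_\ell$ on $X_{1,N^+}$ (equivalently on the auxiliary fine moduli curve) act on the universal abelian surface $\mathcal{A}$ and hence on $Y_k=\mathcal{A}^{(k-2)/2}\times A^{(k-2)/2}$, and one must check compatibility with the projector $\epsilon_{Y_k}$ and with the isomorphism $V_{f,\p}\simeq V_{f_B,\p}$. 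I would import these facts from \cite{Magrone} (or reprove them by transcribing the arguments of \cite[\S 5--6]{LV-Kyoto} with $X_0(N)$ replaced by $X_{N^+}$), and then the rest of the lemma is a clean induction as sketched above.
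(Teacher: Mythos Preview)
Your approach is essentially the paper's: all three parts are reduced to the distribution relations for the classes $z_{np^s}$ established in \cite[Proposition~5.3]{Magrone}, with (1) following by unwinding the definition of $\alpha_m[n]$ and transitivity of corestriction, (2) by induction using the same relation (the paper checks $m=0,1$ by hand and inducts from there), and (3) by the inert-prime Euler relation $T_\ell z_c=\cores_{K[c\ell]/K[c]}(z_{c\ell})$.

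Two small corrections. First, in the $p$-distribution relation there is no Frobenius twist on the lower term: the paper's input is simply $\cores_{K[np^{m+2}]/K[np^{m+1}]}(z_{np^{m+2}})=a_p z_{np^{m+1}}-p^{k-2} z_{np^m}$, so your ``$\res\,\sigma$'' should be dropped (the $\sigma_\wp,\sigma_{\bar\wp}$ only enter at the very bottom, in the $m=0$ formula $T_p z_n=\cores_{K[np]/K[n]}(z_{np})+p^{(k-2)/2}(\sigma_\wp+\sigma_{\bar\wp}) z_n$). Second, the factor $(p-1)$ in $\gamma_2$ does not come from $[K_1[n]:K[n]]=p$ but from $[K[np]:K[n]]=p-1$ (since $p$ splits in $K$): in the $m=1$ base case one computes $\cores_{K[np^2]/K[n]}(z_{np^2})=a_p\cores_{K[np]/K[n]}(z_{np})-p^{k-2}\cores_{K[np]/K[n]}\!\circ\res_{K[n]/K[np]}(z_n)$, and it is the second corestriction-of-restriction that contributes the $(p-1)$.
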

\begin{proof}
(1) By definition one has
\begin{equation*}
\begin{split}
\cores_{K_{m+1}[n]/K_m[n]}(\alpha_{m+1}[n]) & = \cores_{K[np^{m+2}]/K_m[n]}(z_{np^{m+2}})\\
& = \cores_{K[np^{m+1}]/K_m[n]}\cores_{K[np^{m+2}]/K[np^{m+1}]}(z_{np^{m+2}}).
\end{split}
\end{equation*}
Using the first part of \cite[Proposition 5.3]{Magrone}, we have $\cores_{K[np^{m+2}]/K[np^{m+1}]}(z_{np^{m+2}})=a_p z_{np^{m+1}}-p^{k-2} z_{np^m}$, and hence
$$\cores_{K_{m+1}[n]/K_m[n]}(\alpha_{m+1}[n])= a_p \alpha_m[n]-p^{k-2}\cores_{K[np^{m+1}]/K_m[n]}(z_{np^m}).$$
Finally,
\begin{equation*}
\begin{split}
\cores_{K[np^{m+1}]/K_m[n]}(z_{np^m})&=\res_{K_{m-1}[n]/K_m[n]}(\cores_{K[np^m]/K_{m-1}[n]}( z_{np^m}))\\
&=\res_{K_{m-1}[n]/K_m[n]}(\alpha_{m-1}[n]).
\end{split}
\end{equation*}

For $m\geq 2$, (2) can be proved by induction on $m$ using the first part of \cite[Proposition 5.3]{Magrone} and the first two steps, $m=0$ and $m=1$, that can be checked directly. For $m=0$ on the left hand side one just has $\alpha_0[n]$ which by definition is $\cores_{K[np]/K[n]}(z_{np})$. Then the claim follows from the equality
$$T_p z_n=\cores_{K[np]/K[n]}(z_{np})+p^{\frac{k-2}{2}}\sigma_\wp z_n+p^{\frac{k-2}{2}}\sigma_{\bar\wp} z_n,$$
which is the analogue for generalized Heegner classes coming from cycles on Shimura curves of the well known formula for Heegner points that one can find in \cite[\S 3.1 Proposition 1]{Perrin-Riou} (note that we are assuming $\# \cO_K^\times =2$ and hence $\delta=1$ in \emph{op.cit.}). For $m=1$, one has 
$$\cores_{K_1[n]/K[n]}(\alpha_1[n])=\cores_{K[np^2]/K[n]}(z_{np^2})$$
and the claim follows as in the proof of part (1) and observing that $[K[np]:K[n]]=p-1$.

(3) is an easy computation after applying the formula
$$T_\ell z_c=\cores_{K[c\ell]/K[c]}(z_{c\ell})$$
for all $c$ with $\ell\nmid c$ inert in $K$, proved in \cite[Proposition 5.3]{Magrone}.
\end{proof}

Let $\alpha$ be the $p$-adic unit root of $X^2-a_p X+p^{k-1}$, and define $\Phi\in \cO_\p[\mathcal{G}(n)]$ as the element
\begin{equation}\label{phi}
\Phi=\left(1-p^{(k-2)/2}\frac{\sigma_\wp}{\alpha}\right)\left(1-p^{(k-2)/2}\frac{\sigma_{\bar\wp}}{\alpha}\right).
\end{equation}

\begin{lemma}\label{recursive}
For all $n\geq 2$ the elements $\gamma_n\in\cO_\p[\mathcal{G}(n)]$ can be written as
\[ \gamma_n=q_n\Phi+\frac{p^{(n-1)(k-1)}}{\alpha^{n-1}}\gamma_1,\]
where $q_{n+1}\equiv a_p q_n\pmod p$.
\end{lemma}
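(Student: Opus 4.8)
The claim is an identity in $\cO_\p[\mathcal{G}(n)]$ relating the sequence $\gamma_n$ defined by the recursion \eqref{recursive-def} to the idempotent-type element $\Phi$ from \eqref{phi}. The natural approach is induction on $n$. First I would record the algebraic input: $\alpha$ and $\beta:=p^{k-1}/\alpha$ are the two roots of $X^2-a_pX+p^{k-1}$, so $\alpha+\beta=a_p$ and $\alpha\beta=p^{k-1}$; hence $p^{k-1}/\alpha=\beta$ and, crucially, $p^{(k-2)/2}/\alpha$ and the quantity $\beta/p^{(k-2)/2}=p^{(k-2)/2}/\alpha\cdot\beta\alpha/p^{k-2}$ are the relevant ``normalized'' roots. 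A cleaner way to phrase things: expanding \eqref{phi} gives
\[
\Phi = 1 - \frac{p^{(k-2)/2}}{\alpha}(\sigma_\wp+\sigma_{\bar\wp}) + \frac{p^{k-2}}{\alpha^2}\sigma_\wp\sigma_{\bar\wp},
\]
and since $\sigma_\wp\sigma_{\bar\wp}$ acts as the Frobenius at $p$, which on $\mathcal{G}(n)=\Gal(K[n]/K)$ (with $(n,p)=1$) is a well-understood element, one can compare $\alpha^2\Phi$ with $\gamma_1\cdot\alpha + (\text{lower order})$. The goal is to identify $q_n$ as essentially the truncated geometric-type sum that solves the inhomogeneous recursion.

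For the induction itself I would proceed as follows. Define $q_n$ by $q_n := \alpha^{-(n-1)}\bigl(\gamma_n - \alpha^{-(n-1)}p^{(n-1)(k-1)}\gamma_1\bigr)\Phi^{-1}$ formally, or better, define $q_n$ by its own recursion and prove the displayed formula holds. The base cases $n=2$ (and, if the recursion needs two seeds, also $n=3$, or one reindexes using $\gamma_1$ and $\gamma_2$) are checked by direct substitution of the explicit expressions for $\gamma_1,\gamma_2$ given before Lemma \ref{relations}, together with the expansion of $\Phi$ above; here one uses that $p^{\frac{k-2}{2}}a_p(\sigma_\wp+\sigma_{\bar\wp})$ and $p^{k-2}(p-1)$ in $\gamma_2$ match the corresponding terms produced by $a_p\cdot(q_1\Phi + \ldots)$. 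For the inductive step, assume $\gamma_{n-1}=q_{n-1}\Phi + \alpha^{-(n-2)}p^{(n-2)(k-1)}\gamma_1$ and $\gamma_n = q_n\Phi + \alpha^{-(n-1)}p^{(n-1)(k-1)}\gamma_1$; plug these into $\gamma_{n+1}=a_p\gamma_n - p^{k-1}\gamma_{n-1}$. The $\Phi$-terms collect to $(a_p q_n - p^{k-1}q_{n-1})\Phi$, and the $\gamma_1$-terms collect to
\[
\Bigl(\frac{a_p p^{(n-1)(k-1)}}{\alpha^{n-1}} - \frac{p^{k-1}\cdot p^{(n-2)(k-1)}}{\alpha^{n-2}}\Bigr)\gamma_1 = \frac{p^{(n-1)(k-1)}}{\alpha^{n-1}}\bigl(a_p - \alpha\bigr)\gamma_1 = \frac{p^{(n-1)(k-1)}}{\alpha^{n-1}}\cdot\frac{p^{k-1}}{\alpha}\gamma_1 = \frac{p^{n(k-1)}}{\alpha^{n}}\gamma_1,
\]
using $a_p-\alpha=\beta=p^{k-1}/\alpha$. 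This is exactly the desired $\gamma_1$-coefficient for $\gamma_{n+1}$, so setting $q_{n+1}:=a_p q_n - p^{k-1}q_{n-1}$ closes the induction on the displayed formula. The congruence $q_{n+1}\equiv a_p q_n \pmod p$ is then immediate since $p^{k-1}q_{n-1}\equiv 0\pmod p$ (as $k\geq 4$, indeed $k\geq 2$ suffices), and one should check the base case of this congruence too, i.e. $q_2\equiv a_p q_1\pmod p$, from the explicit form of $q_1,q_2$.

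The main obstacle — really the only subtle point — is bookkeeping with the element $\sigma_\wp\sigma_{\bar\wp}$ and the constant $p$-power normalizations: one must verify that the ``cross term'' $p^{k-2}\alpha^{-2}\sigma_\wp\sigma_{\bar\wp}$ appearing in $\alpha^2\Phi$ is correctly absorbed, so that $q_1$ (which I expect to be $1$, or a unit) and $q_2$ come out with the right $p$-integral shape; in particular one needs that $\gamma_1 = a_p - p^{(k-2)/2}(\sigma_\wp+\sigma_{\bar\wp})$ relates to $\alpha\Phi$ up to the correction $p^{k-2}\alpha^{-1}\sigma_\wp\sigma_{\bar\wp}$, which forces the precise value of $q_2$ and explains why the recursion for $q_n$ is the "same" recursion as for $\gamma_n$ but with a shifted seed. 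Everything else is a routine two-term linear recursion argument, and the identity $\alpha\beta = p^{k-1}$, $\alpha+\beta=a_p$ does all the real work. I would also double-check the edge indices (whether the formula is claimed for $n\geq 2$ with seeds at $n=1,2$ matching $\gamma_1$ and the formula's prediction for "$\gamma_2$") to make sure no off-by-one error creeps in.
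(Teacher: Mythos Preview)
Your approach is essentially the paper's: both argue by induction on the recursion $\gamma_{n+1}=a_p\gamma_n-p^{k-1}\gamma_{n-1}$, and your computation of the $\gamma_1$-coefficient via $a_p-\alpha=p^{k-1}/\alpha$ is exactly what the paper does. The only cosmetic difference is that the paper writes down the closed form $q_n=\sum_{j=0}^{n-2}\alpha^{n-2j}p^{j(k-1)}$ rather than defining $q_n$ recursively; the base case $\gamma_2=\alpha^2\Phi+\tfrac{p^{k-1}}{\alpha}\gamma_1$ is checked directly (using $\sigma_\wp\sigma_{\bar\wp}=1$ in $\mathcal{G}(n)$, the point you flag as subtle), and your remarks about a putative $q_1$ being a unit are off---if one extends the closed form to $n=1$ the empty sum gives $q_1=0$, consistent with the trivial identity $\gamma_1=0\cdot\Phi+\gamma_1$---but this does not affect the argument since the claim starts at $n=2$.
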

\begin{proof}
We proceed by induction as in \cite[Lemma 4]{Perrin-Riou}. Take $q_n$ to be the elements $$\sum_{j=0}^{n-2}\alpha^{n-2j}p^{j(k-1)}.$$ 
By a direct computation, for $n=2$ one has $\gamma_2=\alpha^2\Phi+\frac{p^{k-1}}{\alpha} \gamma_1$. Now we look at the case $n=3$ using the case $n=2$. By the recursive formula \eqref{recursive-def}, we know that
\[ \gamma_3=a_p\gamma_2-p^{k-1}\gamma_1, \]
and replacing the previous expression for $\gamma_2$ we obtain
\[ \gamma_3=a_p\alpha^2\Phi+a_p\frac{p^{k-1}}{\alpha}\gamma_1-p^{k-1}\gamma_1. \]
Using the fact that $a_p=\alpha+\frac{p^{k-1}}{\alpha}$ we obtain the desired formula. Now fix $n\geq 3$ and assume that the statement of the lemma is true for all $t$ with $2\leq t\leq n$. We prove the formula for $\gamma_{n+1}$:
\begin{align*}
\gamma_{n+1} &=a_p\gamma_n-p^{k-1}\gamma_{n-1}\\
                        &=\left(\alpha+\frac{p^{k-1}}{\alpha}\right)\left(q_n\Phi+\frac{p^{(n-1)(k-1)}}{\alpha^{n-1}}\gamma_1\right)-p^{k-1}\left(q_{n-1}\Phi+\frac{p^{(n-2)(k-1)}}{\alpha^{n-2}}\gamma_1\right)\\
                        &=\left(\sum_{t=0}^{n-1}\alpha^{n+1-2t}p^{t(k-1)}\right)\Phi+\frac{p^{n(k-1)}}{\alpha^n}\gamma_1
\end{align*}
as desired.
\end{proof}
\begin{lemma}\label{perrin}
If $M$ is any finitely generated $\cO_\p[\mathcal{G}(n)]$-module, the intersection of $\gamma_m M$ for $m\geq 1$ is equal to $\Phi M$.
\end{lemma}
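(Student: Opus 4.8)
The plan is to observe that, under the running hypotheses $k\geq 4$ and $f$ ordinary at $p$, both $\Phi$ and every $\gamma_m$ are in fact \emph{units} in the ring $R:=\cO_\p[\mathcal{G}(n)]$, so that $\Phi M=M=\gamma_m M$ for all $m\geq 1$ and both sides of the claimed equality collapse to $M$. This is precisely the point at which the higher-weight situation departs from the weight-two case treated in \cite{Perrin-Riou}, where $\Phi$ fails to be a unit and the statement is genuinely substantive.

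To begin, $R$ is a free $\cO_\p$-module of finite rank, hence $p$-adically complete and separated; therefore $1+pR\subseteq R^\times$ (for $x\in pR$ the element $1-x$ has inverse $\sum_{i\geq 0}x^i$, which converges $p$-adically), and consequently $u(1+pR)\subseteq R^\times$ for every unit $u$ of $\cO_\p$. Since $k\geq 4$ is even, $(k-2)/2\geq 1$ and thus $p^{(k-2)/2}\in pR$; each of the two factors $1-p^{(k-2)/2}\sigma_\wp/\alpha$ and $1-p^{(k-2)/2}\sigma_{\bar\wp}/\alpha$ of $\Phi$ therefore lies in $1+pR$, so $\Phi\in R^\times$ and $\Phi M=M$.

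Next I would check, by induction on $m$, that $\gamma_m\in a_p^m+pR$ for all $m\geq 1$. The cases $m=1$ and $m=2$ follow at once from the explicit formulas $\gamma_1=a_p-p^{(k-2)/2}(\sigma_\wp+\sigma_{\bar\wp})$ and $\gamma_2=a_p^2-p^{(k-2)/2}a_p(\sigma_\wp+\sigma_{\bar\wp})-p^{k-2}(p-1)$, together with $p^{(k-2)/2}\in pR$ and $p^{k-2}(p-1)\in pR$. For $m\geq 3$ the recursion $\gamma_m=a_p\gamma_{m-1}-p^{k-1}\gamma_{m-2}$, the inclusion $p^{k-1}\in pR$, and the inductive hypothesis give $\gamma_m\equiv a_p\cdot a_p^{m-1}=a_p^m\pmod{pR}$. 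Since $a_p\in\cO_\p^\times$ by ordinarity and $a_p^m+pR=a_p^m(1+pR)$, the previous paragraph yields $\gamma_m\in R^\times$, so $\gamma_m M=M$ for every $m\geq 1$, and hence $\bigcap_{m\geq 1}\gamma_m M=M=\Phi M$, as asserted.

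There is, accordingly, no real obstacle here beyond correctly tracking the two hypotheses: it is the strict inequality $k>2$ that places $p^{(k-2)/2}$ in $pR$ and makes $\Phi$ a unit, and it is ordinarity at $p$ that does the same for the $\gamma_m$. One can alternatively stay closer to Perrin-Riou's argument by invoking Lemma~\ref{recursive}: there $q_2=\alpha^2\in\cO_\p^\times$ and $q_{m+1}\equiv a_p q_m\pmod p$, so all $q_m$ are units, while the residual term $p^{(m-1)(k-1)}\alpha^{-(m-1)}\gamma_1$ is divisible by $p^{(m-1)(k-1)}$; combining this with the $p$-adic separatedness of $M/\Phi M$ and of each $\gamma_m M$ again gives $\bigcap_m\gamma_m M=\Phi M$. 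The direct unit argument above is nevertheless shorter.
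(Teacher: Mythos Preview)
Your argument is correct. Under the standing hypotheses $k\geq 4$ and $a_p\in\cO_\p^\times$, the observation that $p^{(k-2)/2}\in pR$ does indeed force $\Phi\in 1+pR\subseteq R^\times$, and the induction $\gamma_m\equiv a_p^m\pmod{pR}$ is clean, so both sides of the asserted equality are simply $M$.

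This is, however, not the route taken in the paper. The paper invokes Lemma~\ref{recursive} and follows Perrin-Riou's original argument (Corollaire~5 of \cite[\S 3.3]{Perrin-Riou}): from $\gamma_m=q_m\Phi+p^{(m-1)(k-1)}\alpha^{-(m-1)}\gamma_1$ with $q_m$ a unit one reads off $\Phi M\subseteq\gamma_m M$ for every $m$, while conversely any $x\in\bigcap_m\gamma_m M$ lies in $\Phi M+p^{(m-1)(k-1)}M$ for all $m$, hence in $\Phi M$ by $p$-adic separatedness of the finitely generated $\cO_\p$-module $M/\Phi M$. Your closing paragraph sketches essentially this alternative. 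The difference is that the paper's argument is uniform in $k$ and mirrors the weight-two template exactly, whereas your direct unit check exploits the strict inequality $k>2$ to short-circuit the lemma entirely. Your approach is certainly shorter here; the paper's has the virtue of making the parallel with \cite{Perrin-Riou} transparent and of not depending on noticing that the lemma is vacuous in higher weight.
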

\begin{proof}
This follows from Lemma \ref{recursive}, as for Corollaire 5 of \cite[\S 3.3]{Perrin-Riou}.
\end{proof}

For $n\in\mathcal{N}$and $m\geq 0$ denote by $H_m[n]$ the $\cO_\p[\Gal(K_m[n]/K)]$-submodule of $H^1_{\mathrm{BK}}(K_m[n],T)$ generated by the restrictions of the classes $z_n$ and $\alpha_j[n]$ for all $j\leq m$. Part (1) of Lemma \ref{relations} implies that the corestriction from $K_{m+1}[n]$ to $K_m[n]$ sends $H_{m+1}[n]$ to $H_m[n]$. Hence we can define a $\Lambda[\mathcal{G}(n)]$-module
\begin{equation}\label{Iw module}
H_\infty[n]:=\varprojlim_m H_m[n]\subseteq \varprojlim_m H^1_\mathrm{BK}(K_m[n],T).
\end{equation}
Note that $\varprojlim_m H^1_\mathrm{BK}(K_m[n],T)\subseteq \varprojlim_m H^1(K_m[n],T)=:H^1(K_\infty[n],T)$ and, as observed in \cite[\S 3.3 eq. (4)]{LV-Kyoto}, $H^1(K_\infty[n],T)\simeq H^1(K[n],\mathbf{T})$. Therefore, in the next we will sometimes view $H_\infty[n]$ as contained in $H^1(K[n],\mathbf{T})$.

\begin{proposition}\label{family}
There exists a family
$$\{\beta[n]=(\beta_m[n])_{m\geq 0}\in H_\infty[n]\}_{n\in\mathcal{N}}$$
such that $\beta_0[n]=\Phi z_n$, and for any $n\ell\in\mathcal{N}$
$$\cores_{K_\infty[n\ell]/K_\infty[n]}(\beta[n\ell])=a_\ell \beta[n].$$
\end{proposition}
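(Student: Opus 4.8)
\emph{Strategy.} The plan is to adapt Perrin-Riou's construction of $\Lambda$-adic Heegner classes, in the form used by Longo--Vigni \cite{LV-Kyoto} (going back to \cite{Perrin-Riou}). The obstruction is that the classes $\alpha_m[n]$ are \emph{not} corestriction-compatible: by Lemma~\ref{relations}(1) they obey a three-term recursion with characteristic polynomial $X^2-a_pX+p^{k-1}$, so the idea is to correct them by the $p$-adic unit root $\alpha$ in order to extract a genuinely norm-coherent sequence whose bottom layer is $\Phi z_n$. Concretely, for fixed $n\in\mathcal N$ I would set
$$\beta_m[n]:=\alpha^{-m-1}\alpha_m[n]-p^{k-2}\alpha^{-m-2}\,\res_{K_{m-1}[n]/K_m[n]}\bigl(\alpha_{m-1}[n]\bigr)\qquad(m\ge 1),$$
which lies in $H_m[n]$ and has all coefficients in $\cO_\p$ precisely because $\alpha$ is a $p$-adic unit (this is where ordinarity is used).

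Using Lemma~\ref{relations}(1), the identity $\cores_{K_{m+1}[n]/K_m[n]}\circ\res_{K_m[n]/K_{m+1}[n]}=[K_{m+1}[n]:K_m[n]]=p$, and the relation $a_p\alpha-p^{k-1}=\alpha^2$ satisfied by the unit root, a short computation shows $\cores_{K_{m+1}[n]/K_m[n]}(\beta_{m+1}[n])=\beta_m[n]$ for every $m\ge 1$; then one sets $\beta_0[n]:=\cores_{K_1[n]/K[n]}(\beta_1[n])$, so that $\beta[n]=(\beta_m[n])_{m\ge 0}$ is a coherent sequence in $H_\infty[n]$. To identify the bottom layer I would combine Lemma~\ref{relations}(2) (which gives $\cores_{K_1[n]/K[n]}(\alpha_1[n])=\gamma_2z_n$ and $\alpha_0[n]=\gamma_1z_n$) with the case $\gamma_2=\alpha^2\Phi+(p^{k-1}/\alpha)\gamma_1$ of Lemma~\ref{recursive} and $[K_1[n]:K[n]]=p$: these yield $\cores_{K_1[n]/K[n]}(\beta_1[n])=\alpha^{-3}(\alpha\gamma_2-p^{k-1}\gamma_1)z_n=\Phi z_n$, as required. (Equivalently, and closer to Lemma~\ref{perrin}, one could first check $\Phi z_n\in\bigcap_m\cores_{K_m[n]/K[n]}(H_m[n])$ and then use compactness of the profinite groups $H_m[n]$ to produce a coherent lift; the explicit formula above has the advantage of being manifestly uniform in $n$.)

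For the Euler relation, observe that the coefficients in the formula for $\beta_m[n]$ do not depend on $n$, so $\beta_m[n\ell]$ is given by the same expression in the $\alpha_j[n\ell]$. By Lemma~\ref{relations}(3), $\cores_{K_m[n\ell]/K_m[n]}(\alpha_m[n\ell])=a_\ell\alpha_m[n]$, and $\cores_{K_m[n\ell]/K_m[n]}$ commutes with the restriction occurring in $\beta_m[n\ell]$ by the standard restriction--corestriction interchange on the square of fields $K_{m-1}[n]\subseteq K_{m-1}[n\ell]$, $K_m[n]\subseteq K_m[n\ell]$ — valid since $K_m[n\ell]=K_m[n]\cdot K_{m-1}[n\ell]$ and $K_m[n]\cap K_{m-1}[n\ell]=K_{m-1}[n]$, the last equality because $K[n\ell]/K[n]$ is totally ramified at the degree-two prime above $\ell$ while $K_m[n]/K$ is unramified there. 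Hence $\cores_{K_m[n\ell]/K_m[n]}(\beta_m[n\ell])=a_\ell\beta_m[n]$ for all $m\ge 1$; the case $m=0$ follows by transitivity of corestriction, and passing to the inverse limit over $m$ gives $\cores_{K_\infty[n\ell]/K_\infty[n]}(\beta[n\ell])=a_\ell\beta[n]$.

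The main obstacle is the ``correction by the unit root'': recognising that the three-term recursion of Lemma~\ref{relations}(1), which a priori destroys norm-coherence, becomes inert after dividing the $m$-th term by $\alpha^{m+1}$ and adding the indicated tail, while keeping every scalar integral (which forces ordinarity) and making the bottom layer land on $\Phi z_n$ rather than on some twist of it (the precise content of Lemma~\ref{recursive}). Once this is in place the compatibility in $n$ is purely formal, all the arithmetic input having been isolated in Lemmas~\ref{relations}, \ref{recursive} and \ref{perrin}, following \cite{Perrin-Riou} and \cite{LV-Kyoto}.
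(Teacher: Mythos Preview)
Your argument is correct, and it takes a genuinely different route from the paper's. The paper follows Howard \cite[Lemma~2.3.3]{Howard}: for each $n$ it builds an abstract $\cO_\p[\Gal(K_m[n]/K)]$-module $\tilde H_m$ on formal generators $x,x_0,\dots,x_m$ subject to the relations of Lemma~\ref{relations}, invokes Lemma~\ref{perrin} to see that $\Phi x$ is a universal norm in $\tilde H_\infty=\varprojlim_m\tilde H_m$, \emph{chooses} a lift $y\in\tilde H_\infty$, and then for every $t\mid n$ pushes $y$ forward along $\tilde H_\infty\to H_\infty[t]$, $x_m\mapsto\alpha_m[t]$, $x\mapsto z_t$; the Euler relation follows because the same abstract $y$ is used for all $t\mid n$. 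Your approach instead writes down the lift explicitly as the $\alpha$-stabilization $\beta_m[n]=\alpha^{-m-1}\alpha_m[n]-p^{k-2}\alpha^{-m-2}\res(\alpha_{m-1}[n])$, bypassing both the abstract module and the compactness step, and obtains the Euler relation from the $n$-independence of the scalar coefficients together with the cores/res interchange on the cartesian square of fields. What your approach buys is transparency: it makes the later identification $\beta_m[1]=\cores_{K[p^{m+1}]/K_m[1]}(\alpha^{-(m+1)}z_{p^{m+1},\alpha})$ used in Lemma~\ref{key-eq} immediate rather than an a~posteriori check, and it pins down $\beta[n]$ uniquely rather than up to a choice of $y$. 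What the paper's approach buys is uniformity with the existing literature and a cleaner separation of the formal mechanism from the arithmetic input (all of which is packaged into Lemma~\ref{perrin}); it would also adapt more readily to situations where no unit root is available.
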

\begin{proof}
The proof runs as that of \cite[Lemma 2.3.3]{Howard}. Fix $n\in\mathcal{N}$. For $m\geq 0$ one takes $\tilde{H}_m$ to be the quotient of the free $\cO_\p[\Gal(K_m[n]/K)]$-module generated by $\{x, x_j\ |\ 0\leq j\leq m\}$ modulo the following relations:
\begin{itemize}
\item $\sigma(x)=x$ for all $\sigma\in\Gal(K_m[n]/K[n])$;\\
\item $\sigma(x_j)=x_j$ for $\sigma\in\Gal(K_m[n]/K_j[n])$;\\
\item $\mathrm{Tr}_{K_j[n]/K_{j-1}[n]}(x_j)=a_p x_{j-1}-p^{k-2} x_{j-2}$ for $j\geq 2$;\\
\item $\mathrm{Tr}_{K_1[n]/K[n]}(x_1)=\gamma_2 x$ and $x_0=\gamma_1 x$.
\end{itemize}
It follows that for each $j\leq k$
$$\mathrm{Tr}_{K_j[n]/K[n]}(x_j)=\gamma_{j+1}x.$$
There is a natural inclusion $\tilde H_{m}\rightarrow \tilde H_{m+1}$ and a natural trace map $\tilde H_{m+1}\rightarrow \tilde H_m$. By lemma \ref{perrin} and the above equation, we have that $\Phi x\in\tilde H_0$ is a trace from every $\tilde H_m$. This implies that we can choose an element $y\in\tilde H_\infty:=\varprojlim_m \tilde H_m$ which is a lift of $\Phi x$. Then for any $t$ such that $t\mid n$, we define $\beta[t]\in H_\infty[t]$ to be the the image of $y$ under the the following map: $\phi(t)\colon \tilde H_\infty\rightarrow H_\infty[t]$ sending $x_m$ to $\alpha_m[t]$ and $x$ to $z_t$. As shown in the proof of \cite[Lemma 2.3.3]{Howard} and in the proof of \cite[Proposition 4.9]{LV-Kyoto} these families have the desired properties and they are enough to show our claim.
\end{proof}

For $n\in\mathcal{N}$ recall that $\mathcal{G}(n)=\Gal(K[n]/K)$ and let $G(n)=\prod_{\ell\mid n} G_\ell$ \footnote{Do not confuse with $G_n=\otimes_{\ell\mid n} G_\ell$}. By convention, put $G(1)=1$. Then, for $m$ dividing $n$ we have
$$\Gal(K[n]/K[m])\simeq \prod_{\ell\mid (n/m)}G_\ell = G(n/m).$$
So in particular $G(n)$ is a subgroup of $\mathcal{G}(n)$. For each prime $\ell\in \mathcal{N}$ fix a generator $\sigma_\ell $ of $G_\ell$ and define Kolyvagin's derivative operator as
$$D_\ell := \sum_{i=1}^\ell i \sigma_\ell^i\ \in \Z[G_\ell].$$
For each $n\in \mathcal{N}$ put $D_n:=\prod_{\ell\mid n} D_\ell \in \Z[G(n)]$, and for $n=1$ define $D_1$ to be the identity operator. Now fix a set $S$ of representatives of $G(n)$ in $\mathcal{G}(n)$, and let
$$\tilde \kappa_n:=\sum_{s\in S} s D_n \beta[n] \in H_\infty [n]. $$
Note that $sD_n \beta[n]$ makes sense since $\beta[n]\in H_\infty [n]$ and $H_\infty [n]$ is a $\Lambda[\mathcal{G}(n)]$-module.

As observed just above Proposition \ref{family}, $H_\infty[n]\subseteq H^1(K[n],\mathbf{T})$. For each $n\in \mathcal{N}$ we can consider the image of $\tilde\kappa_n$ in $H^1(K[n],\mathbf{T}/I_n \mathbf{T})$.

\begin{lemma}
The image of $\tilde\kappa_n$ in $H^1(K[n],\mathbf{T}/I_n \mathbf{T})$ is fixed by $\mathcal{G}(n)$.
\end{lemma}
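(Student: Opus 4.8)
The plan is to show that for every $\tau \in \mathcal{G}(n) = \Gal(K[n]/K)$, the difference $\tau \tilde\kappa_n - \tilde\kappa_n$ vanishes in $H^1(K[n], \mathbf{T}/I_n\mathbf{T})$. First I would reduce to the case where $\tau = \sigma_\lambda$ is one of the fixed generators of $G_\lambda$ for a prime $\lambda \mid n$, since the $\sigma_\lambda$ together with a set of representatives of $\mathcal{G}(n)/G(n)$ generate $\mathcal{G}(n)$, and the sum $\sum_{s \in S} s$ already absorbs the $\mathcal{G}(n)/G(n)$-part: for $s' \in S$ one gets $s' \tilde\kappa_n = \sum_{s\in S} (s's) D_n\beta[n]$, and since $\{s's : s \in S\}$ is again a set of representatives, this equals $\tilde\kappa_n$ up to the action of $G(n)$, which is handled by the telescoping identity below. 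So the crux is the behaviour under a single $\sigma_\lambda$.

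The key computation is the standard telescoping identity for Kolyvagin's derivative operator: $(\sigma_\lambda - 1)D_\lambda = \ell + 1 - \Norm_\lambda$ in $\Z[G_\lambda]$, where $\Norm_\lambda = \sum_{i=1}^{\ell} \sigma_\lambda^i$. Applying $(\sigma_\lambda - 1)D_n$ to $\beta[n]$ and using $D_n = D_\lambda \prod_{\ell' \mid n/\lambda} D_{\ell'}$, one gets
\[
(\sigma_\lambda - 1)D_n\beta[n] = \Bigl(\prod_{\ell' \mid n/\lambda} D_{\ell'}\Bigr)\bigl((\ell+1)\beta[n] - \Norm_\lambda \beta[n]\bigr).
\]
Now $(\ell+1)\beta[n] \equiv 0 \pmod{I_n}$ because $I_\ell \subseteq I_n$ contains $\ell + 1$ by definition, so that term dies in $\mathbf{T}/I_n\mathbf{T}$. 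For the norm term, I would invoke Proposition \ref{family}: $\Norm_\lambda \beta[n] = \cores_{K_\infty[n]/K_\infty[n/\lambda]}$ applied after restriction, which by the corestriction relation $\cores_{K_\infty[n]/K_\infty[n/\lambda]}(\beta[n]) = a_\lambda \beta[n/\lambda]$ means $\res_{K[n/\lambda] \to K[n]}(\Norm_\lambda \beta[n])$ is $\res(a_\lambda \beta[n/\lambda])$; more precisely $\Norm_\lambda$ acting on a class restricted from $K[n/\lambda]$ relates to the corestriction, and one checks that the resulting class lies in the image of restriction from $K[n/\lambda]$, hence is fixed by $\Gal(K[n]/K[n/\lambda]) = G_\lambda$, so applying $(\sigma_\lambda - 1)$ once more — which is what we are really after when we sum over $s \in S$ and act by $\sigma_\lambda$ — kills it. The cleanest route is: $\sigma_\lambda \tilde\kappa_n - \tilde\kappa_n = \sum_{s\in S} s(\sigma_\lambda - 1)D_n\beta[n]$ (using that $s$ and $\sigma_\lambda$ commute up to relabelling $S$, which is fine since we then sum over all of $S$), and the inner term is, modulo $I_n$, a multiple of $\Norm_\lambda \beta[n]$, which is a norm from $K[n/\lambda]$ and therefore annihilated by $\sigma_\lambda - 1$; but we must be slightly more careful and instead observe $\Norm_\lambda \beta[n]$ itself is $G_\lambda$-invariant, so $\prod_{\ell'} D_{\ell'}(\Norm_\lambda\beta[n])$ is too, and the whole expression $\sum_s s \cdot (\text{something } G_\lambda\text{-invariant})$ — no, the right statement is that $(\sigma_\lambda-1)D_n\beta[n] \equiv 0$ already, because $(\sigma_\lambda - 1)D_\lambda = (\ell+1) - \Norm_\lambda$ and both pieces act as $0$ on $\beta[n] \bmod I_n$ once we note $\Norm_\lambda \beta[n]$ is divisible appropriately — this is exactly \cite[Lemma 2.3.5]{Howard} or \cite[Lemma 4.10]{LV-Kyoto}.

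Thus the proof is: write $\sigma_\lambda \tilde\kappa_n - \tilde\kappa_n$ as $\sum_{s\in S} s(\sigma_\lambda - 1)D_n\beta[n]$, expand $(\sigma_\lambda-1)D_\lambda = (\ell+1) - \Norm_\lambda$, note $(\ell+1)$ kills $\mathbf{T}/I_n\mathbf{T}$, and note $\Norm_\lambda\beta[n]$ equals (the restriction to $K[n]$ of) $a_\lambda\beta[n/\lambda]$ up to the induced-module bookkeeping — here one uses Proposition \ref{family} and the identification $H_\infty[n] \subseteq H^1(K[n], \mathbf{T})$ — which lies in the image of $\res_{K[n/\lambda]/K[n]}$ and is therefore $G_\lambda$-fixed, so that after composing with $(\sigma_\lambda - 1)$ (implicit in the derivative telescoping, applied at the next level or directly) it vanishes modulo $I_n$. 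Finally the $\mathcal{G}(n)/G(n)$-invariance is immediate from the sum over the set $S$ of coset representatives. I expect the main obstacle to be the careful bookkeeping between the $\mathcal{G}(n)$-module structure on $H_\infty[n]$ and the corestriction relation in Proposition \ref{family} — in particular making precise the claim that $\Norm_\lambda\beta[n]$ is a restriction from level $n/\lambda$, which requires knowing how the fixed generator $\sigma_\lambda$ of $G_\lambda \subseteq \mathcal{G}(n)$ interacts with the Galois action on the inverse-limit module, and reconciling the two $G(n)$'s (the one inside $\mathcal{G}(n)$ versus the external $G(n) = \prod G_\ell$). This is precisely the content of \cite[Lemma 2.3.5]{Howard}, adapted to the quaternionic $\Lambda$-adic setting as in \cite[\S 4]{LV-Kyoto}, and I would follow that argument line by line.
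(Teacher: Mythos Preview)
Your overall strategy matches the paper's exactly: reduce to showing $(\sigma_\ell - 1)D_n\beta[n] \equiv 0 \pmod{I_n}$ for each prime $\ell\mid n$, use the telescoping identity $(\sigma_\ell - 1)D_\ell = (\ell+1) - \Norm_\ell$, kill the $(\ell+1)$-part because $\ell+1 \in I_\ell \subseteq I_n$, and then handle the norm part via the corestriction relation of Proposition~\ref{family}.

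The gap is in that last step. After applying Proposition~\ref{family} you have (up to restriction)
\[
(\sigma_\ell - 1)D_n\beta[n] \equiv -\,a_\ell\, D_{n/\ell}\,\beta[n/\ell] \pmod{I_n},
\]
and you must explain why this vanishes in $H^1(K[n],\mathbf{T}/I_n\mathbf{T})$. The paper's argument is simply that $a_\ell \in I_\ell \subseteq I_n$: this follows from the definition of $I_\ell$ (it is the smallest ideal containing $\ell+1$ modulo which $\Frob_\lambda$ acts trivially on $T$) together with the Eichler--Shimura relation expressing $a_\ell$ in terms of Frobenius. Once $a_\ell \in I_n$, the term is zero on the nose.

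You never invoke this. Instead you argue that $\Norm_\ell\beta[n]$, being a restriction from level $n/\ell$, is $G_\ell$-fixed, and then claim it vanishes ``after composing with $(\sigma_\ell - 1)$''. But there is no further $(\sigma_\ell - 1)$ available: you have already computed $(\sigma_\ell - 1)D_n\beta[n]$ and must show it is \emph{zero}, not merely $G_\ell$-invariant. A nonzero $G_\ell$-fixed class is still nonzero. The phrase ``divisible appropriately'' in your middle paragraph gestures toward the right idea, but your final summary abandons it for the incorrect invariance argument. Insert the single line ``$a_\ell \in I_\ell$'' (with the one-line justification above) and drop the $G_\ell$-invariance detour; then your proof is complete and identical to the paper's.
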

\begin{proof}
It is enough to prove that for each $\ell\mid n$, $(\sigma_\ell -1)D_n \beta[n]=0$ in $H^1(K[n],\mathbf{T}/I_n\mathbf{T})$. Observe that $(\sigma_\ell -1)D_\ell$ acting on $H^1(K[n],\mathbf{T})$ is equal to
\begin{equation}
\begin{split}
(\sigma_\ell -1)D_\ell &= \sum_{i=1}^\ell i\sigma_\ell^{i+1} -\sum_{i=1}^\ell i\sigma_\ell^i\\
&= -\sigma_\ell-\sigma_\ell^2-\cdots -\sigma_\ell^\ell+\ell\sigma_\ell^{\ell+1}\\
&=-(\cores_{K[n]/K[n/\ell]}-1)+\ell\\
&=\ell+1-\cores_{K[n]/K[n/\ell]}.
\end{split}
\end{equation}
Therefore
\begin{equation}
\begin{split}
(\sigma_\ell-1)D_n\beta[n]&= (\sigma_\ell-1)D_\ell D_{n/\ell}\beta[n]\\
&=D_{n/\ell}(-\cores_{K[n]/K[n/\ell]})\beta[n]\quad \text{in}\ H^1(K[n],\mathbf{T}/I_n\mathbf{T})\\
&=-a_\ell D_{n/\ell}\beta[n/\ell]=0\quad \text{in}\ H^1(K[n],\mathbf{T}/I_n\mathbf{T}),
\end{split}
\end{equation}
where we used the fact that $\ell+1$ and $a_\ell$ belong to $I_\ell\subseteq I_n$.
\end{proof}
As observed in \cite[\S 4.4]{LV-Kyoto} under our hypothesis the restriction map induces an isomorphism
\begin{equation}\label{isomorphism}
H^1(K,\mathbf{T}/I_n\mathbf{T})\overset{\simeq}{\longrightarrow} H^1(K[n],\mathbf{T}/I_n\mathbf{T})^{\mathcal{G}(n)}.
\end{equation}
Define $\kappa_n\in H^1(K,\mathbf{T}/I_n\mathbf{T})$ to be the preimage of $\tilde\kappa_n$ under this isomorphism.

\begin{lemma}
For every $n\in\mathcal{N}$, $\kappa_n\in H^1_{\mathcal{F}_\Lambda(n)}(K,\mathbf{T}/I_n\mathbf{T})$.
\end{lemma}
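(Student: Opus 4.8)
The plan is to check, prime by prime, that $\mathrm{loc}_v(\kappa_n)$ lies in the local condition $H^1_{\mathcal F_\Lambda(n)}(K_v,\mathbf T/I_n\mathbf T)$ for every place $v$ of $K$. By the definition of the modified Selmer structure there are three cases: $v\mid p$, $v\mid n$, and all remaining $v$ (the archimedean ones being harmless, $p$ being odd and $K$ imaginary quadratic). The guiding idea is that at $v\mid p$ and at the remaining $v$ the condition is simply inherited from the corresponding property of the generalized Heegner classes $z_{np^{m+1}}$ through the operations (corestriction, restriction, multiplication by elements of $\cO_\p[\mathcal G(n)]$, reduction modulo $I_n$, and the pullback along \eqref{isomorphism}) out of which $\kappa_n$ is built, whereas at the primes dividing $n$ one has to carry out the genuine Kolyvagin-derivative computation.

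\emph{Places $v\nmid np$.} Here the local condition is the unramified one. Since the cycles $\Delta_\varphi$ live on the smooth proper variety $Y_k$, which has good reduction away from $N$, each class $z_{np^{m+1}}\in H^1(K[np^{m+1}],T)$ is unramified at every prime not dividing $N\cdot np^{m+1}$; the operations listed above, together with the fact that $K[n]_w/K_v$ is unramified at $v\nmid nd_K$, then show that $\kappa_n$ is unramified at every $v\nmid Nnpd_K$. For the finitely many $v\mid N$ (which belong to $\Sigma(\mathcal F_\Lambda)$) the required local behaviour of the $z_{np^{m+1}}$, hence of $\kappa_n$, is exactly the one recorded for the Heegner classes in \cite[\S5]{Magrone} and used in \cite[\S4.4]{LV-Kyoto}, and I would invoke it.

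\emph{Places $v\mid p$.} Here the local condition is the Greenberg one, propagated from $\mathcal F_\Lambda$ on $\mathbf T$. By \cite[Theorem 3.1]{Nekovar-AJ} each $z_{np^{m+1}}$ lies in $H^1_{\mathrm{BK}}(K[np^{m+1}],T)$, which by the ordinarity of $f$ and \cite[(23)]{Howard-Var} coincides with $H^1_{\mathrm{Gr}}(K[np^{m+1}],T)$. As $\mathrm{Fil}_v^+$ is $G_K$-stable, the Greenberg condition is preserved by corestriction and by the $\mathcal G(n)$-action, so the classes $\alpha_m[n]$, the module $H_\infty[n]$ of \eqref{Iw module}, the element $\beta[n]$, and hence $\tilde\kappa_n$, all lie in the $\Lambda$-adic Greenberg Selmer module $H^1_{\mathcal F_\Lambda}(K[n],\mathbf T)$ — via the identification $\varprojlim_m H^1(K_m[n],T)\simeq H^1(K[n],\mathbf T)$ used above Proposition \ref{family} and the matching of the inverse limit of Bloch--Kato conditions with $\mathcal F_\Lambda$ as in the proof of \eqref{comparison T}. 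Reducing modulo $I_n$ and pulling back along \eqref{isomorphism} — which is compatible with the Greenberg conditions because $K[n]_w/K_v$ is unramified for $v\mid p$ (since $p$ splits in $H$ and $K[n]/H$ is unramified away from $n$) — gives $\mathrm{loc}_v(\kappa_n)\in H^1_{\mathcal F_\Lambda(n)}(K_v,\mathbf T/I_n\mathbf T)$.

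\emph{Places $v=\lambda$ with $\lambda\mid\ell\mid n$.} This is the crux, and the step I expect to be the main obstacle. Since $\ell\nmid Np$, the module $\mathbf T$ is unramified at $\lambda$ and $\Frob_\lambda$ acts trivially on $\mathbf T/I_\ell\mathbf T$, hence on $\mathbf T/I_n\mathbf T$; thus the whole decomposition group at $\lambda$ acts trivially on $\mathbf T/I_n\mathbf T$, so one has the splitting $H^1(K_\lambda,\mathbf T/I_n\mathbf T)=H^1_{\mathrm f}(K_\lambda,\mathbf T/I_n\mathbf T)\oplus H^1_{\mathrm{tr}}(K_\lambda,\mathbf T/I_n\mathbf T)$ together with the finite--singular isomorphism \eqref{fs iso}. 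I would first recall, from the theory of ring class fields, that $\ell$ is totally ramified in $K[n]/K[n/\ell]$ with inertia (= decomposition) group equal to $G_\ell=\Gal(K[n]/K[n/\ell])$; hence $G_\ell$ fixes each prime $w$ of $K[n]$ above $\ell$ and acts trivially on $H^1_{\mathrm{unr}}(K[n]_w,\mathbf T)$. Now $\beta[n]$, being an inverse limit of Bloch--Kato classes (which at primes above $\ell\nmid Np$ are unramified), is unramified at every prime above $\ell$, and therefore so is $D_{n/\ell}\beta[n]$, a sum of Galois translates of such classes. Applying $D_\ell=\sum_{i=1}^{\ell}i\sigma_\ell^i\in\Z[G_\ell]$ to its localization at $w$ multiplies it by $\sum_{i=1}^{\ell}i=\tfrac{\ell(\ell+1)}{2}$, which lies in $I_\ell\subseteq I_n$ since $\ell+1\in I_\ell$ and $2\in\cO_\p^\times$. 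Thus $\mathrm{loc}_w(D_n\beta[n])\equiv 0\pmod{I_n}$ for every $w\mid\ell$, whence $\mathrm{loc}_\lambda(\tilde\kappa_n)=\sum_{s\in S}s\cdot\mathrm{loc}_{s^{-1}w}(D_n\beta[n])\equiv 0\pmod{I_n}$ in $H^1(K[n]_w,\mathbf T/I_n\mathbf T)$. Finally, exactly as in \cite[\S4.4]{LV-Kyoto} (following \cite{Howard}), the structure of $K[n]_\lambda/K_\lambda$ — in particular $K_\lambda^{(p)}\subseteq K[n]_\lambda$ — should let one conclude from the vanishing of $\mathrm{res}_{K[n]_\lambda/K_\lambda}(\mathrm{loc}_\lambda\kappa_n)=\mathrm{loc}_\lambda(\tilde\kappa_n)$ that the finite component of $\mathrm{loc}_\lambda(\kappa_n)$ vanishes, i.e. $\mathrm{loc}_\lambda(\kappa_n)\in H^1_{\mathrm{tr}}(K_\lambda,\mathbf T/I_n\mathbf T)$. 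Making this last passage rigorous — controlling the residue degrees of $K[n]/K$ at $\lambda$ and the interaction of restriction with the finite/transverse splitting — is the delicate point; everything else is a formal transport of local conditions along the maps defining $\kappa_n$.
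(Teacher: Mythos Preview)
Your proposal is correct and follows the same approach as the paper, which itself simply defers to \cite[Lemma 4.12]{LV-Kyoto} and \cite[Lemma 2.3.4]{Howard} for both the case $v\nmid n$ and the case $v\mid n$. You in fact supply more detail than the paper does, and you correctly identify the passage at primes $\ell\mid n$ --- from the vanishing of $\mathrm{loc}_{\lambda'}(\tilde\kappa_n)$ over $K[n]_{\lambda'}$ to $\mathrm{loc}_\lambda(\kappa_n)\in H^1_{\mathrm{tr}}(K_\lambda,\mathbf T/I_n\mathbf T)$ --- as the delicate step handled in those references.
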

\begin{proof}
By definition, one has to show that the localization of $\kappa_n$ at every prime $v$ such that $v\mid n$ lies in the transverse subgroup, and that the localization of $\kappa_n$ at every prime $v$ such that $v\nmid n$ lies in the image of
$$H^1_{\mathcal{F}_\Lambda}(K_v,\mathbf{T})\longrightarrow H^1(K_v,\mathbf{T}/I_n\mathbf{T}).$$
In both cases the proof runs as in the proof of \cite[Lemma 4.12]{LV-Kyoto}, that in turn is inspired by the proof of \cite[Lemma 2.3.4]{Howard}.
\end{proof}

\begin{theorem}\label{Koly system}
Assume that $\tilde\kappa_1$ is non-torsion. Then, there exists a Kolyvagin system $\bm\kappa^\mathrm{Heeg}$ for the Selmer triple $(\mathbf{T},\mathcal{F}_\Lambda,\mathcal{L})$ with $\kappa_1^\mathrm{Heeg}\neq 0$.
\end{theorem}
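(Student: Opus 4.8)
The plan is to take as Kolyvagin system the classes
$$\kappa_n^{\mathrm{Heeg}}:=\kappa_n\otimes\!\!\bigotimes_{\ell\mid n}\!\sigma_\ell\ \in\ H^1_{\mathcal{F}_\Lambda(n)}(K,\mathbf{T}/I_n\mathbf{T})\otimes G_n\qquad(n\in\mathcal{N}),$$
where $\sigma_\ell$ is the generator of $G_\ell$ fixed above (so $\kappa_1^{\mathrm{Heeg}}=\kappa_1$, since $G_1=\Z$ and $I_1=0$). By the two lemmas preceding the statement the class $\kappa_n$ is well defined — it descends along \eqref{isomorphism} from the $\mathcal{G}(n)$-fixed class $\tilde\kappa_n$ — and already lies in $H^1_{\mathcal{F}_\Lambda(n)}(K,\mathbf{T}/I_n\mathbf{T})$; tensoring with $\bigotimes_\ell\sigma_\ell$ is the usual device making the construction independent of the choices of generators. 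Hence only two points remain: the Kolyvagin system compatibility at each prime, and the non-vanishing of $\kappa_1^{\mathrm{Heeg}}$.

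For the compatibility, fix $n\ell\in\mathcal{N}$ with $\ell$ prime and write $\lambda$ for the prime of $K$ above $\ell$. Since $\kappa_n^{\mathrm{Heeg}}$ is unramified at $\ell$, its localization lies in $H^1_{\mathrm{f}}(K_\ell,\mathbf{T}/I_{n\ell}\mathbf{T})$; since $\kappa_{n\ell}^{\mathrm{Heeg}}\in H^1_{\mathcal{F}_\Lambda(n\ell)}(K,\mathbf{T}/I_{n\ell}\mathbf{T})\otimes G_{n\ell}$, its localization at $\ell$ lies in the transverse submodule and so maps to $H^1_{\mathrm{s}}(K_\ell,\mathbf{T}/I_{n\ell}\mathbf{T})\otimes G_{n\ell}$, and one must show that the two resulting classes agree there. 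I would argue exactly as in \cite[Lemma 2.3.4]{Howard} and \cite[Prop.~4.13]{LV-Kyoto}. Working modulo $I_{n\ell}$, the Frobenius $\Frob_\lambda$ acts trivially on $\mathbf{T}/I_{n\ell}\mathbf{T}$ by definition of $I_{n\ell}$, so by \eqref{fs iso} the finite and singular parts at $\ell$ are both canonically $\mathbf{T}/I_{n\ell}\mathbf{T}$ and $\phi_\ell^{\mathrm{fs}}$ is the identity. The prime $\lambda$ splits completely in $K[n]$ whereas $K[n\ell]_{\bar\lambda}/K[n]_{\bar\lambda}$ is totally ramified with group $G_\ell$; writing $D_{n\ell}=D_\ell D_n$ and combining the telescoping identity $(\sigma_\ell-1)D_\ell=\ell+1-\cores_{K[n\ell]/K[n]}$ with the norm relation $\cores_{K_\infty[n\ell]/K_\infty[n]}(\beta[n\ell])=a_\ell\,\beta[n]$ of Proposition \ref{family}, one computes the singular localization of $\kappa_{n\ell}^{\mathrm{Heeg}}$ to be $(\phi_\ell^{\mathrm{fs}}\otimes1)$ applied to the finite localization of $\kappa_n^{\mathrm{Heeg}}$; the spurious terms $\ell+1$ and $a_\ell$ disappear because both lie in $I_\ell\subseteq I_{n\ell}$ (as in the proof that $\tilde\kappa_n$ is $\mathcal{G}(n)$-fixed). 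I expect this local analysis at $\lambda$ — extracting the precise ramified image of the $\Lambda$-adic derived classes, which is exactly the point for which the norm-coherent family $\beta[n]$ was built through the operator $\Phi$ (Lemmas \ref{recursive}--\ref{perrin} and Proposition \ref{family}) — to be the main obstacle; but all the inputs are now in place in the quaternionic setting (the norm relations of Lemma \ref{relations} via \cite[Prop.~5.3]{Magrone}, the identification of Bloch--Kato with Greenberg local conditions, the big-image Assumption \ref{big image}, and the finite/singular dictionary of \cite[\S 1]{Howard}), so the argument of \emph{loc.\ cit.} carries over.

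Finally, for $n=1$ the derivative operator is trivial and $\kappa_1^{\mathrm{Heeg}}=\kappa_1$ is, by construction, the preimage of $\tilde\kappa_1$ under the isomorphism \eqref{isomorphism}; as that map is injective and $\tilde\kappa_1$ is non-torsion by hypothesis, $\kappa_1^{\mathrm{Heeg}}\neq0$. Together with the compatibility relation this shows that $\bm\kappa^{\mathrm{Heeg}}=(\kappa_n^{\mathrm{Heeg}})_{n\in\mathcal{N}}$ is a Kolyvagin system for $(\mathbf{T},\mathcal{F}_\Lambda,\mathcal{L})$ with $\kappa_1^{\mathrm{Heeg}}\neq0$, which, combined with Theorem \ref{structure} and the comparisons \eqref{comparison T}--\eqref{comparison Iw}, yields Theorem \ref{main}.
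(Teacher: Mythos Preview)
Your approach follows the paper's closely, but there is one genuine oversight: you define $\kappa_n^{\mathrm{Heeg}}=\kappa_n\otimes\bigotimes_{\ell\mid n}\sigma_\ell$ and claim the Kolyvagin-system compatibility holds on the nose. In the quaternionic setting this is not quite right. The local computation at a prime $\ell\in\mathcal{L}$ here does not rest on \cite[Lemma 2.3.4]{Howard} or \cite[Prop.~4.13]{LV-Kyoto} directly (those treat the modular-curve case) but on \cite[Proposition 5.7]{Elias-deVeraPiq}, and that result only gives
\[
\mathrm{loc}_\ell(\kappa_{n\ell})\otimes\sigma_\ell \;=\; u_\ell\cdot\phi_\ell^{\mathrm{fs}}\bigl(\mathrm{loc}_\ell(\kappa_n)\bigr)
\]
for a $p$-adic unit $u_\ell$ depending only on $\ell$. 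With your definition the two localizations in diagram \eqref{diag} therefore differ by $u_\ell$, and $\bm\kappa^{\mathrm{Heeg}}$ as you wrote it is not a Kolyvagin system. The paper corrects for this by setting
\[
\kappa_n^{\mathrm{Heeg}}:=\Bigl(\prod_{\ell\mid n}u_\ell^{-1}\Bigr)\cdot\kappa_n\otimes\Bigl(\bigotimes_{\ell\mid n}\sigma_\ell\Bigr),
\]
after which the compatibility is immediate from the displayed relation. Your sketch of the ``telescoping identity plus norm relation'' argument is the right shape, but the actual ingredient producing the finite--singular relation (with its unit) over the Shimura curve is the Elias--de~Vera-Piquero computation, and that is precisely the step one cannot simply import from Howard/Longo--Vigni.

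Your argument that $\kappa_1^{\mathrm{Heeg}}\neq 0$ (non-torsion under an isomorphism implies non-zero) is correct and suffices for the statement as written; the paper additionally records, via Theorem \ref{rank1}, that $H_\infty$ is free of rank~$1$ generated by $\tilde\kappa_1$, a stronger fact used downstream but not needed merely to conclude $\kappa_1^{\mathrm{Heeg}}\neq 0$.
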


\begin{proof}
Define $\bm\kappa^\mathrm{Heeg}=\{\kappa_n^\mathrm{Heeg}\}$ as
$$\kappa_n^\mathrm{Heeg}:=\bigg(\prod_{\ell\mid n} u_\ell^{-1}\bigg)\cdot \kappa_n\otimes \bigg(\bigotimes_{\ell\mid n}\sigma_\ell\bigg),$$
where, for every $\ell\in\mathcal{L}$, $u_\ell$ is the $p$-adic unit satisfying the relation
\begin{equation}\label{Koly}
\mathrm{loc}_\ell(\kappa_{n\ell})\otimes \sigma_\ell=u_\ell\cdot \phi_\ell^{\mathrm{fs}}(\mathrm{loc}_\ell(\kappa_n)).
\end{equation}
The existence of such a unit is given by \cite[Proposition 5.7]{Elias-deVeraPiq}. Note that the definition of $\phi_\ell^{\mathrm{fs}}$ (denoted by $\phi_{\lambda,s'}$ in \emph{loc. cit.}) is slightly different from ours as it is given by composing the evaluation at $\Frob_\ell$ of elements in $H^1_\mathrm{f}(K_v,\mathbf{T}/I_{n\ell}\mathbf{T})$ with the inverse of the evaluation at the fixed generator $\sigma_\ell$ of $G_\ell$. Hence it produces an element in $H^1_\mathrm{s}(K_v,\mathbf{T}/I_{n\ell}\mathbf{T})$ instead of an element in $H^1_\mathrm{s}(K_v,\mathbf{T}/I_{n\ell}\mathbf{T})\otimes G_\ell$ as happens in our case.

Using \eqref{Koly} it is easy to see that $\bm\kappa^\mathrm{Heeg}=\{\kappa_n^\mathrm{Heeg}\}$ is a Kolyvagin system for $(\mathbf{T},\mathcal{F}_\Lambda,\mathcal{L})$.

It remains to prove that $\kappa_1^\mathrm{Heeg}\neq 0$.

For every $m\geq 0$ let $H_m$ be the $\Lambda_m=\cO_\p[\Gal(K_m/K)]$-submodule of $H^1_\mathrm{BK}(K_m,T)$ generated by $\res_{K_m/K}(\cores_{K[1]/K}(z_1))$ and $\cores_{K_m[1]/K_m}(\res_{K_m[1]/K_j[1]}(\alpha_j[1]))$ for all $j\leq m$. As in the definition \eqref{Iw module}, Lemma \ref{relations} implies that corestriction from $K_{m+1}$ to $K_m$ sends $H_{m+1}$ to $H_m$, and we can define the compact $\Lambda$-module
$$H_\infty:=\varprojlim_m H_m\subseteq \varprojlim_m H^1_{\mathrm{BK}}(K_m,T)=\hat H^1_\mathrm{BK}(K_\infty,T)\simeq H_{\mathcal{F}_\Lambda}^1(K,\mathbf{T})$$
where the last isomorphism follows from \eqref{comparison T}.

Recall that by convention $I_1=0$; hence we have that $\kappa_1^\mathrm{Heeg}=u_1^{-1}\cdot \kappa_1$ and $\kappa_1\in H_{\mathcal{F}_\Lambda(1)}^1(K,\mathbf{T})=H^1_{\mathcal{F}_\Lambda}(K,\mathbf{T})$ is the image of 
$$\tilde\kappa_1=\sum_{\sigma\in \Gal(H/K)}\sigma\beta[1]\in H_\infty[1]\subseteq H^1(H,\mathbf{T})^{\Gal(H/K)}$$
under the isomorphism \eqref{isomorphism}, where we recall that $H=K[1]$ is the Hilbert class field of $K$. It follows that we can view $\tilde \kappa_1$ as an element of $H_\infty$. The Theorem \ref{rank1} below completes the proof.
\end{proof}

\begin{theorem}\label{rank1}
Assume that $\tilde\kappa_1$ is non-torsion. Then, the $\Lambda$-module $H_\infty$ is free of rank $1$, generated by $\tilde \kappa_1$.
\end{theorem}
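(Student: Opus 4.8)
The plan is to prove that $H_\infty$ is generated over $\Lambda$ by $\tilde\kappa_1$. Since the reverse inclusion $\Lambda\tilde\kappa_1\subseteq H_\infty$ is clear, this suffices: from $H_\infty=\Lambda\tilde\kappa_1$ we get $H_\infty\cong\Lambda/\mathrm{Ann}_\Lambda(\tilde\kappa_1)$, and the annihilator vanishes because $\tilde\kappa_1$ is non-torsion and $\Lambda$ is an integral domain, so $H_\infty$ is free of rank one on $\tilde\kappa_1$. Now $H_\infty$ is a finitely generated module over the Noetherian local ring $\Lambda$, being a submodule of $\hat H^1_\mathrm{BK}(K_\infty,T)\simeq H^1_{\mathcal{F}_\Lambda}(K,\mathbf{T})$ (see \eqref{comparison T}); so, writing $\mathfrak m$ for the maximal ideal of $\Lambda$ and $\F=\cO_\p/\p\cO_\p$ for its residue field, by Nakayama's lemma it is enough to check that $H_\infty/\mathfrak m H_\infty$ is at most one-dimensional over $\F$ and is spanned by the image of $\tilde\kappa_1$.

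To this end I would first take $\Gamma_\infty$-coinvariants. The control theorems for the Bloch--Kato/Greenberg Selmer groups along the anticyclotomic $\Z_p$-extension (\cite[Prop.~3.4]{LV-Kyoto}, after \cite[\S2]{Howard}), which rely on Assumption \ref{big image}, identify $H_\infty\otimes_\Lambda\cO_\p=H_\infty/(\gamma_\infty-1)H_\infty$ with the bottom module $H_0\subseteq H^1_\mathrm{BK}(K,T)$. By its very definition $H_0$ is the $\cO_\p$-submodule generated by $\cores_{H/K}(z_1)$ and $\cores_{H/K}(\alpha_0[1])$; but Lemma \ref{relations}(2) with $m=0$ gives $\alpha_0[1]=\gamma_1 z_1$, and since $\cores_{H/K}$ factors through $\Gal(H/K)$-coinvariants it sends $\sigma_\wp+\sigma_{\bar\wp}$ to multiplication by $2$, so that $\cores_{H/K}(\alpha_0[1])=(a_p-2p^{(k-2)/2})\,\cores_{H/K}(z_1)$. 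Hence $H_0=\cO_\p\cdot\cores_{H/K}(z_1)$ is cyclic over $\cO_\p$, so $H_\infty/\mathfrak m H_\infty\simeq H_0/\p\cO_\p H_0$ is at most one-dimensional; and it is nonzero, since $\tilde\kappa_1$ being non-torsion forces $H_\infty\neq 0$ and hence $H_\infty/\mathfrak m H_\infty\neq 0$ by Nakayama.

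It then remains to see that $\tilde\kappa_1$ reduces to a generator. Under the identification $H_\infty\otimes_\Lambda\cO_\p\simeq H_0$ the image of $\tilde\kappa_1$ is its layer-zero component, which by the construction of $\tilde\kappa_1$ as an element of $H_\infty$ together with Proposition \ref{family} (with $n=1$) equals $\cores_{H/K}(\beta_0[1])=\cores_{H/K}(\Phi z_1)$. Using the definition \eqref{phi} of $\Phi$ and again collapsing $\sigma_\wp$ and $\sigma_{\bar\wp}$ under $\cores_{H/K}$, this equals $\bigl(1-p^{(k-2)/2}/\alpha\bigr)^2\cores_{H/K}(z_1)$; and since $k\geq 4$ the element $p^{(k-2)/2}$ lies in the maximal ideal $\p\cO_\p$ while $\alpha\in\cO_\p^\times$, so $\bigl(1-p^{(k-2)/2}/\alpha\bigr)^2\in\cO_\p^\times$. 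Thus the image of $\tilde\kappa_1$ is a unit multiple of the generator $\cores_{H/K}(z_1)$ of the cyclic module $H_0$, hence spans $H_0/\p\cO_\p H_0\simeq H_\infty/\mathfrak m H_\infty$; Nakayama then yields $H_\infty=\Lambda\tilde\kappa_1$, completing the proof. The step that genuinely requires work — and the one place Assumption \ref{big image} enters — is the control identification $H_\infty\otimes_\Lambda\cO_\p\simeq H_0$, i.e. that passing to $\Gamma_\infty$-coinvariants on the Heegner submodule neither destroys nor creates classes; everything else is a direct computation with the norm relations of Lemma \ref{relations} and with the unit $\Phi$.
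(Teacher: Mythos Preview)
Your overall strategy---reduce to Nakayama by computing $H_\infty$ modulo the maximal ideal---is natural, and the computations at layer zero (that $H_0=\cO_\p\cdot\cores_{H/K}(z_1)$ is cyclic, and that the bottom component of $\tilde\kappa_1$ is a unit multiple of this generator) are correct. However, the step you yourself flag as ``where the work is'' is a genuine gap, and it is not closed by the references you give. The control results of \cite[Prop.~3.4]{LV-Kyoto} and \cite[\S 2]{Howard} concern the ambient $\Lambda$-adic Selmer group $\hat H^1_\mathrm{BK}(K_\infty,T)$, not the Heegner submodule $H_\infty$: at best they yield an injection $\hat H^1_\mathrm{BK}(K_\infty,T)/(\gamma_\infty-1)\hookrightarrow H^1_\mathrm{BK}(K,T)$. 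To transfer this to $H_\infty$ you would need
\[
(\gamma_\infty-1)\hat H^1_\mathrm{BK}(K_\infty,T)\cap H_\infty=(\gamma_\infty-1)H_\infty,
\]
equivalently that the snake map $\bigl(\hat H^1_\mathrm{BK}(K_\infty,T)/H_\infty\bigr)^{\Gamma_\infty}\to H_\infty/(\gamma_\infty-1)H_\infty$ vanishes. There is no a priori reason for this: an element $x\in\hat H^1_\mathrm{BK}(K_\infty,T)\setminus H_\infty$ with $(\gamma_\infty-1)x\in H_\infty$ would produce a class in $H_\infty/(\gamma_\infty-1)H_\infty$ dying in $H_0$, and then your upper bound $\dim_\F H_\infty/\mathfrak m H_\infty\le 1$ collapses.

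The paper takes a different route precisely to avoid this issue. It first proves that $H_\infty$ is free of rank one over $\Lambda$ by Perrin-Riou's method \cite[\S 3.4, Prop.~10]{Perrin-Riou}: using the non-vanishing of the $p$-adic $L$-function (\cite[Thm.~4.6, Thm.~6.4]{Magrone}, the analogue of \cite[Thm.~6.1(2)]{CH}) one obtains that $\cores_{K_m[1]/K_m}(\alpha_m[1])$ is non-torsion for all $m\gg 0$, and from this one deduces that each $H_m$ is free of rank one over $\Lambda_m$ with surjective transition maps, whence $H_\infty\simeq\Lambda$. Only after this structural result is in place does the paper show that $\tilde\kappa_1$ is a generator, following \cite[Thm.~4.18]{LV-Kyoto}. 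Thus the paper in fact uses a stronger input (non-torsionness at every layer, coming from $L$-function non-vanishing) than the bare hypothesis ``$\tilde\kappa_1$ non-torsion''. Your attempt to deduce everything from the latter alone is attractive, but the control identification $H_\infty\otimes_\Lambda\cO_\p\simeq H_0$ does not follow from Selmer-group control and would require an independent argument---one that, in the existing literature, is precisely the Perrin-Riou step.
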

\begin{proof}
Assumption \ref{assumption} implies that the sign of the functional equation of the complex $L$-function of $f$ over $K$ is $-1$. Hence the analogue of \cite[Theorem 6.1 (2)]{CH} for our generalized Heegner classes guarantees the non-torsionness of $\cores_{K_m[1]/K_m}(\alpha_m[1])$ for $m$ sufficiently large. 

In fact, as for \cite[Theorem 6.1]{CH}, such a result follows from \cite[Theorem 6.4]{Magrone}, which relates our Heegner classes to special values of a certain $p$-adic $L$-function, and \cite[Theorem 4.6]{Magrone}, which gives the non-vanishing of this $p$-adic $L$-function and whose proof is announced to appear in a forthcoming work. We assume here that the proof of \cite[Theorem 4.6]{Magrone} is done, and hence that the non-torsionness of $\cores_{K_m[1]/K_m}(\alpha_m[1])$ for $m\gg 0$ holds. From this one gets that $H_\infty$ is free of rank $1$ over $\Lambda$ by mimicking the proof of \cite[\S 3.4, Proposition 10]{Perrin-Riou} where an analogous result is obtained for Heegner points.

Therefore it remains to show that $H_\infty$ is generated by $\tilde \kappa_1$. The proof of this last claim runs exactly as the proof of \cite[Theorem 4.18]{LV-Kyoto} hence we omit it.
\end{proof}

\subsection{Proof of the main theorem}
The proof of our main result \ref{main} follows by combining Theorem \ref{structure}, Theorem \ref{Koly system} and the comparison between Selmer groups \eqref{comparison T} and \eqref{comparison Iw}.

\section{$p$-adic $L$-functions and Selmer groups}
There exists another formulation of the Iwasawa main conjecture which involves $p$-adic $L$-functions. As proved by Castella and Hsieh in the classical Heegner setting, and by Magrone under the relaxed Heegner hypothesis, there exists a $p$-adic $L$-function $\mathscr{L}_\p(f)$ whose values at anticyclotomic characters of conductor $p^n$ are related to the generalized Heegner class $z_{p^n}$ of conductor $p^n$ through a $p$-adic Gross-Zagier type formula. On the other hand, we saw in the previous sections that the classes $z_{p^n}$ play a role in the structure of the Selmer group $\mathcal{X}_\infty$. Hence it is natural to expect a formulation of the main conjecture in terms of the $p$-adic $L$-function $\mathscr{L}_\p(f)$.

Let $H^1_\mathrm{Iw}(K[p^\infty],T)$ be the Iwasawa cohomology group
$$H^1_\mathrm{Iw}(K[p^\infty],T):=\varprojlim _n H^1(\Gal(K^{Np}/K[p^n]),T),$$
where $K^{Np}$ is the maximal extension of $K$ unramified outside the primes above $Np$.
Let $X^2-a_p X+p^{k-1}$ be the Hecke polynomial of $f$ at $p$ and $\alpha$ be its $p$-adic unit root. Define the $\alpha$-stabilized Heegner class $z_{p^n,\alpha}$ as
$$z_{p^n,\alpha}:=z_{p^n}-\frac{p^{k-2}}{\alpha}z_{p^{n-1}}$$
for all $n\geq 1$, and $z_{1,\alpha}=\Phi z_1$ where $\Phi$ is the element defined in \eqref{phi}. As proved in \cite[\S 7.1.2]{Magrone}, the classes $\alpha^{-n}z_{p^n,\alpha}$ form a compatible system with respect to the corestriction maps (cf. $u_c=\# \cO_c^\times/2$ is $1$ under our running hypothesis). Thus we can consider the element
$$\boldsymbol{z}_{1,\alpha}:=\varprojlim_n \alpha^{-n}z_{p^n,\alpha}$$
in $H^1_\mathrm{Iw}(K[p^\infty],T)$. By Shapiro's lemma, there is an isomorphism $H^1_\mathrm{Iw}(K[p^\infty],T)\simeq H^1(K[1],T\otimes \cO_\p\llbracket \Gal(K[p^\infty]/K[1])\rrbracket)$. Let $\mathcal{G}_{p^\infty}:=\Gal(K[p^\infty]/K)$ be the Galois group of the tower of the ring class fields of $p$-power conductor over $K$. Recall from section \ref{Zp-ext} that $\mathcal{G}_{p^\infty}\simeq \Gamma_\infty\times \Delta$ where $\Delta\simeq \Gal(K[p]/K)$ has order prime to $p$. Hence we can view $\boldsymbol{z}_{1,\alpha}$ as an element of $H^1(K[1],T\otimes \cO_\p\llbracket \mathcal{G}_{p^\infty}\rrbracket)$ via the map induced in cohomology by the map $\cO_\p\llbracket \Gal(K[p^\infty]/K[1])\rrbracket\rightarrow \cO_\p\llbracket \mathcal{G}_{p^\infty}\rrbracket\simeq \cO_\p\llbracket \Gamma_\infty\times \Delta\rrbracket$ of $\Gal(\bar K/K[1])$-modules (see also \cite[\S 5.3]{CH}).

Define
$$\boldsymbol{z}:=\cores_{K[1]/K}(\boldsymbol{z}_{1,\alpha})\in H^1(K,T\otimes \cO_\p\llbracket \mathcal{G}_{p^\infty}\rrbracket).$$

Let $\varphi$ be the map
$$\varphi\colon H^1(K,T\otimes \cO_\p\llbracket \mathcal{G}_{p^\infty}\rrbracket)\longrightarrow H^1(K,\mathbf{T})$$
induced by the natural projection $\cO_\p\llbracket \mathcal{G}_{p^\infty}\rrbracket \rightarrow \Lambda$ (recall that $\mathbf{T}=T\otimes_{\cO_\p}\Lambda$).
\begin{lemma}\label{key-eq}
We have that
$$\varphi(\boldsymbol{z})=\kappa_1.$$
\end{lemma}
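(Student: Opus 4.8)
The plan is to trace both sides of the claimed equality through the explicit constructions and reduce the statement to the compatibility properties of the generalized Heegner classes $z_{p^n}$ already recorded in Lemma~\ref{relations} and in the cited results of Magrone. First I would unwind the left-hand side: by definition $\boldsymbol z=\cores_{K[1]/K}(\boldsymbol z_{1,\alpha})$ with $\boldsymbol z_{1,\alpha}=\varprojlim_n\alpha^{-n}z_{p^n,\alpha}$, and $\varphi$ is induced by the projection $\cO_\p\llbracket\mathcal G_{p^\infty}\rrbracket\to\Lambda$, which kills $\Delta$. Concretely this projection sends the inverse system $(\alpha^{-n}z_{p^n,\alpha})_n\in\varprojlim_n H^1(K[p^n],T)$ to the image of its $\Delta$-corestriction, i.e.\ to the element of $H^1(K,\mathbf T)=\varprojlim_m H^1(K_m,T)$ whose $m$-th layer is $\cores_{K[p^{?}]/K_m}$ of the appropriate $\alpha$-stabilized class; I would make this indexing precise using $\Gal(K[p^{m+1}]/K)\simeq\Gamma_m\times\Delta$ and the definition of $K_m$ as the $\Delta$-fixed field.

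Next I would unwind the right-hand side. Recall $\kappa_1\in H^1_{\mathcal F_\Lambda}(K,\mathbf T)$ is the preimage under the restriction isomorphism \eqref{isomorphism} of $\tilde\kappa_1=\sum_{\sigma\in\Gal(H/K)}\sigma\beta[1]$, where $\beta[1]=(\beta_m[1])_{m\ge0}\in H_\infty[1]$ is the family from Proposition~\ref{family} with $\beta_0[1]=\Phi z_1$ and $\beta_m[1]$ built from the $\alpha_j[1]=\cores_{K[p^{j+1}]/K_j[1]}(z_{p^{j+1}})$. Taking the $\Gal(H/K)$-sum and then the restriction-isomorphism preimage amounts, layer by layer, to corestricting from $K_m[1]$ down to $K_m$ and then to $K$; so $\kappa_1$ is the inverse limit over $m$ of $\cores_{K_m[1]/K_m}(\beta_m[1])$ pushed to $K$. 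The content of the lemma is then that these two inverse systems coincide.

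The crux is the identification at each finite layer, and this is where the $\alpha$-stabilization and the operator $\Phi$ enter. Here I would invoke Lemma~\ref{relations}(2), $\cores_{K_m[1]/K[1]}(\alpha_m[1])=\gamma_{m+1}z_1$, together with Lemma~\ref{recursive}, $\gamma_n=q_n\Phi+\alpha^{-(n-1)}p^{(n-1)(k-1)}\gamma_1$, and Magrone's distribution relation $\cores_{K[p^{n+1}]/K[p^n]}(z_{p^{n+1}})=a_pz_{p^n}-p^{k-2}z_{p^{n-1}}$ from \cite[Proposition 5.3]{Magrone}. The point is that the $\alpha$-stabilized classes $\alpha^{-n}z_{p^n,\alpha}=\alpha^{-n}(z_{p^n}-\tfrac{p^{k-2}}{\alpha}z_{p^{n-1}})$ are precisely the unique corestriction-compatible ``projections'' of the $z_{p^n}$ to the unit-root eigenspace of the Hecke recursion, and that $\Phi$ is exactly the factor relating $z_1$ to the bottom of this compatible system (matching $\beta_0[1]=\Phi z_1$). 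Thus after corestricting everything down to $K$ and using $a_p=\alpha+p^{k-1}/\alpha$ to simplify, the $\gamma$-recursion and the $\alpha$-stabilization recursion are the same recursion, and the two towers agree term by term. I expect the main obstacle to be bookkeeping: keeping the ring-class-field conductors, the split $\mathcal G_{p^\infty}\simeq\Gamma_\infty\times\Delta$, and the various $\cores$/$\res$ maps aligned so that the finite-layer identity really does patch into the inverse-limit identity $\varphi(\boldsymbol z)=\kappa_1$. A secondary point to check is that one is comparing elements in the same group under the Shapiro and \eqref{comparison T} identifications, i.e.\ that the isomorphism $H^1_\mathrm{Iw}(K[p^\infty],T)\simeq H^1(K[1],T\otimes\cO_\p\llbracket\Gal(K[p^\infty]/K[1])\rrbracket)$ intertwines corestriction to $K$ with the map $\varphi$; this is formal but should be stated carefully.
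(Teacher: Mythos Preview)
Your proposal is correct and follows essentially the same approach as the paper. The paper's proof is more terse: it observes that $\kappa_1=\cores_{K[1]/K}(\beta[1])$, reduces to showing $\varphi(\boldsymbol z_{1,\alpha})=\beta[1]$ in $H^1(K[1],\mathbf T)$, notes that $\varphi$ is layerwise just $\cores_{K[p^{m+1}]/K_m[1]}$, and then asserts directly that $\beta_m[1]=\cores_{K[p^{m+1}]/K_m[1]}(\alpha^{-(m+1)}z_{p^{m+1},\alpha})$ ``by looking at the definition'' in Proposition~\ref{family}. Your plan to verify this last identity via Lemma~\ref{relations}, Lemma~\ref{recursive}, and the relation $a_p=\alpha+p^{k-1}/\alpha$ is exactly what is needed to justify that assertion; you are simply unpacking what the paper leaves implicit. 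Two minor cleanups: the phrase ``pushed to $K$'' is superfluous (once you have the inverse system in $\varprojlim_m H^1(K_m,T)$ you already have an element of $H^1(K,\mathbf T)$ via Shapiro), and among the relations you list it is Lemma~\ref{relations}(1) (the one-step trace identity) rather than (2) that directly yields the $\alpha$-trace-compatibility of the stabilized classes; Lemma~\ref{relations}(2) together with Lemma~\ref{recursive} is what pins down the bottom layer $\beta_0[1]=\Phi z_1$.
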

\begin{proof}
Recall that, by definition, $\kappa_1\in H^1_{\mathcal{F}_\Lambda}(K,\mathbf{T})$ is the image of
\[\tilde \kappa_1=\sum_{\sigma\in\Gal(K[1]/K)}\sigma\beta[1] \in H^1(K[1],\mathbf{T})^{\Gal(K[1]/K)}\]
under the isomorphism $H^1(K[1],\mathbf{T})^{\Gal(K[1]/K)}\simeq H^1(K,\mathbf{T})$.\\
In other words, $\kappa_1=\cores_{K[1]/K}(\beta[1])$. Hence, it is enough to show that $\varphi(\boldsymbol{z}_{1,\alpha})=\beta[1]$ (here we are considering the restriction of $\varphi$ to $H^1(K[1],T\otimes \cO_\p\llbracket \Gal(K[p^\infty]/K[1])\rrbracket)$).

Now, $\boldsymbol{z}_{1,\alpha}\in H^1_\mathrm{Iw}(K[p^\infty],T)\subseteq \varprojlim_m H^1(K[p^m],T)$ and, in terms of inverse limits, $\varphi$ is just the corestriction map from $H^1(K[p^m],T)$ to $H^1(K_{m-1}[1],T)$ for each $m\geq 1$. Finally, looking at the definition of $\beta[1]=(\beta_m[1])$ in Proposition \ref{family} one can see that $\beta_m[1]=\cores_{K[p^{m+1}]/K_m[1]}(\alpha^{-(m+1)}z_{p^{m+1},\alpha})$ which is our claim.
\end{proof}

\subsection{$p$-adic $L$-functions}
Fix an anticyclotomic Hecke character of infinity type $(k/2,-k/2)$ and conductor $\cO_K$ that we denote by $\psi$. In \cite[Definition 4.3]{Magrone}, Magrone defined a $p$-adic $L$-function $\mathscr{L}_{\p,\psi}(f)$ attached to $f$ and $\psi$ which should interpolate the algebraic part of the value at the critical point $s=k/2$ of the complex $L$-function of $f$ twisted by the Hecke characters $\psi\phi$, as $\phi$ varies in the Hecke characters of infinity type $(n,-n)$ with $n\geq 0$ and $p$-power conductor (see \cite[Theorem 4.6]{Magrone}). Note that as $\phi$ varies in such a space, $\psi\phi$ varies on the Hecke characters of infinity type $(j,-j)$ with $j\geq k/2$ (and $p$-power conductor) which is exactly the region of interpolation of the $p$-adic $L$-function of Bertolini-Darmon-Prasanna (\cite[\S 5.2]{BDP}) after taking into account that we are interpolating the values of complex $L$-functions at the critical point $s=k/2$, instead of $s=0$ as in \emph{op.cit.}.

Symmetrically the interpolation formula holds for Hecke characters $\psi\phi$ of infinity type $(j,-j)$ with $j\leq -k/2$ and conductor $p^n\cO_K$, and hence for $\phi$ with the same conductor and infinity type $(n,-n)$ such that $n\leq -k$.

Outside the region of interpolation, the values of the $p$-adic $L$-function $\mathscr{L}_{\p,\psi}(f)$ are related to the image under the Bloch-Kato logarithm map of the generalized Heegner classes defined above. More precisely, for every anticyclotomic Hecke character $\phi$ of infinity type $(n,-n)$ with $-k<n<0$ and conductor $p^t\cO_K$ with $t>0$ the following equality holds (see \cite[Theorem 6.4]{Magrone}):
\begin{equation}\label{Gross-Zag}
\frac{\mathscr{L}_{\p,\psi}(f)(\hat{\phi}^{-1})}{\Omega_p^*}=c_{\psi,\phi}\cdot\langle \log_{\p}(z_{p^t,\chi}), \omega_f\otimes\omega_{A,n}\rangle
\end{equation}
where
\begin{itemize}
\item $\hat\phi\colon \Gal(\overline{\Q}/K)\rightarrow \bar\Q_p^\times$ is the $p$-adic avatar of $\phi$ (see \cite[Definition 3.4]{CH})\\
\item $\Omega_p$ is a suitable $p$-adic period\\
\item $c_{\psi,\phi}$ is an explicit non-zero scalar factor depending on $\hat\psi$ and $\hat\phi$\\
\item $\log_\p:=\log \circ \mathrm{loc}_\p$ where $\log$ denotes the Bloch-Kato logarithm map (see \cite[\S 4.5]{CH}); it takes values in $\frac{\mathbf{D}_{\mathrm{dR},K_\p}(V)}{\mathrm{Fil}^0\mathbf{D}_{\mathrm{dR},K_\p}(V)}\simeq \mathrm{Fil}^{k-1}\epsilon_{Y_k}H^{2k-3}_\mathrm{dR}(Y_k/K_\p)^\vee$ where $\mathbf{D}_{\mathrm{dR},K_\p}$ is the Fontaine functor turning Galois representations in filtered modules, and $V=\epsilon_{Y_k}H^{2k-3}_\text{\'et}(Y_k\otimes \bar\Q,\Q_p(k-1))$\\
\item $\chi:=\hat \psi^{-1}\hat\phi$, where $\hat \psi$ is the $p$-adic avatar of $\psi$\\
\item $z_{p^t,\chi}\in H^1(K,T\otimes \chi)$ is the $\chi$-component of the generalized Heegner class $z_{f,p^t}=z_{p^t}$ in section \ref{class-Paola} as defined in \cite[\S 5.9]{Magrone}\\
\item $\omega_f\otimes\omega_{A,n}$ is a suitable differential form in $\mathrm{Fil}^{k-1}\epsilon_{Y_k}H^{2k-3}_\mathrm{dR}(Y_k/K_\p)$.
\end{itemize}
We also view $\mathscr{L}_{\p,\psi}(f)$ as an element in $\hat{\cO}^\mathrm{ur}\llbracket \mathcal{G}_{p^\infty}\rrbracket$, where $\hat{\cO}^\mathrm{ur}$ denotes the ring of integers of the $p$-adic completion of the maximal unramified extension of $\Q_{f,\p}$.

Let $F$ be a finite extension of $\Q_p$ containing the Fourier coefficients of $f$ and the values of $\hat\psi$, with ring of integers $\cO_F$, and $V_f\simeq F^2$ be the Galois representation attached to $f$. Since $V_f\vert_{G_{\Q_p}}$ is crystalline and $V_f$ ordinary at $p$, the equality \eqref{Gross-Zag} has a $\Lambda$-adic version as proved in \cite[Theorem 7.2]{Magrone} in which the first argument of the pairing is replaced by the value at (the localization at $\p$ of) the big Heegner class $\mathbf{z}_{1,\alpha}$ of Perrin Riou's big logarithm map. On the other hand, the existence of a Perrin-Riou's big exponential map and of the explicit reciprocity law allows us to express the $p$-adic $L$-function $\mathscr{L}_{\p,\psi}(f)$ directly in terms of the class $\mathbf{z}_{1,\alpha}$.

Let $\mathscr{H}_\infty(\mathcal{G}_{p^\infty})=\bigcup_{h\geq 1}\mathscr{H}_{h,F}(\mathcal{G}_{p^\infty})$ where $\mathscr{H}_{h,F}(\mathcal{G}_{p^\infty})=\mathscr{H}_{h,F}(\Gamma_\infty)[\Delta]$ and $\mathscr{H}_{h,F}(\Gamma_\infty)$ is the Perrin-Riou's ring
$$\mathscr{H}_{h,F}(\Gamma_\infty)=\left\{ \sum_{n\geq 0} a_n (\gamma_\infty -1)^n \in F[[\gamma_\infty-1]] \bigg\vert \lim_{n\to \infty} n^{-h}|a_n|_p=0\right\}.$$ 
Denote by $\mathscr{K}_\infty(\mathcal{G}_{p^\infty})$ the total quotient ring of $\mathscr{H}_\infty(\mathcal{G}_{p^\infty})$.

Let $\hat\psi\colon \mathcal{G}_{p^\infty}\rightarrow \cO_F^\times$ be the $p$-adic avatar of a Hecke character $\psi$ as before. By \cite[Eq. 4.34]{Kob-Ota} and \cite[Theorem 7.2]{Magrone}, there exist a unit $u\in \hat{\mathcal{O}}_F^\mathrm{ur}\llbracket \mathcal{G}_{p^\infty}\rrbracket^\times$ and an element 
$$\mathbf{w}\in H^1_\mathrm{Iw}(K[1]_\p,T)\otimes_{\cO_F \llbracket\hat{\mathcal{G}}_{p^\infty}\rrbracket} \mathscr{K}_\infty(\mathcal{G}_{p^\infty}),$$ 
where all the completions in $H^1_\mathrm{Iw}(K[1]_\p,T)=\varprojlim_n H^1(K[p^n]_\p,T)$ are taken with respect to the primes induced by $\iota_p$ and $\hat{\mathcal{G}}_{p^\infty}=\Gal(K[p^\infty]_\p/K_\p)$, such that
\begin{equation}\label{L-funct}
u\langle \mathrm{loc}_\p (\mathbf{z}_{1,\alpha}\otimes \hat\psi^{-1}),\mathbf{w}\rangle = \mathscr{L}_{\p,\psi}(f)\ \in \hat{\mathcal{O}}_F^\mathrm{ur}\llbracket \mathcal{G}_{p^\infty}\rrbracket
\end{equation}
where the pairing is the one defined in \cite[\S 3.6.1]{PR}.
\subsection{Twisted representations}
For a $G_K$-module $B$ over a ring $R$ and a continuous character $\rho\colon G_K\rightarrow R$, denote by $B(\rho)$ the $G_K$-representation over $R$ given by
\[ B\otimes_{R}R_\rho \]
where $R_\rho$ is a free rank-1 $R$-module on which $G_K$ acts via $\rho$. Note that $B(\rho)$ and $B$ are isomorphic as $R$-modules but not as $G_K$-modules. 

Put $\Lambda=\cO_F\llbracket \Gamma_\infty\rrbracket$, and assume that $\hat\psi$ factors through $\Gamma_\infty$. 

Let $\Lambda^\mathrm{ur}$ be the Iwasawa algebra $\hat{\cO}_F^\mathrm{ur}\llbracket \Gamma_\infty\rrbracket$, and denote by $\mathscr{L}_{\p,\psi}^{\mathrm{ac}}$ the image of $\mathscr{L}_{\p,\psi}(f)$ under the natural projection $\hat{\cO}_F^\mathrm{ur}\llbracket \mathcal{G}_{p^\infty}\rrbracket\rightarrow \Lambda^\mathrm{ur}$. More generally, for any $\Lambda$-module $M$ we will use $M^\mathrm{ur}$ to denote $M\otimes_{\cO_F} \hat{\cO}_F^\mathrm{ur}$.

Let $\mathbf{h}$ be an element of $H^1_{\mathcal{F}_\Lambda}(K_\p,\mathbf{T})(\hat\psi^{-1})=H^1_{\mathcal{F}_\Lambda}(K_\p,\mathbf{T})\otimes \hat\psi^{-1}\vert_{G_{K_\p}}$. Then, $\res_{K[1]_\p/K_\p}(\mathbf{h})\in H^1_\mathrm{Iw}(K[1]_\p,\mathrm{Fil}_\p^+(T))(\hat\psi^{-1})$ and by \cite[Eq. 4.33]{Kob-Ota} and \cite[\S 7.1.1]{Magrone}
\[
\langle \res_{K[1]_\p/K_\p}(\mathbf{h}),\mathbf{w}\rangle\in\hat{\cO}_F^\mathrm{ur}\llbracket\mathcal{G}_{p^\infty}\rrbracket. 
\]
Define a homomorphism
\[
\varphi_{\mathbf{w}}\colon H^1_{\mathcal{F}_\Lambda}(K_\p,\mathbf{T})(\hat\psi^{-1})^\mathrm{ur}\longrightarrow \Lambda^\mathrm{ur}
\]
as the composite of $\mathbf{h}\mapsto \langle \res_{K[1]_\p/K_\p}(\mathbf{h}),\mathbf{w}\rangle$ with the natural projection $\hat{\cO}_F^\mathrm{ur}\llbracket\mathcal{G}_{p^\infty}\rrbracket\rightarrow \Lambda^\mathrm{ur}$. It is a $\Lambda$-linear map and

\begin{lemma}\label{varphi}
$\varphi_{\mathbf{w}}(\mathrm{loc}_\p(\kappa_1)\otimes \hat\psi^{-1})=c\cdot \mathscr{L}_{\p,\psi}^\mathrm{ac}$ for some $c\in \Lambda^\times$.
\end{lemma}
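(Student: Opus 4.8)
The plan is to unwind the definition of $\varphi_{\mathbf w}$, to rewrite $\mathrm{loc}_\p(\kappa_1)$ in terms of the big Heegner class $\mathbf z_{1,\alpha}$ by means of Lemma \ref{key-eq}, and then to feed the result into the explicit reciprocity law \eqref{L-funct}. Write $\mathrm{proj}\colon\hat{\cO}_F^{\mathrm{ur}}\llbracket\mathcal G_{p^\infty}\rrbracket\to\Lambda^{\mathrm{ur}}$ for the natural projection, whose restriction to $\cO_\p\llbracket\mathcal G_{p^\infty}\rrbracket$ is the map $\varphi$ appearing in Lemma \ref{key-eq}. By construction $\varphi_{\mathbf w}(\mathrm{loc}_\p(\kappa_1)\otimes\hat\psi^{-1})=\mathrm{proj}(\langle\res_{K[1]_\p/K_\p}(\mathrm{loc}_\p(\kappa_1))\otimes\hat\psi^{-1},\mathbf w\rangle)$, and $\mathscr L_{\p,\psi}^{\mathrm{ac}}=\mathrm{proj}(\mathscr L_{\p,\psi}(f))$.

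By Lemma \ref{key-eq}, $\kappa_1=\varphi(\boldsymbol z)$ with $\boldsymbol z=\cores_{K[1]/K}(\boldsymbol z_{1,\alpha})$, where $\boldsymbol z_{1,\alpha}$ is obtained from $\mathbf z_{1,\alpha}\in H^1_{\mathrm{Iw}}(K[p^\infty],T)$ by Shapiro's lemma based at $K[1]$ followed by the push-forward of coefficients along $\cO_\p\llbracket\Gal(K[p^\infty]/K[1])\rrbracket\hookrightarrow\cO_\p\llbracket\mathcal G_{p^\infty}\rrbracket$. By the transitivity of Shapiro's lemma, $\cores_{K[1]/K}$ applied after this push-forward is Shapiro's lemma based at $K$, so that $\boldsymbol z$ is just the image of $\mathbf z_{1,\alpha}$ under $H^1_{\mathrm{Iw}}(K[p^\infty],T)\xrightarrow{\sim}H^1(K,T\otimes\cO_\p\llbracket\mathcal G_{p^\infty}\rrbracket)$. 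Since $\mathrm{loc}_\p$ and the local restriction $\res_{K[1]_\p/K_\p}$ are compatible with Shapiro's lemma — upon identifying the decomposition group $\Gal(K[p^\infty]_\p/K_\p)$ with its image in $\mathcal G_{p^\infty}$ — and $\varphi$ is induced by a map of coefficient rings, $\res_{K[1]_\p/K_\p}(\mathrm{loc}_\p(\kappa_1))$ agrees, up to a unit of $\Lambda$, with the image of $\mathrm{loc}_\p(\mathbf z_{1,\alpha})$ under the coefficient projection to the anticyclotomic quotient; the unit discrepancy is a product of prime-to-$p$ indices attached to the behaviour of $\p$ in the ring class tower $K[p^\infty]/K$, notably $\#\Gal(K[1]/K)=h_K$, which are invertible in $\cO_F$ because $p\nmid h_K$. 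Substituting into the bilinear pairing against $\mathbf w$, applying $\mathrm{proj}$, and using \eqref{L-funct} in the form $\langle\mathrm{loc}_\p(\mathbf z_{1,\alpha}\otimes\hat\psi^{-1}),\mathbf w\rangle=u^{-1}\mathscr L_{\p,\psi}(f)$, we obtain $\varphi_{\mathbf w}(\mathrm{loc}_\p(\kappa_1)\otimes\hat\psi^{-1})=c\cdot\mathscr L_{\p,\psi}^{\mathrm{ac}}$, where $c$ is the product of the prime-to-$p$ index factor, $\mathrm{proj}(u)^{-1}$, and the constant comparing the two occurrences of the Perrin–Riou pairing of \cite[\S 3.6.1]{PR}. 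That $c$ lies in $\Lambda^\times$, and not merely in $(\Lambda^{\mathrm{ur}})^\times$, is part of the content of the reciprocity laws invoked.

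The main obstacle is the compatibility step. One must carefully match all the comparison maps involved — Shapiro's lemma at $K$, at $K[1]$ and their local analogues at $K_\p$ and $K[1]_\p$, the coefficient projections $\cO_\p\llbracket\mathcal G_{p^\infty}\rrbracket\to\Lambda$ and $\hat{\cO}_F^{\mathrm{ur}}\llbracket\mathcal G_{p^\infty}\rrbracket\to\Lambda^{\mathrm{ur}}$, the localization $\mathrm{loc}_\p$, the local restriction $\res_{K[1]_\p/K_\p}$, and the Perrin–Riou pairing with $\mathbf w$ — and check that the accumulated constant is a genuine unit, which amounts to controlling the decomposition of $\p$ along the ring class tower so that every corestriction and norm factor collapses to a constant prime to $p$. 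This bookkeeping has been carried out by Magrone \cite{Magrone} and by Kobayashi–Ota \cite{Kob-Ota}, and the argument here follows theirs.
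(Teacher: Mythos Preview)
Your approach is essentially the same as the paper's: the paper's proof is the single sentence ``This follows from the definitions, equation \eqref{L-funct} and Lemma \ref{key-eq},'' and you have simply unpacked those three ingredients in detail. The expansion is reasonable and the compatibility checks you describe are the right ones to verify.

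One remark: you spend some effort arguing that $c$ lies in $\Lambda^\times$ rather than merely $(\Lambda^{\mathrm{ur}})^\times$, invoking unspecified content of the reciprocity laws. Since $u\in\hat{\cO}_F^{\mathrm{ur}}\llbracket\mathcal G_{p^\infty}\rrbracket^\times$ and $\varphi_{\mathbf w}$ takes values in $\Lambda^{\mathrm{ur}}$, the natural conclusion is $c\in(\Lambda^{\mathrm{ur}})^\times$; for the application in the subsequent theorem (injectivity of a map between $\Lambda^{\mathrm{ur}}$-modules) this is all that is needed, so there is no need to press further. Also, the ``unit discrepancy'' you attribute to $h_K$ is not quite the right bookkeeping: the restriction--corestriction interaction here is between global and local Shapiro identifications, not a bare $\res\circ\cores$ on group cohomology, so no index factor of that shape actually appears.
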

\begin{proof}
This follows from the definitions, equation \eqref{L-funct} and Lemma \ref{key-eq}.
\end{proof}

We now consider the twisted representations $T(\hat\psi^{-1})$ and $A(\hat\psi^{-1})$. Then, we write
\[ \mathbf{T}(\hat\psi^{-1}):= \varprojlim \mathrm{Ind}_{K_m/K}(T(\hat\psi^{-1}))\simeq T(\hat\psi^{-1})\otimes \Lambda,\quad \mathbf{A}(\hat\psi^{-1}):=\varinjlim \mathrm{Ind}_{K_m/K}(A(\hat\psi^{-1})).\]

We define a Greenberg-type Selmer structure on $\mathbf{T}(\hat\psi^{-1})$ and $\mathbf{A}(\hat\psi^{-1})$ that we continue to denote by $\mathcal{F}_\Lambda$ abusing notation.
Observe that like $V$, also the twisted Galois representation $V(\hat\psi^{-1})$ satisfies the Panchishkin condition (see \cite[Definition 7.2]{Loe-Zer}). More precisely, since the restriction of $V(\hat\psi^{-1})$ to $G_{K_\p}$ has Hodge-Tate weights $k$ and $1$, the Panchishkin subrepresentation $\mathrm{Fil}_\p^+(V(\hat\psi^{-1}))$ of $V(\hat\psi^{-1})$ at $\p$ is the whole $V(\hat\psi^{-1})$. On the other hand, the restriction of $V(\hat\psi^{-1})$ to $G_{K_{\bar\p}}$ has Hodge-Tate weights $0$ and $1-k$ and hence $\mathrm{Fil}_{\bar\p}^+(V(\hat\psi^{-1}))=0$. Therefore $\mathrm{Fil}_\p^+(T(\hat\psi^{-1}))=\mathrm{Fil}_\p^+(V(\hat\psi^{-1}))\cap T(\hat\psi^{-1})=T(\hat\psi^{-1})$ and $\mathrm{Fil}_\p^+(A(\hat\psi^{-1}))=\mathrm{Fil}_\p^+(V(\hat\psi^{-1}))/\mathrm{Fil}_\p^+(T(\hat\psi^{-1}))=A(\hat\psi^{-1})$.
\begin{remark}
The pairing \eqref{pairing2} induces a pairing
\[ (\ ,\ )_\psi\colon T(\hat\psi^{-1})\times A(\hat\psi^{-1})\longrightarrow \mu_{p^\infty},\]
but $\mathrm{Fil}_v^+(T(\hat\psi^{-1}))$ and $\mathrm{Fil}_v^+(A(\hat\psi^{-1}))$ are not orthogonal to each other with respect to this pairing as it happens in the non-twisted setting.
\end{remark}

As we did in section \ref{Greenberg Lamb-adic}, for $v\mid p$ we set $\mathrm{Fil}^\pm_v(\mathbf{T}(\hat\psi^{-1})):=(\mathrm{Fil}^\pm_v(T(\hat\psi^{-1}))\otimes \Lambda$ and $\mathrm{Fil}^\pm_v(\mathbf{A}(\hat\psi^{-1})):=\varinjlim\mathrm{Ind}_{K_m/K}(\mathrm{Fil}^\pm_v(A(\hat\psi^{-1}))$. 

Let $M$ be $\mathbf{T}(\hat\psi^{-1})$ or $\mathbf{A}(\hat\psi^{-1})$. The Selmer structure $\mathcal{F}_\Lambda$ on $M$ is given by taking the unramified subgroup $H^1_\mathrm{ur}(K_v,M)$ at the primes $v\nmid p$, and $H^1_{\mathcal{F}_\Lambda}(K_v,M)=\mathrm{Im}(H^1(K_v,\mathrm{Fil}^+_v(M)\rightarrow H^1(K_v,M))$ at $v=\p$ and $v=\bar\p$.

We now introduce a more explicite notation: we write $H^1_{\mathcal{F},\mathcal{G}}(K,M)$ for the Selmer group associated with the local conditions $H^1_\mathcal{F}(K_\p,M)$, $H^1_\mathcal{G}(K_{\bar\p},M)$ and $H^1_{\mathrm{ur}}(K_v,M)$ for every prime $v$ of $K$ not dividing $p$. If in particular $H^1_\mathcal{F}(K_\p,M)=H^1(K_\p,M)$ we write $H^1_{\emptyset,\mathcal{G}}(K,M)$ for the associated Selmer group (note that in this case we haven't put any condition on the localization at $\p$), and if $H^1_\mathcal{G}(K_{\bar\p},M)=\{0\}$ we denote the associated Selmer group by $H^1_{\mathcal{F},0}(K,M)$.

By definition, we have 
\[ H^1_{\mathcal{F}_\Lambda}(K,\mathbf{T}(\hat\psi^{-1}))=H^1_{\emptyset,0}(K,\mathbf{T}(\hat\psi^{-1}))=H^1_{\emptyset,0}(K,\mathbf{T})(\hat\psi^{-1}), \]
and the same if we replace $\mathbf{T}(\hat\psi^{-1})$ by $\mathbf{A}(\hat\psi^{-1})$. Moreover as observed in \cite[Remark 2.6]{Kob-Ota}
\[
H^1_{\emptyset,0}(K, \mathbf{A}(\hat\psi^{-1}))=H^1_\mathrm{BK}(K_\infty, A(\hat\psi^{-1})).\]

\begin{theorem}
The $\Lambda$-module $X_\psi=H^1_{\emptyset,0}(K,\mathbf{A}(\psi^{-1}))^\vee$ is torsion, and the following inclusion of ideals of $\Lambda^\mathrm{ur}$ holds:
\[ (\mathscr{L}_{\p,\psi}^\mathrm{ac})^2\subseteq \mathrm{char}(X_\psi)^\mathrm{ur}. \]
\end{theorem}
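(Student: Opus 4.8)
The plan is to deduce both assertions from Theorem~\ref{main} — the free-rank-one statement for $H^1_{\mathcal{F}_\Lambda}(K,\mathbf{T})$ and the pseudo-isomorphism $\mathcal{X}_\infty\sim\Lambda\oplus M_\infty\oplus M_\infty$ — together with the identity of Lemma~\ref{varphi} linking the bottom class $\kappa_1$ of the $\Lambda$-adic Kolyvagin system to $\mathscr{L}_{\p,\psi}(f)$. One works throughout with the twisted representation: as recorded before Lemma~\ref{varphi}, $V(\hat\psi^{-1})$ satisfies the Panchishkin condition with $\mathrm{Fil}_\p^+(\mathbf{T}(\hat\psi^{-1}))=\mathbf{T}(\hat\psi^{-1})$ and $\mathrm{Fil}_{\bar\p}^+(\mathbf{T}(\hat\psi^{-1}))=0$, so the Greenberg structure on the twist is the relaxed-at-$\p$, strict-at-$\bar\p$ structure and $X_\psi=H^1_{\emptyset,0}(K,\mathbf{A}(\hat\psi^{-1}))^\vee$. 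As a harmless first reduction, if $\mathscr{L}_{\p,\psi}^{\mathrm{ac}}=0$ the asserted inclusion is vacuous, so one may assume $\mathscr{L}_{\p,\psi}^{\mathrm{ac}}\neq 0$; by Lemma~\ref{varphi} this forces $\mathrm{loc}_\p(\kappa_1)$ to be non-torsion, which is the non-degeneracy one needs below.

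From Theorem~\ref{structure}(1), Theorem~\ref{rank1} and the identification \eqref{comparison T}, the module $H^1_{\mathcal{F}_\Lambda}(K,\mathbf{T})$ is free of rank one over $\Lambda$ with basis some $g$, and $\kappa_1=\delta g$ with $(\delta)=\mathrm{char}_\Lambda\bigl(H^1_{\mathcal{F}_\Lambda}(K,\mathbf{T})/\Lambda\kappa_1\bigr)$; by Theorem~\ref{main}(2), $\mathrm{char}_\Lambda(M_\infty)$ divides $(\delta)$ and $\mathrm{char}_\Lambda(M_\infty)=\mathrm{char}_\Lambda(M_\infty)^\iota$. Moreover $\mathrm{loc}_\p\colon H^1_{\mathcal{F}_\Lambda}(K,\mathbf{T})\to H^1(K_\p,\mathrm{Fil}_\p^+(\mathbf{T}))$ is injective, its kernel being the strict-at-$\p$ Selmer group, which is torsion free of $\Lambda$-rank $0$.

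The heart of the proof is then a Poitou--Tate/global-duality comparison, as in \cite{CH} and \cite{Kob-Ota}, of the Selmer groups obtained from $\mathcal{F}_\Lambda$ by relaxing the local condition at $\p$ and by making it strict at $\bar\p$, carried out for both $\mathbf{A}$ and $\mathbf{A}(\hat\psi^{-1})$. This comparison yields that $X_\psi$ is $\Lambda$-torsion and that $\mathrm{char}_\Lambda(X_\psi)$ divides $\mathrm{Tw}_{\hat\psi^{-1}}\!\bigl(\mathrm{char}_\Lambda(M_\infty)\bigr)^2$, where $\mathrm{Tw}_{\hat\psi^{-1}}$ is the automorphism of $\Lambda$ induced by twisting by $\hat\psi^{-1}$ and one uses $\mathrm{char}(M_\infty)=\mathrm{char}(M_\infty)^\iota$; the square is the shadow on the $X_\psi$-side of the two copies of $M_\infty$ in $\mathcal{X}_\infty\sim\Lambda\oplus M_\infty\oplus M_\infty$, which themselves come from the self-duality of $T$ and the inversion action of complex conjugation on $\Gamma_\infty$. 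Granting this, one concludes with Lemma~\ref{varphi}: the $\Lambda$-linear map $\varphi_{\mathbf{w}}$ sends $\mathrm{loc}_\p(\kappa_1)\otimes\hat\psi^{-1}$ to $c\cdot\mathscr{L}_{\p,\psi}^{\mathrm{ac}}$ with $c\in\Lambda^\times$, so writing $\kappa_1=\delta g$ and using $\Lambda$-linearity, $\mathrm{Tw}_{\hat\psi^{-1}}(\delta)$ divides $\mathscr{L}_{\p,\psi}^{\mathrm{ac}}$ in $\Lambda^{\mathrm{ur}}$; since $\mathrm{char}(M_\infty)\mid(\delta)$, also $\mathrm{Tw}_{\hat\psi^{-1}}(\mathrm{char}(M_\infty))\mid\mathscr{L}_{\p,\psi}^{\mathrm{ac}}$, and squaring together with the previous divisibility yields $\mathrm{char}_\Lambda(X_\psi)\mid(\mathscr{L}_{\p,\psi}^{\mathrm{ac}})^2$, i.e. $(\mathscr{L}_{\p,\psi}^{\mathrm{ac}})^2\subseteq\mathrm{char}_\Lambda(X_\psi)^{\mathrm{ur}}$ (all after the evident flat base change from $\cO_\p$ to $\cO_F$).

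The main obstacle I expect is the global-duality bookkeeping of the previous paragraph. One must verify that the local error terms at $\p$ and $\bar\p$ occurring in the Poitou--Tate sequences — cohomology of $\mathrm{Fil}^{\pm}_v$ for $v\mid p$, the doubly-strict global term $H^1_{0,0}(K,\mathbf{T}(\hat\psi^{-1}))$, and the relevant $H^2$'s — are $\Lambda$-free of the expected rank or pseudo-null, so that they contribute no extra characteristic-ideal factor and in particular do not obstruct the torsionness of $X_\psi$, and that the twisting automorphism $\mathrm{Tw}_{\hat\psi^{-1}}$ and the involution $\iota$ interact so that the torsion part of $\mathcal{X}_\infty$ transfers to $X_\psi$ as a genuine square rather than merely as a product of two a priori unrelated modules. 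By contrast, the $p$-adic analytic input — existence of Perrin--Riou's big logarithm and the explicit reciprocity law \eqref{L-funct} — is already packaged in Lemma~\ref{varphi} and only needs to be quoted.
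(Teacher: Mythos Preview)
Your outline has a genuine gap in the Poitou--Tate step, and it is exactly the point you flag as ``the main obstacle''. The claimed divisibility $\mathrm{char}_\Lambda(X_\psi)\mid \mathrm{Tw}_{\hat\psi^{-1}}\bigl(\mathrm{char}_\Lambda(M_\infty)\bigr)^2$ is in the wrong direction. What global duality (as in \cite[Lemma 5.3]{Kob-Ota}) actually gives, after twisting, is the \emph{equality}
\[
\mathrm{char}_\Lambda(X_\psi)=\mathrm{char}_\Lambda\bigl(H^1_{\mathcal{F}_\Lambda}(K,\mathbf{A})^\vee_{\mathrm{tors}}(\hat\psi^{-1})\bigr)\cdot \mathrm{char}_\Lambda\bigl(\mathrm{coker}(\mathrm{loc}_\p)\bigr)^2,
\]
so $\mathrm{Tw}_{\hat\psi^{-1}}(\mathrm{char}(M_\infty))^2$ divides $\mathrm{char}(X_\psi)$, not conversely. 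The local error term $\mathrm{coker}(\mathrm{loc}_\p)$ (the cokernel of $H^1_{\mathcal{F}_\Lambda}(K,\mathbf{T})\to \tilde H^1_{\mathcal{F}_\Lambda}(K_\p,\mathbf{T})$) is a genuine torsion $\Lambda$-module with no reason to be pseudo-null; your hope that these terms ``contribute no extra characteristic-ideal factor'' is simply not true in general. With the divisibility reversed, your chain $\mathrm{char}(X_\psi)\mid \mathrm{Tw}(\mathrm{char}(M_\infty))^2\mid (\mathscr{L}^{\mathrm{ac}}_{\p,\psi})^2$ collapses: from $\mathrm{Tw}(\mathrm{char}(M_\infty))^2\mid \mathrm{char}(X_\psi)$ and $\mathrm{Tw}(\mathrm{char}(M_\infty))^2\mid (\mathscr{L}^{\mathrm{ac}}_{\p,\psi})^2$ one can conclude nothing about the relation between $\mathrm{char}(X_\psi)$ and $(\mathscr{L}^{\mathrm{ac}}_{\p,\psi})^2$.

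The fix in the paper is not to suppress $\mathrm{coker}(\mathrm{loc}_\p)$ but to track it on \emph{both} sides. On the analytic side, one does not merely use $\mathrm{Tw}(\delta)\mid \mathscr{L}^{\mathrm{ac}}_{\p,\psi}$; instead one factors $\varphi_{\mathbf{w}}\circ\mathrm{loc}_\p$ through the rank-one local module $\tilde H^1_{\mathcal{F}_\Lambda}(K_\p,\mathbf{T})(\hat\psi^{-1})$ to get
\[
(\mathscr{L}^{\mathrm{ac}}_{\p,\psi})\subseteq \mathrm{char}\!\left(\frac{H^1_{\mathcal{F}_\Lambda}(K,\mathbf{T})(\hat\psi^{-1})}{\Lambda(\kappa_1\otimes\hat\psi^{-1})}\right)\cdot \mathrm{char}\bigl(\mathrm{coker}(\mathrm{loc}_\p)\bigr).
\]
Squaring this and combining with Theorem~\ref{structure}(3) and the displayed equality for $\mathrm{char}(X_\psi)$ above, the factor $\mathrm{char}(\mathrm{coker}(\mathrm{loc}_\p))^2$ appears on both sides and is absorbed, yielding $(\mathscr{L}^{\mathrm{ac}}_{\p,\psi})^2\subseteq \mathrm{char}(X_\psi)^{\mathrm{ur}}$. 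In short: the error term you wanted to discard is precisely the term that makes the two halves of the argument fit together.
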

\begin{proof}
Since $\mathscr{L}_{\p,\psi}^\mathrm{ac}$ is non-zero in $\Lambda^\mathrm{ur}$, $\mathrm{loc}_\p(\kappa_1)\otimes\hat\psi^{-1}$ is not a torsion element. By our main theorem, $H^1_{\mathcal{F}_\Lambda}(K,\mathbf{T})$ is free of rank $1$, and hence $H^1_{\mathcal{F}_\Lambda}(K,\mathbf{T})(\hat\psi^{-1})$ is so. Denote by $\tilde {H}^1_{\mathcal{F}_\Lambda}(K_\p,\mathbf{T})(\hat\psi^{-1})$ the free rank-$1$ $\Lambda$-module $H^1_{\mathcal{F}_\Lambda}(K_\p,\mathbf{T})(\hat\psi^{-1})/H^1_{\mathcal{F}_\Lambda}(K_\p,\mathbf{T})_\mathrm{tors}(\hat\psi^{-1})$, then the localization map
\[ \mathrm{loc}{_\p}\colon H^1_{\mathcal{F}_\Lambda}(K,\mathbf{T})(\hat\psi^{-1}) \longrightarrow \tilde H^1_{\mathcal{F}_\Lambda}(K_\p,\mathbf{T})(\hat\psi^{-1}) \]
is a non-zero map between free modules of rank $1$. In particular, it is injective and it induces an injective map on the quotients
\[ \frac{H^1_{\mathcal{F}_\Lambda}(K,\mathbf{T})(\hat\psi^{-1})}{\Lambda\cdot (\kappa_1\otimes \hat\psi^{-1})}\longmono \frac{\tilde H^1_{\mathcal{F}_\Lambda}(K_\p,\mathbf{T})(\hat\psi^{-1})}{\Lambda\cdot (\mathrm{loc}_\p(\kappa_1)\otimes\hat\psi^{-1})}, \]
whose cokernel is the torsion $\Lambda$-module $\mathrm{coker}(\mathrm{loc}_\p)$.

By Lemma \ref{varphi}, the map induced by $\varphi_\mathbf{w}$
\[ \frac{\tilde H^1_{\mathcal{F}_\Lambda}(K_\p,\mathbf{T})(\hat\psi^{-1})^\mathrm{ur}}{\Lambda^\mathrm{ur}\cdot (\mathrm{loc}_\p(\kappa_1)\otimes\hat\psi^{-1})} \longrightarrow \frac{\Lambda^\mathrm{ur}}{\Lambda^\mathrm{ur}\cdot \mathscr{L}_{\p,\psi}^\mathrm{ac}} \]
is injective. It follows that
\begin{equation}\label{1} 
(\mathscr{L}_{\p,\psi}^\mathrm{ac})\subseteq\mathrm{char}\left(\frac{\Lambda^\mathrm{ur}}{\mathscr{L}_{\p,\psi}^\mathrm{ac}\cdot\Lambda^\mathrm{ur}}\right)\subseteq \mathrm{char}\left(\frac{H^1_{\mathcal{F}_\Lambda}(K,\mathbf{T})(\hat\psi^{-1})^\mathrm{ur}}{\Lambda^\mathrm{ur}\cdot (\kappa_1\otimes\hat\psi^{-1})}\right)\cdot\mathrm{char}(\mathrm{coker}(\mathrm{loc}_\p)^\mathrm{ur}).
\end{equation}
By twisting Theorem \ref{structure}(3), we have
\begin{equation}\label{2}
\mathrm{char}\left(\left(\frac{H^1_{\mathcal{F}_\Lambda}(K,\mathbf{T})(\hat\psi^{-1})}{\Lambda\cdot (\kappa_1\otimes\hat\psi^{-1})}\right)^2\right)  \subseteq \mathrm{char}((H^1_{\mathcal{F}_\Lambda}(K,\mathbf{A})_\mathrm{tors}^\vee)(\hat\psi^{-1}))
\end{equation}

By \cite[Lemma 5.3]{Kob-Ota} and the short exact sequences coming from global duality that appear in its proof, after twisting by $\hat\psi^{-1}$, one has $X_\psi=H^1_{\emptyset,0}(K,\mathbf{A})^\vee(\hat\psi^{-1})$ is torsion and 
\[ \mathrm{char}(X_\psi)=\mathrm{char}(H^1_{\mathcal{F}_\Lambda}(K,\mathbf{A})_{\mathrm{tors}}^\vee(\hat\psi^{-1})\cdot \mathrm{char}(\mathrm{coker}(\mathrm{loc}_\p)^2).\]
Hence, from \eqref{1} and \eqref{2} we get
\[ (\mathscr{L}_{\p,\psi}^\mathrm{ac})^2\subseteq \mathrm{char}(X_\psi)^\mathrm{ur}\]
as claimed.
\end{proof}

\bibliographystyle{amsplain}
\bibliography{Biblio}

\end{document}